\newcounter{mysubsection}[section]%
\newcounter{mysubsubsection}[mysubsection]%
\newtheorem{corollary}[mysubsection]{Corollary}%
\newtheorem{lemma}[mysubsection]{Lemma}%
\newtheorem{proposition}[mysubsection]{Proposition}%
\newtheorem{theorem}[mysubsection]{Theorem}%
\newtheorem{definition}[mysubsection]{Definition}%
\newtheorem{example}[mysubsection]{Example}%
\newtheorem{remark}[mysubsection]{Remark}%
\def\qed{{\unskip\nobreak\hfil\penalty50%
  \hskip2em\hbox{}\nobreak\hfil$\square$%
  \parfillskip=0pt\finalhyphendemerits=0\par}}
\newenvironment{proof}%
  {\par\addvspace{\medskipamount}%
    \upshape%
    {\slshape\scshape
    Proof\hskip\labelsep}}%
  {\qed%
    \addvspace{\medskipamount}}%
\newcommand\dlim{\mathop{\oalign{\hfil$\textrm{lim}$\hfil\cr$\longrightarrow$\cr}}}
\newcommand\dom{\textrm{dom}}
\newcommand\Gr{\mathcal{G}\textit{r}}
\newcommand\Hom{\textrm{Hom}}
\newcommand\Ima{\textrm{Im}}
\newcommand\id{\textrm{id}}
\newcommand\Inf{\textrm{Inf}}
\newcommand\Ker{\textrm{Ker}}
\newcommand\rMod{\textrm{Mod}-}
\newcommand\Max{\textrm{Max}}
\newcommand\Min{\textrm{Min}}
\newcommand\sepa{\vspace*{-3mm}\setlength{\itemsep}{-2mm}}
\newcommand\Set{\mathcal{S}\textit{et}}
\newcommand\Sup{\textrm{Sup}}
\newcommand\blfootnote[1]{%
  \begingroup
  \renewcommand\thefootnote{}\footnote{#1}%
  \addtocounter{footnote}{-1}%
  \endgroup}
\begin{document}

\title{Gradual and fuzzy subsets\blfootnote{\today
\newline
J. M. Garc\'{\i}a (\emph{jgarciah@ugr.es})
    Department of Applied Mathematics. University of Granada. E--18071 Granada. Spain.
\newline
P. Jara (Corresponding author) (\emph{pjara@ugr.es})
    Department of Algebra and IEMath--GR (Instituto de Matem\'{a}ticas). University of Granada. E--18071 Granada. Spain.
\newline
2020 \emph{Mathematics Subject Classification:} 03E72, 08A72, 16Y80, 20N25
\newline
\emph{Key words:} fuzzy set, gradual element, gradual set, gradual group, functorial category.}}
\author{Josefa M. Garc\'{\i}a\\Pascual Jara}
\date{}
\maketitle

\begin{abstract}
In the fuzzy theory of sets and groups, the use of $\alpha$--levels is a standard to translate problems from the fuzzy to the crisp framework. Using strong $\alpha$--levels, it is possible to establish a one to one correspondence which makes possible doubly, a gradual and a functorial treatment of the fuzzy theory. The main result of this paper is to identify the class of fuzzy sets, respectively fuzzy groups, with subcategories of the functorial categories $\mathcal{S}\textit{et}^{(0,1]}$, resp. $\mathcal{G}\textit{r}^{(0,1]}$. In this line, the algebraic potential of this theory will be reached, in forthcoming papers, in the study of fuzzy modules, since, in that case, the functorial category is a well founded subcategory of a Grothendieck abelian category.
\end{abstract}

\section*{Introduction}

Let $X$ be a set, every subset $S\subseteq{X}$ is defined by its characteristic function $\chi_S:X\longrightarrow\{0,1\}$, which is defined by
$\chi_S(x)=\left\{\begin{array}{ll}
1&\textrm{ if }x\in{S},\\
0&\textrm{ if }x\notin{S}.
\end{array}\right.$
Thus, the concept of membership is exclusive. However, we can consider, in a wider environment, different degrees of membership:
1 means that the element belongs to the subset;
0 that it does not belong and
any other real number $0<\alpha<1$, would mean a different degree of membership.

The theory, in these terms, is due to Zadeh, see \cite{Zadeh:1965}, who introduces a fuzzy subset of a given set $X$ as a map $\mu:X\longrightarrow[0,1]$. From this primitive concept we can develop a whole theory of sets, relations, maps, numbers, etc.

In this approach to the fuzzy theory, we are begin by relating various mathematical theories; this relationship is evident in the crisp framework, but which in the fuzzy theory presents, so far, some difficulties.

Our approach to the fuzzy concept starts from the definition of a fuzzy element: we adopt the definition given by Duboi and Prade of a gradual element, see \cite{Dubois/Prade:2008}. So that a gradual element of a set $X$ is given by a collection of elements, each with a degree of membership, ranging in $(0,1]$: there is always an element of the set $X$ that has a degree of membership 1, and, possibly, other elements with other membership values, but never 0, that is, we do not determine any element of $X$ that has zero degree of membership.
This notion of gradual element has been extended to study several problems, see \cite{Sanchez/Delgado/Vila/Chamorro:2012}, \cite{Martin:2015} and \cite{Martin/Azvine:2013}. Nevertheless, we have preferred to maintain the former one as when applying to sets, groups, and other structures, it defines canonically a ground set, group, etc, which is an ambient object suitable for working.

For a greater flexibility in the definition we assume that not all possible degrees are reached, so a gradual element is given by a partial mapping from $(0,1]$ to $X$: we will call it a partial gradual element. If $\varepsilon$ is a partial gradual element with definition domain $\dom(\varepsilon)\subseteq(0,1]$, for its study we need to relate partial gradual elements to each other. The problem that arises is: when two gradual elements $\varepsilon_1$ and $\varepsilon_2$ are equal? It is clear that we can only compare $\varepsilon_1$ and $\varepsilon_2$ where they are defined, that is, in $\dom(\varepsilon_1)\cap\dom(\varepsilon_2)$.

This definition of equality is too weak. In fact we are more interested in knowing if $\varepsilon_1$ and $\varepsilon_2 $ take equal values in a range $[\alpha,1]$, for some $\alpha\in(0,1]$. Taking into account that, whenever $\beta\in(0,1]$ is very small, it  is not relevant at all if that $\varepsilon_1(\beta)$ and $\varepsilon_2(\beta)$ are the same or different; we are more interested in knowing whenever $\varepsilon_1$ and $\varepsilon_2$ coincide for values of $\beta$ close to 1.

Thus, we extend the equality relation to the case, previously indicated, of values in an interval $[\alpha,1]$. In this way a relationship is obtained: $\varepsilon_1{R_\alpha}\varepsilon_2$ if
$$
{\varepsilon_1}_{|[\alpha,1]\cap\dom(\varepsilon_1)\cap\dom(\varepsilon_2)}
={\varepsilon_2}_{|[\alpha,1]\cap\dom(\varepsilon_1)\cap\dom(\varepsilon_2)}.
$$
But this relation is not necessarily an equivalence relation, because it depends heavily on $\dom(\varepsilon_1)\cap\dom(\varepsilon_2)$. So, if we want to compare partial gradual elements, we must standardize the definition domain.  In other words, we must, for instance, extend $\dom(\varepsilon)$ to the whole $(0,1]$.

There is a standard method of doing this, consisting of, given $\alpha\in\dom(\varepsilon)$ such that $\varepsilon$ is not defined in $(\beta,\alpha)$, defining $\varepsilon(\gamma)=\varepsilon(\alpha)$ for all $\gamma\in(\beta,\alpha)$. The condition, that has seemed most efficient forces to restrict the partial gradual elements to those whose definition domain verifies that for every $\alpha\in(0,1]$ there is a minimum $\zeta$ of $[\alpha,1]\cap\dom(\varepsilon)$, to, in this way, extend $\varepsilon$ to all $(0,1]$, defining $\varepsilon(\alpha)=\varepsilon(\zeta)$. We have called inf--compact the subsets of $(0,1]$ {containing 1,} and verifying this property. In this way, every partial gradual element $\varepsilon$, with inf--compact definition domain, can be extended, in a unique way, to a gradual element $\overline{\varepsilon}$ with definition domain $(0,1]$. We call $\overline{\varepsilon}$ the extended gradual element of $\varepsilon$.

We define a total gradual element as {a map} $\varepsilon:(0,1]\longrightarrow{X}$, among which we have the extended of the partial gradual elements; and denote by $\mathcal{X}$ the set of all total gradual elements of $X$. Observe that, when working with total gradual elements the relation $R_\alpha$ is an equivalence relation.

The next step of complexity is to consider a binary operation $*$ in the set $X$, and extend it to gradual elements. The standard method is to define $(\varepsilon_1*\varepsilon_2)(\alpha)=\varepsilon_1(\alpha)*\varepsilon_2(\alpha)$ for any $\alpha\in(0,1]$.

We have that if $(X,*)$ has a more complex algebraic structure, for example, if it is a group{, a semilattice, etc,} the set $\mathcal{X}$ of all gradual elements can have the same property. However, this has not been the line we followed for the study of fuzzy structures in a set $X$, the reason is that when considering, for example, a ring structure in $X$, although $\mathcal{X}$ has a ring structure, this is of little interest, since it has too many zero--divisor elements.

We have chosen, therefore to consider a greater degree of abstraction, and consider, given a group $(G,*)$, {not the set of elements of $G$, but }the set $\mathcal{S}(G)$ of all the subgroups of $G$. We have an inclusion $\mathcal{S}(G)\subseteq\mathcal{P}(G)$, in the powerset of $G$, and the elements of $\mathcal{S}(G)$ are the non-empty subsets $S\in\mathcal{P}(G)$ verifying: $S*S\subseteq{S}$ and $S^{-1}\subseteq{S}$. When considering gradual elements $\sigma,\sigma_1,\sigma_2$ of $\mathcal{P}(G)\setminus\{\varnothing\}$, we have new gradual elements: $\sigma_1*\sigma_2$ and $\sigma^{-1}$, and naturally the notions of gradual subgroup and gradual subset appear.

A gradual subset of a set $X$ is a gradual element $\sigma$ of $\mathcal{P}(X)$, and a gradual subgroup of a group $G$ is a gradual element $\sigma$ of $\mathcal{P}(X)\setminus\{\varnothing\}$ which is a subgroup, i.e, it will be a gradual element of $\mathcal{S}(G)$. Observe that in these situations we have solved the problem of extending partial gradual subsets or subgroups, because we can define the image of any element in $(0,1]\setminus\dom(\sigma)$ equals either $\varnothing$, for subsets, or $\{e\}$, for subgroups. Therefore, in section 2 and 3 we shall consider only total gradual subsets and subgroups.

This study will lead us to relate {gradual subgroups with fuzzy subgroups, gradual groups with fuzzy groups}, and the same process will allow to relate other structures: rings, modules, etc.

Before carrying out this work we have considered necessary to implement an in-depth study that relates gradual and fuzzy {sets and} subsets.

In the set $\mathcal{X}$ of the gradual subsets of $X$ we define a closure operator $\sigma\mapsto\sigma^c=\cup\{\sigma(\beta)\mid\;\beta\geq\alpha\}$. A gradual subset $\sigma$ will be a decreasing gradual subset if $\sigma=\sigma^c$. {And in the set $\mathcal{J}(X)$ of all decreasing gradual subsets of $X$} we define an interior operator  $\sigma\mapsto\sigma^d=\cup\{\sigma(\beta)\mid\;\beta>\alpha\}$, a decreasing gradual subset $\sigma$ will be a strict decreasing gradual subset whenever $\sigma=\sigma^d$.

Associated to any fuzzy subset $\mu$ of $X$ we have a decreasing gradual subset $\sigma(\mu)$, defined $\sigma(\mu)(\alpha)=\mu_\alpha$, the $\alpha$--level of $\mu$, for any $\alpha\in(0,1]$, and a strict decreasing gradual subset $\widetilde{\sigma}(\mu)=\sigma(\mu)^d$, which is the strong $\alpha$--level, or strong $\alpha$-cut, of $\mu$. The map $\mu\mapsto\sigma(\mu)$ does not preserve unions of infinite families, and the map $\mu\mapsto\widetilde{\sigma}(\mu)$ does not preserve infinite intersections; hence after modifying the intersection, we establish an injective correspondence, {preserving union and intersection}, from fuzzy subsets to strict decreasing gradual subsets, and find conditions on strict decreasing gradual subsets to be in the image of this map; that condition is property (inf--F). Which is important, in this situation, is that we have an isomorphism, {for intersections and unions}, between fuzzy subsets and strict decreasing gradual subsets satisfying property (inf--F). As a consequence properties on fuzzy subsets can be studied via strict decreasing gradual subsets.

In addition, we consider a generalization of the theory of gradual subsets through the use of {contravariant} functors from the category $(0,1]$ to the category $\Set$ of sets which allow a functorial framework of both theories of gradual and fuzzy subsets.

This theory, {first} developed in a context of sets, can be carried out to the more algebraic framework of groups, in which we may establish also a bijection between {fuzzy subgroups} and a specific class of gradual subgroups and {contravariant} functors. In particular, this bijection will allow a functorial treatment of fuzzy groups.

\medskip

This paper is organized in three sections. In the first one we study and establish the general theory of gradual elements and introduce binary operations in the set of all gradual elements defined from binary operation in the ground set $X$. In particular, if we start from a group $G$, we get a structure of group {in the set} of gradual elements. Not in all cases this structure reflects the properties of $G$ and its elements.

For this reason, to make an algebraic development later, in the second section we study gradual subsets and operators in the set of gradual subsets that will allow to establish a close relationship, an isomorphism, between the set of fuzzy subsets and a set of gradual subsets. This study ends in Theorem~\eqref{th:20180417} in which an isomorphism is established; observe that to obtain the isomorphism we had to make use of the strict decreasing gradual subsets. To do that, first we consider binary operations in $\mathcal{P}(X)$, the power set of $X$: the standard ones are the meet (intersection) and the join (union), and translate them to gradual subsets, which are noting more than gradual elements of $\mathcal{P}(X)$.
In this section we also identify a new type of objects through the use of {contravariant} functors from the category $(0,1]$ to the category of sets. These contravariant functors, which are identified with directed systems, generalize gradual subsets and fuzzy subsets, and allow a functorial framework of these two examples, {which} will provide a tool capable of dealing with other types of gradual and fuzzy objects such as groups, rings, etc., and that will allow to work, by using direct limits, with gradual and fuzzy sets, instead of with gradual and fuzzy subsets.

The third section is devoted to study the more complex example of gradual groups. After studying the different concepts related to group theory, we establish the most important result, Theorem~\eqref{th:201804b}, showing a bijection between equivalence classes of fuzzy subgroups and some specific strict gradual subgroups. This gradual subgroups appear in a natural way after studying two operator on gradual subgroups, one a closure operator and another one an interior operator in the class of all decreasing gradual groups. The formulation of the theory in terms of operators allows to develop a more abstract framework, in this case a functorial one, and hence, obtain new properties and relationships between known objects.

\nocite{Fortin/Dubois/Fargier:2008}
\nocite{Lewis/Martin:2013}
\nocite{Martin/Azvine:2013}
\nocite{Martin:2015}
\nocite{MORA-CAMINO/NUNEZ:2018}
\nocite{Nadaban:2016}
\nocite{Piegat/Plucinski:2015}
\nocite{Sanchez/Delgado/Vila/Chamorro:2012}
\nocite{Wang/Li:2017}

\section{Gradual elements}

\subsection{Gradual elements}

\begin{definition}
Let $X$ be {a} set, a \textbf{{total} gradual element} of $X$, is a map $\varepsilon:(0,1]\longrightarrow{X}$, and a \textbf{partial gradual element} of $X$ is a map $\varepsilon:L\subseteq(0,1]\longrightarrow{C}$, defined on a subset $L\subseteq(0,1]$ such that $1\in{L}$.
For simplicity, depending of the context, we use \textbf{gradual element} to refer either to a total gradual element or to a partial gradual element.
\end{definition}

For any partial gradual element $\varepsilon:L\subseteq(0,1]\longrightarrow{X}$ we call $L$ the \textbf{definition domain} of $\varepsilon$, and represent it by $\dom(\varepsilon)$.
We represent by $\mathcal{X}$ the set of all total gradual elements of $X$, and by $\underline{\mathcal{X}}$ the set of all partial gradual elements.

A gradual element $\varepsilon'$ is an \textbf{extension} of the gradual element $\varepsilon$ if $\varepsilon'_{|\dom(\varepsilon)}=\varepsilon$.

There is a particularly useful method of extending a partial gradual element $\varepsilon$ to a {total} gradual one, this is the case in which for any $\alpha\in(0,1]$ there exists $\Min([\alpha,1]\cap\dom(\varepsilon))$; then we define a new gradual element $\overline{\varepsilon}$ as follows:
\[
\overline{\varepsilon}(\alpha)=\varepsilon(\zeta),\textrm{ being }\zeta=\Min([\alpha,1]\cap\dom(\varepsilon)).
\]

See Example~\eqref{ex:20181210} in which examples of extensions of partial gradual elements appear. Another example is provided whenever we consider the partial gradual element $\varepsilon:\{1\}\longrightarrow\{a,b\}$, defined by $\varepsilon(1)=a$. In this case an extension $\overline{\varepsilon}:(0,1]\longrightarrow\{a,b\}$ is defined by $\overline{\varepsilon}(\alpha)=a$ for any $\alpha\in(0,1]$, the constant map equals to $a$.

A subset $L\subseteq(0,1]$, {containing 1}, such that there exists $\Min([\alpha,1]\cap{L})$, for any $\alpha\in(0,1]$, is named an \textbf{inf--compact} subset of $(0,1]$.

The following are examples of inf--compact subsets of $(0,1]$:
\begin{enumerate}[(1)]\sepa
\item
Any compact subset $C\subseteq(0,1]$, containing 1, is inf--compact. In particular, any finite subset and any closed subset of $(0,1]$, containing 1, are inf--compact.
\item
Any ascending sequence in $(0,1]$, union with $\{1\}$, is inf--compact.
\item
Any interval $[a,b)\subseteq(0,1]$, union with $\{1\}$, is inf--compact.
\item
Any union of finitely many inf--compact subsets is inf--compact.
\end{enumerate}

In the following, the domain of any partial gradual element will be an inf--compact subset of $(0,1]$, containing $1$; whence, any partial gradual element can be extended to a total gradual element.

{\begin{lemma}
If $\{C_i\mid\;i\in{I}\}$ is a family of inf--compact subsets, containing $1$, then $\cap_iC_i$ is inf--compact.
\end{lemma}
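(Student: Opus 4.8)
The plan is to unravel the definition of inf--compactness directly. Fix $\alpha\in(0,1]$; I must produce $\Min\big([\alpha,1]\cap\bigcap_iC_i\big)$. First I would observe that this set is nonempty: it contains $1$, since $1\in C_i$ for every $i$ (and $1\in[\alpha,1]$). Hence its infimum $m$ exists as a real number with $\alpha\le m\le 1$; in particular $m\in(0,1]$, which is exactly what is needed in order to apply the inf--compactness of each $C_i$ at the cut point $m$.

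The crux is to show that this infimum $m$ actually lies in every $C_i$, for then it belongs to the set $[\alpha,1]\cap\bigcap_iC_i$ and, being its infimum, is its minimum. For each $i$, inf--compactness of $C_i$ applied at $m$ yields $\zeta_i:=\Min([m,1]\cap C_i)$, so that $\zeta_i\in C_i$ and $\zeta_i\ge m$. On the other hand, any $s\in[\alpha,1]\cap\bigcap_jC_j$ satisfies $s\ge m$ and $s\in C_i$, hence $s\in[m,1]\cap C_i$ and so $s\ge\zeta_i$; thus $\zeta_i$ is a lower bound of $[\alpha,1]\cap\bigcap_jC_j$, and therefore $\zeta_i\le m$. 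Combining the two inequalities gives $\zeta_i=m$, whence $m\in C_i$. Since $i$ was arbitrary, $m\in\bigcap_iC_i$, and as $m\in[\alpha,1]$ we conclude $m\in[\alpha,1]\cap\bigcap_iC_i$; being the infimum of that set, it is its minimum. As $\alpha\in(0,1]$ was arbitrary, $\bigcap_iC_i$ is inf--compact.

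I do not expect a genuine obstacle here. The one subtlety worth flagging is that the infimum $m$ need not a priori belong to the individual $C_i$; the whole argument hinges on the observation that applying inf--compactness \emph{precisely at} the cut point $m$ forces the infimum of $[\alpha,1]\cap\bigcap_jC_j$ to be attained, and this attainment is inherited by the intersection. The hypothesis $1\in C_i$ for all $i$ is used only to ensure that the relevant sets are nonempty, so that the various $\Min$'s make sense.
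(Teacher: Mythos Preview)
Your proof is correct and follows essentially the same approach as the paper's: both take the infimum of $[\alpha,1]\cap\bigcap_iC_i$, apply inf--compactness of each $C_i$ at that infimum to obtain $\zeta_i=\Min([m,1]\cap C_i)$, and then squeeze $\zeta_i=m$ to conclude $m\in\bigcap_iC_i$. Your write-up is in fact slightly more careful, making explicit the role of the hypothesis $1\in C_i$ in guaranteeing nonemptiness.
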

\begin{proof}
For any $\alpha\in(0,1]$ let $\xi=\Inf([\alpha,1]\cap(\cap_iC_i))$, and define $\xi_i=\Min([\xi,1]\cap{C_i})$, hence $\xi\leq\xi_i$, for any $i\in{I}$. On the other hand, since $[\alpha,1]\cap(\cap_iC_i)=[\xi,1]\cap(\cap_iC_i)\subseteq[\xi,1]\cap{C_i}$, then $\Min([\xi,1]\cap{C_i})=\Inf([\xi,1]\cap{C_i})\leq\Inf([\xi,1]\cap(\cap_iC_i))=\xi$. In consequence, $\xi=\xi_i$ for any $i\in{I}$.
\end{proof}}

For any element $a\in{X}$ there exists a partial gradual element, which we represent by $\varepsilon_a$, with $\dom(\varepsilon_a)=\{1\}$, and defined by $\varepsilon_a(1)=a$. {We denote also by $\varepsilon_a$ the extension $\overline{\varepsilon_a}$}. Without lost of generality we may identify the element $a\in{X}$ and the gradual element $\varepsilon_a\in\mathcal{X}$, and denote them simply by $a$.

In this way, a gradual element is nothing more than a collection of elements of $X$, each one with a degree of membership; thus if $\varepsilon$ is a gradual element then $\varepsilon(\alpha)$ is an element of $X$ with membership degree $\alpha$. Since $\varepsilon(1)$ is always defined, we have it is an element of $X$ with the highest membership degree; since $\varepsilon(0)$ is not defined, then there is not any element with zero membership degree.

\subsection{Relations between gradual elements}

For any $\alpha\in[0,1]$, in the set of all partial gradual elements we define a relation $R_\alpha$ as: for partial gradual elements $\varepsilon_1$ and $\varepsilon_2$ of $X$ we say $\varepsilon_1R_\alpha\varepsilon_2$ if
$$
{\varepsilon_1}_{|[\alpha,1]\cap\dom(\varepsilon_1)\cap\dom(\varepsilon_2)}
={\varepsilon_2}_{|[\alpha,1]\cap\dom(\varepsilon_1)\cap\dom(\varepsilon_2)}.
$$
Observe that if $\alpha\in(0,1]$, and $\varepsilon_1,\varepsilon_2$ are total gradual elements then $\varepsilon_1R_\alpha\varepsilon_2$, whenever ${\varepsilon_1}_{|[\alpha,1]}={\varepsilon_2}_{|[\alpha,1]}$.

\begin{lemma}
For any $\alpha,\beta\in[0,1]$ we have:
\begin{enumerate}[(1)]\sepa
\item
If $\alpha,\beta\in[0,1]$ satisfy $\alpha\leq\beta$, then $R_\alpha\subseteq{R_\beta}$.
\item
The relation $R_\alpha$ is an equivalence relation in the set of all total gradual elements.
\end{enumerate}
\end{lemma}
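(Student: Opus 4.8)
The plan is to check both assertions directly against the definition of $R_\alpha$; the only delicate point is transitivity on partial gradual elements. For (1), fix partial gradual elements $\varepsilon_1,\varepsilon_2$ and set $D=\dom(\varepsilon_1)\cap\dom(\varepsilon_2)$. The whole of (1) reduces to the elementary fact that the test set $[\alpha,1]\cap D$ is monotone in $\alpha$: if $\alpha\leq\beta$ then $[\beta,1]\subseteq[\alpha,1]$, hence $[\beta,1]\cap D\subseteq[\alpha,1]\cap D$. So if ${\varepsilon_1}_{|[\alpha,1]\cap D}={\varepsilon_2}_{|[\alpha,1]\cap D}$, restricting this equality of maps to the smaller set gives ${\varepsilon_1}_{|[\beta,1]\cap D}={\varepsilon_2}_{|[\beta,1]\cap D}$, that is, $\varepsilon_1 R_\beta\varepsilon_2$; thus $R_\alpha\subseteq R_\beta$.

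For (2), reflexivity and symmetry hold with no restriction at all. A map agrees with itself on every subset of its domain, so $\varepsilon R_\alpha\varepsilon$; and both the test set $[\alpha,1]\cap\dom(\varepsilon_1)\cap\dom(\varepsilon_2)$ and the requested equality of restrictions are literally invariant under swapping $\varepsilon_1$ and $\varepsilon_2$, so $\varepsilon_1R_\alpha\varepsilon_2$ holds if and only if $\varepsilon_2R_\alpha\varepsilon_1$ does. Hence the substance of (2) is transitivity, and this is immediate for total gradual elements: there $\dom(\varepsilon_i)=(0,1]$, so the test set is the fixed interval $[\alpha,1]$ for every pair, and ${\varepsilon_1}_{|[\alpha,1]}={\varepsilon_2}_{|[\alpha,1]}$ together with ${\varepsilon_2}_{|[\alpha,1]}={\varepsilon_3}_{|[\alpha,1]}$ force ${\varepsilon_1}_{|[\alpha,1]}={\varepsilon_3}_{|[\alpha,1]}$ by transitivity of equality.

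The step I expect to be the main obstacle is transitivity of $R_\alpha$ on \emph{partial} gradual elements: from $\varepsilon_1R_\alpha\varepsilon_2$ and $\varepsilon_2R_\alpha\varepsilon_3$ one must deduce that $\varepsilon_1$ and $\varepsilon_3$ agree on $[\alpha,1]\cap\dom(\varepsilon_1)\cap\dom(\varepsilon_3)$, yet a point $\gamma$ of this set need not lie in $\dom(\varepsilon_2)$, so $\varepsilon_2$ cannot be used as a pointwise bridge there. To get around this I would invoke the standing hypothesis of this section — every partial gradual element has inf--compact domain containing $1$ — and replace each $\varepsilon_i$ by its canonical total extension $\overline{\varepsilon_i}$, reducing the claim to the total case already handled; in other words the assertion for partial gradual elements is read through the identification $\varepsilon\leftrightarrow\overline{\varepsilon}$. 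Combining this with (1) and with the reflexivity and symmetry observed above completes the proof.
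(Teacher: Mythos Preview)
Your treatment of (1), of reflexivity and symmetry in (2), and of transitivity on total gradual elements is correct and is precisely the routine verification one would expect; the paper states this lemma without proof, so there is nothing further to compare on those points.

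The genuine gap is in your handling of transitivity for \emph{partial} gradual elements. You propose to replace each $\varepsilon_i$ by its total extension $\overline{\varepsilon_i}$ and reduce to the total case. But the paper, in the example immediately following this lemma, shows explicitly that the extension process is \emph{not} compatible with $R_\alpha$: one can have $\varepsilon_1 R_\alpha \varepsilon_2$ while $\overline{\varepsilon_1}\,R_\alpha\,\overline{\varepsilon_2}$ fails. So the identification $\varepsilon\leftrightarrow\overline{\varepsilon}$ does not transport $R_\alpha$ from partials to totals, and your reduction collapses.

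In fact the obstacle you correctly isolated is fatal, not merely delicate: transitivity of $R_\alpha$ on partial gradual elements is false as stated. Take $X=\{a,b,c\}$ and any $\alpha\leq\tfrac12$. Let $\varepsilon_1,\varepsilon_3$ have domain $\{1,\tfrac12\}$ with $\varepsilon_1(1)=\varepsilon_3(1)=a$, $\varepsilon_1(\tfrac12)=b$, $\varepsilon_3(\tfrac12)=c$, and let $\varepsilon_2$ have domain $\{1\}$ with $\varepsilon_2(1)=a$. All three domains are finite, hence inf--compact, and contain $1$, so the standing hypothesis you invoke is satisfied. Then $\dom(\varepsilon_1)\cap\dom(\varepsilon_2)=\dom(\varepsilon_2)\cap\dom(\varepsilon_3)=\{1\}$, whence $\varepsilon_1 R_\alpha \varepsilon_2$ and $\varepsilon_2 R_\alpha \varepsilon_3$; but $\varepsilon_1(\tfrac12)\neq\varepsilon_3(\tfrac12)$, so $\varepsilon_1 R_\alpha \varepsilon_3$ fails. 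Your difficulty is therefore with the statement itself, not with your method: part (2) holds for total gradual elements, but not for partial ones under the paper's definition of $R_\alpha$.
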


The equivalence relation $R_\alpha$ indicates us when two gradual elements are equal at a certain level. For instance,
\begin{enumerate}[(1)]\sepa
\item
if $\alpha=1$, then we only have an equivalence class for each element of $X$;
\item
if $\alpha=0$, then two gradual elements belong to the same equivalence class if, and only if, they coincide in their definition domains.
\end{enumerate}

It is necessary to remark that these equivalence relations $R_\alpha$ are not compatible with the extension process. Indeed, if $\varepsilon_1{R_\alpha}\varepsilon_2$, not necessarily $\overline{\varepsilon}_1R_\alpha\overline{\varepsilon}_2$ as the following example shows.

{\begin{example}
Let $X=\{a,b\}$, and define
$$
\varepsilon_1(\delta)=\left\{\begin{array}{ll}
b,&\textrm{ if }\delta=1,\\
a,&\textrm{ if }\frac{1}{2}\leq\delta<1,
\end{array}\right.
\qquad
\varepsilon_2(\delta)=\left\{\begin{array}{ll}
b,&\textrm{ if }\delta=1,\\
a,&\textrm{ if }0<\delta\leq\frac{1}{2}.
\end{array}\right.
$$
Then $\varepsilon_1R_\alpha\varepsilon_2$, for any $\alpha\in(0,1]$, but $\overline{\varepsilon}_1R_\alpha\overline{\varepsilon}_2$ if, and only if, $\alpha=1$.
\end{example}}

Let $f:X\longrightarrow{Y}$ be a map between two sets;
\begin{enumerate}[(1)]\sepa
\item
for any total gradual (resp. partial gradual) element $\varepsilon$ of $X$ we have a total gradual (resp. partial gradual) element of $Y$ defined by the composition $f\varepsilon:\dom(\varepsilon)\longrightarrow{X}\longrightarrow{Y}$; we call $f\varepsilon$ the \textbf{image} of $\varepsilon$ by $f$;
\item
for any gradual element $\varepsilon'$ of $Y$ a gradual element $\varepsilon$ of $X$ is an \textbf{inverse image} of $\varepsilon'$ if $\varepsilon'=f\varepsilon$.
\end{enumerate}

\subsection{Binary operations and gradual elements}

There is another method to relate gradual elements of a set $X$, this is the case in which there exists a binary operation in $X$.

Let $G$ be a set together a binary operation, say $*$, and $\varepsilon_1,\varepsilon_2$ gradual elements of $G$, we define a new gradual element $\varepsilon_1*\varepsilon_2$ as:
$$
(\varepsilon_1*\varepsilon_2)(\alpha)=\varepsilon_1(\alpha)*\varepsilon_2(\alpha),
\textrm{ for any }\alpha\in\dom(\varepsilon_1)\cap\dom(\varepsilon_2).
$$
In the case of partial gradual elements $\varepsilon_1,\varepsilon_2$ we have that $\dom(\varepsilon_1*\varepsilon_2)=\dom(\varepsilon_1)\cap\dom(\varepsilon_2)$.

This operation non necessarily is compatible with the extension construction.

The following example shows that if we start from two partial gradual elements $\varepsilon_1$ and $\varepsilon_2$, then not necessarily we have the equality: $\overline{\varepsilon_1*\varepsilon_2}=\overline{\varepsilon}_1*\overline{\varepsilon}_2$, i.e., then extension map is not necessarily a homomorphism with respect to the binary operation $*$.

\begin{example}\label{ex:20181210}
Let $\varepsilon_1,\varepsilon_2$ be partial gradual elements defined on $\mathbb{Z}$, defined as:
\[
\begin{array}{c}
\varepsilon_1(\alpha)=\left\{\begin{array}{ll}
2,&\textrm{ if }\frac{1}{2}\leq\alpha\leq1,\\
1,&\textrm{ if }\frac{1}{10}\leq\alpha\leq\frac{1}{3},\\
\end{array}\right.
\qquad
\varepsilon_2(\alpha)=2,\textrm{ if }\frac{2}{3}\leq\alpha\leq1.
\end{array}
\]
In this case we have
\[
\begin{array}{l}
(\varepsilon_1+\varepsilon_2)(\alpha)=4\textrm{, if }\frac{2}{3}\leq\alpha\leq1,
\end{array}
\]
and the extended gradual elements are
\[
\overline{\varepsilon_1}(\alpha)=\left\{\begin{array}{ll}
2,&\textrm{ if }\frac{1}{3}<\alpha\leq1,\\
1,&\textrm{ if }\alpha\leq\frac{1}{3},\\
\end{array}\right.
\quad
\overline{\varepsilon_2}(\alpha)=2,\textrm{ if }\alpha\leq1,
\quad
\overline{(\varepsilon_1+\varepsilon_2)}(\alpha)=4,\textrm{ if }\alpha\leq1.
\]
On the other hand, we have
\[
(\overline{\varepsilon_1}+\overline{\varepsilon_2})(\alpha)=\left\{\begin{array}{ll}
4,&\textrm{ if }\frac{1}{3}<\alpha\leq1,\\
3,&\textrm{ if }\alpha\leq\frac{1}{3}.\\
\end{array}\right.
\]
{\[
\begin{array}{lll}
\setlength{\unitlength}{.4cm}
\begin{picture}(10,3)(0,0)
\linethickness{1pt}
\put(-1,.3){\large{$\varepsilon_1$}}
\put(0,0){\line(1,0){10}}
\put(10,0){\line(0,1){2}}
\put(5,0){\line(0,1){2}}
\put(5,2){\line(1,0){5}}
\put(7,.3){\large{2}}
\put(3.3,0){\line(0,1){1}}
\put(1,0){\line(0,1){1}}
\put(1,1){\line(1,0){2.3}}
\put(2,1.2){\large{1}}
\put(10,-1){1}
\put(5,-1){$\frac{1}{2}$}
\put(3.3,-1){$\frac{1}{3}$}
\put(1,-1){$\frac{1}{10}$}
\end{picture}    &\qquad\qquad\quad  &\setlength{\unitlength}{.4cm}
\begin{picture}(10,3)(0,0)
\linethickness{1pt}
\put(-1.5,.3){\large{$\overline{\varepsilon_1}$}}
\put(0,0){\line(1,0){10}}
\put(10,0){\line(0,1){2}}
\put(3.3,0){\line(0,1){1}}
\put(3.3,2){\line(1,0){6.7}}
\put(8,.5){\large{2}}
\put(3.3,0){\line(0,1){1}}
\put(0,1){\line(1,0){3.3}}
\put(2,1.2){\large{1}}
\put(10,-1){1}
\put(3.3,-1){$\frac{1}{3}$}
\end{picture} \\
\setlength{\unitlength}{.4cm}
\begin{picture}(10,3.5)(0,0)
\linethickness{1pt}
\put(-1,.3){\large{$\varepsilon_2$}}
\put(0,0){\line(1,0){10}}
\put(10,0){\line(0,1){2}}
\put(6.6,0){\line(0,1){2}}
\put(6.6,2){\line(1,0){3.4}}
\put(8,.2){\large{2}}
\put(10,-1){1}
\put(6.6,-1){$\frac{2}{3}$}
\end{picture}
   &\qquad\qquad\quad  &\setlength{\unitlength}{.4cm}
\begin{picture}(10,3.5)(0,0)
\linethickness{1pt}
\put(-1,.3){\large{$\overline{\varepsilon_2}$}}
\put(0,0){\line(1,0){10}}
\put(10,0){\line(0,1){2}}
\put(0,2){\line(1,0){10}}
\put(5,.3){\large{2}}
\put(10,-1){1}
\end{picture}
\\\\
\setlength{\unitlength}{.4cm}
\begin{picture}(10,4.5)(0,0)
\linethickness{1pt}
\put(-3,.3){\large{$\varepsilon_1+\varepsilon_2$}}
\put(0,0){\line(1,0){10}}
\put(10,0){\line(0,1){4}}
\put(6.6,0){\line(0,1){4}}
\put(6.6,4){\line(1,0){3.4}}
\put(8,.3){\large{4}}
\put(10,-1){1}
\put(6.6,-1){$\frac{2}{3}$}
\end{picture}
 &\qquad\qquad\quad  &\setlength{\unitlength}{.4cm}
\begin{picture}(10,4.5)(0,0)
\linethickness{1pt}
\put(-3,.3){\large{$\overline{\varepsilon_1}+\overline{\varepsilon_2}$}}
\put(0,0){\line(1,0){10}}
\put(10,0){\line(0,1){4}}
\put(3.3,0){\line(0,1){1}}
\put(3.3,4){\line(1,0){6.7}}
\put(8,.5){\large{4}}
\put(3.3,0){\line(0,1){3}}
\put(0,3){\line(1,0){3.3}}
\put(2,.3){\large{3}}
\put(10,-1){1}
\put(3.3,-1){$\frac{1}{3}$}
\end{picture}
\\\\
             &                   &\setlength{\unitlength}{.4cm}
\begin{picture}(10,4.5)(0,0)
\linethickness{1pt}
\put(-3,.3){\large{$\overline{\varepsilon_1+\varepsilon_2}$}}
\put(0,0){\line(1,0){10}}
\put(10,0){\line(0,1){4}}
\put(0,4){\line(1,0){10}}
\put(5,.3){\large{4}}
\put(10,-1){1}
\end{picture}

\end{array}
\]}
\end{example}

On the other hand, this operation is compatible with the equivalence relations $R_\alpha$.

\begin{lemma}\label{le:20180420}
Let $G$ be a set together a binary operation $*$, for any $\alpha\in[0,1]$ the relations $R_\alpha$ { in the set of all total gradual elements (resp. in the set of all partial gradual elements)} are \textbf{compatible} with the binary operation, i.e., for gradual elements $\varepsilon,\varepsilon_1,\varepsilon_2$ of $G$, if $\varepsilon_1R_\alpha\varepsilon_2$ then $(\varepsilon*\varepsilon_1)R_\alpha(\varepsilon*\varepsilon_2)$ and $(\varepsilon_1*\varepsilon)R_\alpha(\varepsilon_2*\varepsilon)$.
\end{lemma}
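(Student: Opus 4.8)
The plan is to reduce the whole statement to a pointwise verification, handling first the easy case of total gradual elements and then the partial case, where the only real content is careful bookkeeping of domains of definition.

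First I would treat total gradual elements. Suppose $\varepsilon_1R_\alpha\varepsilon_2$, so that $\varepsilon_1(\beta)=\varepsilon_2(\beta)$ for every $\beta\in[\alpha,1]\cap(0,1]$. Fix such a $\beta$. By the definition of the induced operation, $(\varepsilon*\varepsilon_1)(\beta)=\varepsilon(\beta)*\varepsilon_1(\beta)=\varepsilon(\beta)*\varepsilon_2(\beta)=(\varepsilon*\varepsilon_2)(\beta)$, and symmetrically $(\varepsilon_1*\varepsilon)(\beta)=(\varepsilon_2*\varepsilon)(\beta)$. Since $\beta$ was arbitrary in the relevant range, this gives $(\varepsilon*\varepsilon_1)R_\alpha(\varepsilon*\varepsilon_2)$ and $(\varepsilon_1*\varepsilon)R_\alpha(\varepsilon_2*\varepsilon)$.

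For the partial case the key observation, which I would record first, is that $\dom(\varepsilon*\varepsilon_i)=\dom(\varepsilon)\cap\dom(\varepsilon_i)$ for $i=1,2$, hence
\[
\dom(\varepsilon*\varepsilon_1)\cap\dom(\varepsilon*\varepsilon_2)=\dom(\varepsilon)\cap\dom(\varepsilon_1)\cap\dom(\varepsilon_2)\subseteq\dom(\varepsilon_1)\cap\dom(\varepsilon_2).
\]
Assuming $\varepsilon_1R_\alpha\varepsilon_2$, for every $\beta\in[\alpha,1]\cap\dom(\varepsilon*\varepsilon_1)\cap\dom(\varepsilon*\varepsilon_2)$ we have in particular $\beta\in[\alpha,1]\cap\dom(\varepsilon_1)\cap\dom(\varepsilon_2)$, so $\varepsilon_1(\beta)=\varepsilon_2(\beta)$; then the same one-line computation as above yields $(\varepsilon*\varepsilon_1)(\beta)=(\varepsilon*\varepsilon_2)(\beta)$ and $(\varepsilon_1*\varepsilon)(\beta)=(\varepsilon_2*\varepsilon)(\beta)$. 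This is precisely the condition $(\varepsilon*\varepsilon_1)R_\alpha(\varepsilon*\varepsilon_2)$ and $(\varepsilon_1*\varepsilon)R_\alpha(\varepsilon_2*\varepsilon)$.

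I do not expect a genuine obstacle here: the statement is a matter of unwinding the definitions of $R_\alpha$ and of the pointwise operation. The one point that requires care — and the reason the partial case is worth stating separately — is that comparing $\varepsilon*\varepsilon_1$ and $\varepsilon*\varepsilon_2$ under $R_\alpha$ only involves the intersection of \emph{their} domains, which is contained in $\dom(\varepsilon_1)\cap\dom(\varepsilon_2)$, so the hypothesis $\varepsilon_1R_\alpha\varepsilon_2$ is available on the entire range of $\beta$ that matters. No extension or inf--compactness hypothesis is needed.
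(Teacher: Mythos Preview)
Your proof is correct and follows the same pointwise argument as the paper, which simply takes $\beta\in[\alpha,1]\cap\dom(\varepsilon_1)\cap\dom(\varepsilon_2)\cap\dom(\varepsilon)$ and computes $(\varepsilon_1*\varepsilon)(\beta)=\varepsilon_1(\beta)*\varepsilon(\beta)=\varepsilon_2(\beta)*\varepsilon(\beta)=(\varepsilon_2*\varepsilon)(\beta)$. Your version is more explicit about the domain bookkeeping and separates the total and partial cases, but the core idea is identical.
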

\begin{proof}
Let $\varepsilon_1,\varepsilon_2,\varepsilon$ be gradual elements such that $\varepsilon_1{R_\alpha}\varepsilon_2$, for any $\beta\in[\alpha,1]\cap\dom(\varepsilon_1)\cap\dom(\varepsilon_2)\cap\dom(\varepsilon)$ we have:
\[
(\varepsilon_1*\varepsilon)(\beta)
=\varepsilon_1(\beta)*\varepsilon(\beta)
=\varepsilon_2(\beta)*\varepsilon(\beta)
=(\varepsilon_2*\varepsilon)(\beta).
\]
\end{proof}

In some cases, in which $G$ {has a richer structure}, this structure could be inherited by the sets of  gradual elements. Let us show an example.

\begin{lemma}\label{le:20181108}
Let $G$ be a group, with binary operation $*$ and neutral element $e$, the following statements hold:
\begin{enumerate}[(1)]\sepa
\item
{The set $\mathcal{G}$ of all total gradual elements} is a group with neutral element $e$, i.e., the total gradual element $\varepsilon_e$.
\item
For any $\alpha\in[0,1]$ we have that $\mathcal{G}/R_\alpha$ is a group.
\end{enumerate}
\end{lemma}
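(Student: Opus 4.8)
The plan is to recognise $\mathcal{G}$ as a product of copies of $G$ and then to quotient by a congruence; beyond that nothing but standard group theory is needed. For part~(1), I would first note that with the pointwise operation $\mathcal{G}$ is precisely the direct product $G^{(0,1]}$: the product $\varepsilon_1*\varepsilon_2$ of two total gradual elements is again total, because $\dom(\varepsilon_1*\varepsilon_2)=\dom(\varepsilon_1)\cap\dom(\varepsilon_2)=(0,1]$. Associativity then holds since it holds in $G$ at each level $\alpha$; the constant map $\alpha\mapsto e$ (which, under the identification of an element of $G$ with its extended gradual element, is the element $e$) is a two--sided neutral element; and $\varepsilon^{-1}$, defined by $\varepsilon^{-1}(\alpha)=\varepsilon(\alpha)^{-1}$, is a total gradual element and a two--sided inverse of $\varepsilon$. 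Hence $\mathcal{G}$ is a group.

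For part~(2), I would use that each $R_\alpha$ is an equivalence relation on $\mathcal{G}$ (already established above) together with Lemma~\ref{le:20180420}, which gives compatibility of $R_\alpha$ with $*$ on both the left and the right. Combining the two one--sided compatibilities with transitivity, $\varepsilon_1R_\alpha\varepsilon_1'$ and $\varepsilon_2R_\alpha\varepsilon_2'$ imply
\[
\varepsilon_1*\varepsilon_2\;R_\alpha\;\varepsilon_1'*\varepsilon_2\;R_\alpha\;\varepsilon_1'*\varepsilon_2',
\]
so $R_\alpha$ is a congruence on the group $\mathcal{G}$. Consequently the rule $[\varepsilon_1]*[\varepsilon_2]:=[\varepsilon_1*\varepsilon_2]$ is well defined on $\mathcal{G}/R_\alpha$, associativity descends from $\mathcal{G}$, $[e]$ is neutral, and $[\varepsilon]^{-1}=[\varepsilon^{-1}]$ (the compatibility $\varepsilon_1R_\alpha\varepsilon_2\Rightarrow\varepsilon_1^{-1}R_\alpha\varepsilon_2^{-1}$ being immediate, since $\varepsilon_1(\beta)=\varepsilon_2(\beta)$ forces $\varepsilon_1(\beta)^{-1}=\varepsilon_2(\beta)^{-1}$ for $\beta\in[\alpha,1]$). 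If a concrete description is wanted, restriction $\varepsilon\mapsto\varepsilon_{|[\alpha,1]\cap(0,1]}$ is a surjective homomorphism $\mathcal{G}\longrightarrow G^{[\alpha,1]\cap(0,1]}$ whose associated congruence is exactly $R_\alpha$, so $\mathcal{G}/R_\alpha\cong G^{[\alpha,1]\cap(0,1]}$.

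I do not expect a genuine obstacle here; the real content of the statement is that restricting to total gradual elements is essential (for partial ones $\dom(\varepsilon_1*\varepsilon_2)$ shrinks and there is no global neutral element, so the analogous claim fails), and that one needs the two--sided compatibility of Lemma~\ref{le:20180420}, not merely a one--sided version, in order to pass to the quotient. Granted those two points, the group axioms for $\mathcal{G}$ and for $\mathcal{G}/R_\alpha$ are verified level by level from those of $G$.
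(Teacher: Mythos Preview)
Your proposal is correct and follows essentially the same approach as the paper: verify the group axioms for $\mathcal{G}$ pointwise from those of $G$, then use that $R_\alpha$ is a compatible equivalence relation (Lemma~\ref{le:20180420}) to conclude that the quotient is a group. Your write-up is more detailed than the paper's (which is very terse) and adds the identification $\mathcal{G}\cong G^{(0,1]}$ and $\mathcal{G}/R_\alpha\cong G^{[\alpha,1]\cap(0,1]}$, but the underlying argument is the same.
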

\begin{proof}
For any $\varepsilon,\varepsilon_1,\varepsilon_2\in\mathcal{G}$ we define, for any $\alpha\in(0,1]$:
\[
(\varepsilon_1*\varepsilon_2)(\alpha)=\varepsilon_1(\alpha)*\varepsilon_2(\alpha),
\quad\textrm {and }
\quad
(\varepsilon^{-1})(\alpha)=\varepsilon(\alpha)^{-1}.
\]
\par
(1). %
In $\mathcal{G}$ the operation is associative and $e$ is the neutral element. For any $\varepsilon\in\mathcal{G}$ we have $\varepsilon^{-1}$ is the inverse of $\varepsilon$. Therefore $\mathcal{G}$ is a group, and it is abelian whenever $G$ is.
\par (2). %
It is a direct consequence of being $R_\alpha$ a compatible equivalence relation.
\end{proof}

\begin{remark}[The particular case of partial gradual elements]
Let $G$ be a group, in the set $\underline{\mathcal{G}}$ of all partial gradual elements we have an associative binary operation, $(\varepsilon_1*\varepsilon_2)(\alpha)=\varepsilon_1(\alpha)*\varepsilon_2(\alpha)$ for any $\alpha\in\dom(\varepsilon_1)\cap\dom(\varepsilon_2)$, but we have ``many'' possible neutral elements. Thus, to get a useful structure we must define before an equivalence relation to put together all of them. For instance, given two partial gradual elements $\varepsilon_1,\varepsilon_2$, since $\varepsilon_1*\varepsilon_2$ is defined on $\dom(\varepsilon_1)\cap\dom(\varepsilon_2)$, there are three possible neutral elements:
$e_{|\dom(\varepsilon_1)}, e_{|\dom(\varepsilon_2)}$ and finally $e_{|\dom(\varepsilon_1)\cap\dom(\varepsilon_2)}$, which are different two to two.

We can try to fix this problem in defining an equivalence relation $R$ in $\underline{\mathcal{G}}$ generated by:
$$
\varepsilon_1R\varepsilon_2\textrm{, if there is an inf--compact subset }C\subseteq(0,1]\textrm{ containing 1, such that }{\varepsilon_1}_{|C}={\varepsilon_2}_{|C}.
$$
With the relation $R$, the problem is that we may have $\dom(\varepsilon_1)\cap\dom(\varepsilon_2)=\{1\}$, and this trivialize this relation.

Hence, to obtain a well defined structure on partial gradual elements we may consider only special types of partial gradual elements, for instance, the subset of $\underline{\mathcal{G}}$ constituted by those partial gradual elements who have the same (inf--compact) domain containing 1.
\end{remark}

Thus we can extend the above Lemma~\eqref{le:20181108} to consider gradual elements defined on an inf--compact subset containing 1.

\begin{proposition}
Let $G$ be a  group, and let $C\subseteq(0,1]$ be a inf--compact subset containing 1. If $\underline{\mathcal{G}}$ be the set of all partial gradual elements whose domain is $C$, the following statements hold:
\begin{enumerate}[(1)]\sepa
\item\label{it:000}
For any $\alpha\in{C}$ the relation $R_\alpha$ is an equivalente relation in $\underline{\mathcal{G}}$.
\item\label{it:001}
In $\underline{\mathcal{G}}$ we have an associative operation.
\item\label{it:002}
The extending map from $\underline{\mathcal{G}}$ to $\mathcal{G}$ is a group monomorphism.
\item\label{it:003}
For any $\alpha\in{C}$ the equivalence relation $R_\alpha$ in $\underline{\mathcal{G}}$ is compatible.
\item\label{it:004}
The groups $\underline{\mathcal{G}}$ and $\underline{\mathcal{G}}/R_\alpha$ are abelian whenever $G$ is.
\end{enumerate}
\end{proposition}
\begin{proof}
\eqref{it:000}. %
It is reflexive and symmetric, and obviously it is transitive as the domain is the whole set $C$.
\par
\eqref{it:001}. %
It is evident as the product is defined componentwise.
\par
\eqref{it:002}. %
Let $\varepsilon_1,\varepsilon_2\in\underline{\mathcal{G}}$, and $\xi=\Min([\alpha,1]\cap{C})$, then
\[
(\overline{\varepsilon_1*\varepsilon_2)}(\alpha)
=(\varepsilon*\varepsilon_2)(\xi)
=\varepsilon_1(\xi)*\varepsilon_2(\xi)
=\overline{\varepsilon}_1(\alpha)*\overline{\varepsilon}_2(\alpha).
\]
\par
\eqref{it:003}. %
It is similar to the proof on Lemma~\eqref{le:20180420}.
\par
\eqref{it:004}. %
It is evident as the product is defined componentwise.
\end{proof}

It is clear that it is better to consider total gradual elements instead of partial gradual elements, and therefore work in $\mathcal{G}$.

If the group $\mathcal{G}$ has $e$ as neutral element, and for any $\alpha$ in $[0,1]$ we consider the equivalence relation $R_\alpha$, we may rewriting Lemma~\eqref{le:20180420} obtaining a filtration of subgroups of $\mathcal{G}$.

\begin{proposition}
Let $G$ be a group with neutral element $e$, if for any $\alpha\in[0,1]$ we define
$$
\mathcal{G}_\alpha=\{\varepsilon\in\mathcal{G}\mid\;\varepsilon{R_\alpha}e\},
$$
then we have:
\begin{enumerate}[(1)]\sepa
\item
For each $\alpha\in(0,1]$ the subgroup $\mathcal{G}_\alpha\subseteq\mathcal{G}$ is a normal.
\item
There is a filtration $\{\mathcal{G}_\alpha\mid\;\alpha\in[0,1]\}$ where $\mathcal{G}_\alpha\subseteq\mathcal{G}_\beta$ is $\alpha\leq\beta$.
\item
We have inclusions:
$\mathcal{G}_0\subseteq\mathcal{G}_\alpha\subseteq\mathcal{G}_1\subseteq\mathcal{G}$,
and surjective group homomorphisms:
$$
\mathcal{G}/\mathcal{G}_0
\longrightarrow\mathcal{G}/\mathcal{G}_\alpha
\longrightarrow\mathcal{G}/\mathcal{G}_1\cong{G}.
$$
\end{enumerate}
\end{proposition}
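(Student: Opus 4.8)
The plan is to realize each $\mathcal{G}_\alpha$ as the kernel of a group homomorphism out of $\mathcal{G}$, so that (1) is immediate and (2)--(3) reduce to bookkeeping with the standard isomorphism theorems. First I would unwind the definition for \emph{total} gradual elements: $\varepsilon R_\alpha e$ means ${\varepsilon}_{|[\alpha,1]}=e_{|[\alpha,1]}$, i.e. $\varepsilon(\beta)=e$ for every $\beta\in[\alpha,1]$. Hence $\mathcal{G}_0=\{e\}$ (the constant gradual element $e$), and $\mathcal{G}_1=\{\varepsilon\in\mathcal{G}\mid\varepsilon(1)=e\}$. Next, by Lemma~\eqref{le:20181108} the set $\mathcal{G}$ is a group, with operation and inverse taken componentwise, and by Lemma~\eqref{le:20180420} the relation $R_\alpha$ is a compatible equivalence relation on $\mathcal{G}$; therefore $\mathcal{G}/R_\alpha$ is a group and the canonical projection $\pi_\alpha\colon\mathcal{G}\longrightarrow\mathcal{G}/R_\alpha$ is a group homomorphism whose kernel is the $R_\alpha$--class of $e$, which is exactly $\mathcal{G}_\alpha$. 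This proves (1): $\mathcal{G}_\alpha$ is a normal subgroup of $\mathcal{G}$. If one prefers an elementary verification, normality is the computation $(\varepsilon*\delta*\varepsilon^{-1})(\beta)=\varepsilon(\beta)*e*\varepsilon(\beta)^{-1}=e$ for $\varepsilon\in\mathcal{G}$, $\delta\in\mathcal{G}_\alpha$, $\beta\geq\alpha$, and closure under product and inverse is the same one-line check.

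For (2) I would invoke the monotonicity $R_\alpha\subseteq R_\beta$ for $\alpha\leq\beta$ (the earlier lemma on the relations $R_\alpha$): the $R_\alpha$--class of $e$ is then contained in its $R_\beta$--class, that is $\mathcal{G}_\alpha\subseteq\mathcal{G}_\beta$; concretely, if $\varepsilon$ is constantly $e$ on $[\alpha,1]$ it is a fortiori constantly $e$ on the smaller set $[\beta,1]$.

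For (3), fixing $\alpha\in(0,1]$ gives $0\leq\alpha\leq1$, so (2) yields $\mathcal{G}_0\subseteq\mathcal{G}_\alpha\subseteq\mathcal{G}_1\subseteq\mathcal{G}$, all normal in $\mathcal{G}$ by (1); the correspondence theorem then provides the canonical surjections $\mathcal{G}/\mathcal{G}_0\longrightarrow\mathcal{G}/\mathcal{G}_\alpha\longrightarrow\mathcal{G}/\mathcal{G}_1$. Finally I would identify $\mathcal{G}/\mathcal{G}_1$ with $G$ through the evaluation map $\mathcal{G}\longrightarrow G$, $\varepsilon\mapsto\varepsilon(1)$: it is a group homomorphism because the operation on $\mathcal{G}$ is componentwise, it is surjective because every $a\in G$ is the value at $1$ of the constant gradual element $a$, and its kernel is $\mathcal{G}_1$; the first isomorphism theorem gives $\mathcal{G}/\mathcal{G}_1\cong G$, and since $\mathcal{G}_0=\{e\}$ the composite $\mathcal{G}/\mathcal{G}_0\longrightarrow\mathcal{G}/\mathcal{G}_1$ is precisely this evaluation map.

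I do not anticipate a genuine obstacle: everything rests on the two facts already available in the excerpt, namely that $R_\alpha$ is an equivalence relation compatible with the operation and that it is monotone in $\alpha$, together with the isomorphism theorems. The one point meriting a moment's care is reading $R_\alpha$ correctly on total gradual elements, i.e. that it forces equality on \emph{all} of $[\alpha,1]$, so that $\mathcal{G}_\alpha=\{\varepsilon\mid{\varepsilon}_{|[\alpha,1]}=e_{|[\alpha,1]}\}$ and in particular $\mathcal{G}_0$ is the trivial subgroup rather than something larger.
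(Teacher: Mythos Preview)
Your proof is correct and, in fact, more detailed than what the paper provides: the paper states this proposition without proof, prefacing it only with the remark that it is obtained by ``rewriting'' Lemma~\ref{le:20180420} (the compatibility of $R_\alpha$ with the operation). Your approach---identifying $\mathcal{G}_\alpha$ as the kernel of the canonical projection $\mathcal{G}\to\mathcal{G}/R_\alpha$, invoking the monotonicity $R_\alpha\subseteq R_\beta$ for the filtration, and using evaluation at $1$ together with the first isomorphism theorem for $\mathcal{G}/\mathcal{G}_1\cong G$---is exactly the natural expansion of that remark, drawing on precisely the two lemmas the paper has set up for this purpose.
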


Observe that in all these examples it seems that the way to define an operation on gradual elements is to define it componentwise.

If the base set $X$ has a more richer structure; for instance, if it is a ring $R$, then the corresponding sets $\mathcal{R}$ and $\underline{\mathcal{R}}$ are rings, but there are in these rings many elements which are zero--divisor. So, in this case, the use of gradual elements is not a good option. For that, in this and forthcoming papers, we shall develop a different approach to study algebraic structures. Before doing that, let us study the simplest notion of gradual subset, and after doing this we shall return to consider a set endowed with one or several binary operations, for instance a group.

\section{Gradual subsets}\label{se:20181115}

Once we have established the notion of gradual element of a set $X$, we shall apply it to define new objects. If we consider a set $X$ and the power set $\mathcal{P}(X)$, we can study gradual elements of $\mathcal{P}(X)$, thereby the concept of gradual subset appears.

\subsection{Gradual subsets}

{\begin{definition}
Let $X$ be a set, and let $\mathcal{P}(X)$ be the power set of $X$, i.e., $\mathcal{P}(X)=\{S\mid\;S\textrm{ is a subset of }X\}$. We define a \textbf{gradual subset} of $X$ as a gradual element of $\mathcal{P}(X)$. We represent by $\sigma:(0,1]\longrightarrow\mathcal{P}(X)$ a gradual subset of $X$.
\end{definition}}

Throughout this section we follow the same assumptions used for gradual elements in the previous section. In this way, we have defined \textbf{partial gradual subsets} and \textbf{total gradual subsets}.

In some sense gradual subsets are a generalization of gradual elements. Thus, for any gradual element $\varepsilon$ and any gradual subset $\sigma$ we say $\varepsilon$ \textbf{belongs} to $\sigma$ if for any $\alpha\in\dom(\varepsilon)\cap\dom(\sigma)\subseteq[0,1]$ we have $\varepsilon(\alpha)\in\sigma(\alpha)$, and write $\varepsilon\in\sigma$. In the same way, given two gradual subsets $\sigma_1,\sigma_2$, we say that $\sigma_1$ is a \textbf{subset} of $\sigma_2$ if $\sigma_1(\alpha)\subseteq\sigma_2(\alpha)$ for any $\alpha\in\dom(\sigma_1)\cap\dom(\sigma_2)$, and write $\sigma_1\subseteq\sigma_2$.

In general, for any gradual subset $\sigma$, and elements $\alpha,\beta\in\dom(\sigma)$ such that $\alpha\leq\beta$, we have no information about the relationship of $\sigma(\alpha)$ and $\sigma(\beta)$. In some cases, as in the classical one of $\alpha$--levels in fuzzy set theory, there is an evident relationship, as we shall see later. To work with them, first we introduce the following definitions, that reflect toe order existing in $(0,1]$.

Let $\sigma$ be a gradual subset of $X$ we say $\sigma$ is
\begin{enumerate}[(1)]
\item
\textbf{increasing} if for any $\alpha,\beta\in\dom(\sigma)$ such that $\alpha\leq\beta$, we have $\sigma(\alpha)\subseteq\sigma(\beta)$. For any increasing gradual subset $\sigma$ de $X$, if $\zeta=\Min(\dom(\sigma))$, then $\sigma(\zeta)\subseteq\sigma(\alpha)\subseteq\sigma(1)$ for any $\alpha\in\dom(\sigma)$.
\item
\textbf{decreasing} if for any $\alpha,\beta\in\dom(\sigma)$ such that $\alpha\leq\beta$, we have $\sigma(\alpha)\supseteq\sigma(\beta)$. For any decreasing gradual subset $\sigma$ de $X$, we have $\sigma(1)\subseteq\sigma(\alpha)$ for any $\alpha\in\dom(\sigma)$.
\end{enumerate}

Let us show some examples of decreasing gradual subsets.

\begin{example}
Let $\mu$ be a fuzzy subset of $X$, i.e., a map $\mu:X\longrightarrow[0,1]$ that we assume it is not constant equal to 0. For any $\alpha\in(0,1]$ we define the
\begin{enumerate}[(1)]
\item
\textbf{$\alpha$--level} of $\mu$ as $\mu_\alpha=\{x\in{X}\mid\;\mu(x)\geq\alpha\}$.
\newline
In this case we have a decreasing gradual subset $\sigma(\mu)$, defined $\sigma(\mu)(\alpha)=\mu_\alpha$ for any $\alpha\in(0,1]$.
\item
\textbf{strict $\alpha$--level} (or \textbf{strong $\alpha$-level}) of $\mu$ as $\widetilde\mu_\alpha=\{x\in{X}\mid\;\mu(x)>\alpha\}$.
\newline
In this case we have a decreasing gradual subset $\widetilde{\sigma}(\mu)$, defined
$$
\widetilde{\sigma}(\mu)(\alpha)=\left\{\begin{array}{ll}
\widetilde\mu_\alpha,&\textrm{ for any }\alpha\in(0,1),\textrm{ and}\\
\mu_1,&\textrm{ if }\alpha=1.
\end{array}\right.
$$
\item
Let $\overline{\mu}$ the fuzzy subset defined by $\overline{\mu}(x)=1-\mu(x)$, for any $x\in{X}$; the $\alpha$--levels of $\overline{\mu}$ define a decreasing gradual subset $\sigma(\overline{\mu})(\alpha)=\{x\in{X}\mid\;\mu(x)\leq1-\alpha\}$.
\item
\textbf{inverse $\alpha$--level} of $\mu$ as $\mu^\alpha=\{x\in{X}\mid\;\mu(x)\leq\alpha\}$.
\newline
In this case we have a increasing gradual subset $\tau(\mu)$, defined $\tau(\mu)(\alpha)=\mu^\alpha$ for any $\alpha\in(0,1]$
\end{enumerate}
\end{example}

\subsection{{Operators on} gradual subsets}

The following are examples of constructions that can be carried out for any gradual subset, and which will be useful in their study.

Let $\sigma$ be a gradual subset of $X$, associated to $\sigma$ we define two new gradual subsets:
\begin{enumerate}[(1)]
\item
The \textbf{accumulation} $\sigma^c$ of $\sigma$.
\[
\sigma^c(\alpha)=\cup\{\sigma(\beta)\mid\;\alpha\leq\beta\in\dom(\sigma)\},\textrm{ for any }\alpha\in\dom(\sigma).
\]
It is clear that for any gradual subset $\sigma$ the accumulation $\sigma^c$ is a \textbf{decreasing gradual subset}, and a gradual subset $\sigma$ is decreasing if, and only if, $\sigma=\sigma^c$.
\par
For any gradual subset $\sigma$ we have $\sigma\subseteq\sigma^c=\sigma^{cc}$.
\item
The \textbf{strict accumulation} $\sigma^d$ of $\sigma$.
\[
\sigma^d(\alpha)=\left\{\begin{array}{ll}
 \sigma(1),&\textrm{ if }\alpha=1.\\
 \cup\{\sigma(\beta)\mid\;\alpha<\beta\in\dom(\sigma)\},&\textrm{ if }\alpha\in\dom(\sigma)\setminus\{1\}.
 \end{array}\right.
\]
It is clear that for any gradual subset $\sigma$ the strict accumulation $\sigma^d$ is a decreasing gradual subset, and $\sigma^d\subseteq\sigma^c$. In general, $\sigma\nsubseteq\sigma^d$.
\end{enumerate}

Thus, we have an operator, $c$, on gradual subsets: $\sigma\mapsto\sigma^c$. The behaviour of $c$ is reflected in the following lemma.

\begin{lemma}
Let $X$ be a set, for any gradual subsets $\sigma_1,\sigma_2,\sigma$ of $X$ the following statements hold:
\begin{enumerate}[(1)]\sepa
\item
$\sigma\subseteq\sigma^c$.
\item
$\sigma^c=\sigma^{cc}$.
\item
If $\sigma_1\subseteq\sigma_2$, then $\sigma_1^c\subseteq\sigma_2^c$.
\item
$\sigma^c$ is the smallest decreasing gradual subset containing $\sigma$.
\end{enumerate}
\end{lemma}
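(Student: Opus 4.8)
The plan is to verify the four assertions in order, since each is short and the later ones build on the earlier. Part (1) is immediate from the definition: for any $\alpha\in\dom(\sigma)$, taking $\beta=\alpha$ in the union defining $\sigma^c(\alpha)$ shows $\sigma(\alpha)\subseteq\cup\{\sigma(\beta)\mid\alpha\leq\beta\}=\sigma^c(\alpha)$, hence $\sigma\subseteq\sigma^c$.

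For part (3), assume $\sigma_1\subseteq\sigma_2$, i.e. $\sigma_1(\gamma)\subseteq\sigma_2(\gamma)$ for all relevant $\gamma$. Then for each $\alpha$, every set $\sigma_1(\beta)$ appearing in the union $\sigma_1^c(\alpha)=\cup\{\sigma_1(\beta)\mid\alpha\leq\beta\}$ is contained in $\sigma_2(\beta)$, which appears in $\sigma_2^c(\alpha)$; taking unions preserves this, so $\sigma_1^c(\alpha)\subseteq\sigma_2^c(\alpha)$. For part (2), I would note that $\sigma^c$ is decreasing (already observed in the text just before the lemma: a larger union is taken at smaller $\alpha$), and that a gradual subset $\tau$ is decreasing if and only if $\tau=\tau^c$. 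Applying the ``only if'' direction to $\tau=\sigma^c$ gives $\sigma^c=(\sigma^c)^c=\sigma^{cc}$. One could alternatively prove $\sigma^{cc}(\alpha)=\cup\{\sigma^c(\beta)\mid\alpha\leq\beta\}=\cup\{\sigma(\gamma)\mid\beta\leq\gamma,\ \alpha\leq\beta\}=\cup\{\sigma(\gamma)\mid\alpha\leq\gamma\}=\sigma^c(\alpha)$ directly, the middle equality holding because the index set $\{\gamma\mid\exists\beta,\ \alpha\leq\beta\leq\gamma\}$ is exactly $\{\gamma\mid\alpha\leq\gamma\}$.

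For part (4), I would argue in two pieces. First, $\sigma^c$ is a decreasing gradual subset containing $\sigma$ by parts (1) and the observation that $\sigma^c$ is decreasing. Second, if $\tau$ is any decreasing gradual subset with $\sigma\subseteq\tau$, then by part (3) we get $\sigma^c\subseteq\tau^c$, and since $\tau$ is decreasing, $\tau^c=\tau$; hence $\sigma^c\subseteq\tau$. This shows $\sigma^c$ is the smallest such, completing the proof.

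The only genuine subtlety is bookkeeping about domains: when $\sigma$ is a partial gradual subset, $\sigma^c(\alpha)$ is defined for $\alpha\in\dom(\sigma)$ via a union over $\beta\in(0,1]$ with $\alpha\leq\beta$, but the text's definition writes $\sigma(\beta)$ without restricting $\beta$ to $\dom(\sigma)$, so one should read this (consistently with the $\sigma^d$ case, where $\beta\in\dom(\sigma)$ is written explicitly) as a union over $\beta\in[\alpha,1]\cap\dom(\sigma)$; with that reading $\dom(\sigma^c)=\dom(\sigma)$ and the comparisons in (3) and (4) should be understood on the intersection of the relevant domains, exactly as ``$\subseteq$'' between gradual subsets was defined. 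I expect this domain-matching to be the main thing to state carefully; the set-theoretic content of all four parts is routine manipulation of unions.
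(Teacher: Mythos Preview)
Your proof is correct and follows the same approach as the paper: the paper dismisses (1), (2), (3) as ``easy'' and proves (4) exactly as you do, via $\sigma\subseteq\tau\Rightarrow\sigma^c\subseteq\tau^c=\tau$. Your version simply fills in the routine details the paper omits (and adds a careful remark about domains that the paper does not make explicit).
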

\begin{proof}
(1), (2) and (3) are easy.
\par
(4). %
Let $\tau$ be a decreasing gradual subset such that $\sigma\subseteq\tau$, then $\sigma^c\subseteq\tau^c=\tau$.
\end{proof}

This means that the operator $c$ is a closure operator in the set $\mathcal{X}$ of all gradual subsets of $X$.

Remember that a \textbf{closure operator} in a poset ({partial ordered set}) $P$ is a map $c:P\longrightarrow{P}$ satisfying:
\begin{enumerate}[(1)]\sepa
\item
$p\leq{c(p)}$ for any $p\in{P}$.
\item
For any $p_1,p_2\in{P}$ such that $p_1\leq{p_2}$ we have $c(p_1)\leq{c(p_2)}$.
\item
$c(p)=cc(p)$ for any $p\in{P}$.
\end{enumerate}
The elements $p\in{P}$ such that $c(p)=p$ are named the \textbf{$c$--closed} elements. Thus, the gradual subsets which are closed for the operator $c$, are the decreasing gradual subsets. Let us denote by $\mathcal{J}(X)$ the set of all decreasing gradual subsets of $X$.

In the same way, we may consider the operator $d$, defined: $\sigma\mapsto\sigma^d$; its behaviour is reflected in the following lemma.

\begin{lemma}
Let $X$ be a set, for any gradual subsets $\sigma_1,\sigma_2,\sigma$ of $X$ the following statements hold:
\begin{enumerate}[(1)]\sepa
\item
$\sigma^d\subseteq\sigma^c$.
\item
If $\sigma_1\subseteq\sigma_2$, then $\sigma_1^d\subseteq\sigma_2^d$.
\item
$\sigma^d=\sigma^{dd}=\sigma^{cd}=\sigma^{dc}$.
\end{enumerate}
\end{lemma}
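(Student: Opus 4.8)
The plan is to verify each item by a direct pointwise computation, distinguishing the value at $\alpha=1$ from the values at $\alpha\in\dom(\sigma)\setminus\{1\}$, and to reduce the four–term equality in (3) to a single auxiliary ``sandwich'' statement about the operator $d$.

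For (1), at $\alpha=1$ one has $\sigma^{c}(1)=\cup\{\sigma(\beta)\mid 1\le\beta\}=\sigma(1)=\sigma^{d}(1)$, while for $\alpha<1$ the index set $\{\beta\mid\alpha<\beta\}$ of the union defining $\sigma^{d}(\alpha)$ is contained in the index set $\{\beta\mid\alpha\le\beta\}$ of the union defining $\sigma^{c}(\alpha)$, so $\sigma^{d}(\alpha)\subseteq\sigma^{c}(\alpha)$; this merely repeats the observation made just above the lemma. For (2), at $\alpha=1$ the inclusion $\sigma_{1}(1)\subseteq\sigma_{2}(1)$ is the hypothesis $\sigma_{1}\subseteq\sigma_{2}$ evaluated at $1$; for $\alpha<1$, each set $\sigma_{1}(\beta)$ with $\alpha<\beta$ occurring in $\sigma_{1}^{d}(\alpha)$ satisfies $\sigma_{1}(\beta)\subseteq\sigma_{2}(\beta)\subseteq\sigma_{2}^{d}(\alpha)$, so taking unions gives $\sigma_{1}^{d}(\alpha)\subseteq\sigma_{2}^{d}(\alpha)$.

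For (3), first note that $\sigma^{d}$ is a decreasing gradual subset (observed above) and that a gradual subset is decreasing exactly when it equals its accumulation; hence $\sigma^{dc}=(\sigma^{d})^{c}=\sigma^{d}$. For the remaining two equalities I would prove the claim: \emph{if $\tau$ is a gradual subset with $\sigma^{d}\subseteq\tau\subseteq\sigma^{c}$, then $\tau^{d}=\sigma^{d}$}. Granting this, part (1) gives both $\sigma^{d}\subseteq\sigma^{d}\subseteq\sigma^{c}$ and $\sigma^{d}\subseteq\sigma^{c}\subseteq\sigma^{c}$, so taking $\tau=\sigma^{d}$ yields $\sigma^{dd}=\sigma^{d}$ and taking $\tau=\sigma^{c}$ yields $\sigma^{cd}=\sigma^{d}$, which finishes (3). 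To prove the claim, fix $\alpha<1$. Using $\tau\subseteq\sigma^{c}$ and then the definition of $\sigma^{c}$ the double union collapses: $\tau^{d}(\alpha)\subseteq\cup\{\sigma^{c}(\beta)\mid\alpha<\beta\}=\cup\{\sigma(\gamma)\mid\exists\beta,\ \alpha<\beta\le\gamma\}=\cup\{\sigma(\gamma)\mid\alpha<\gamma\}=\sigma^{d}(\alpha)$, the middle equality because one may take $\beta=\gamma$. Using $\sigma^{d}\subseteq\tau$ one gets $\tau^{d}(\alpha)\supseteq\cup\{\sigma^{d}(\beta)\mid\alpha<\beta\}$, which (isolating the term $\beta=1$, equal to $\sigma(1)$) equals $\cup\{\sigma(\gamma)\mid\exists\beta,\ \alpha<\beta<\gamma\}\cup\sigma(1)$; this is $\sigma^{d}(\alpha)$ as soon as every $\gamma$ with $\alpha<\gamma<1$ admits some $\beta$ with $\alpha<\beta<\gamma$, which holds since $(0,1]$ is order–dense, the value $\gamma=1$ being absorbed into the isolated term $\sigma(1)$. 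Finally at $\alpha=1$ the sandwich forces $\sigma(1)=\sigma^{d}(1)\subseteq\tau(1)\subseteq\sigma^{c}(1)=\sigma(1)$, so $\tau^{d}(1)=\tau(1)=\sigma(1)=\sigma^{d}(1)$.

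The only genuinely delicate point is the lower–bound step in the claim. For the accumulation operator $c$ the non–strict inequality lets one take $\beta=\gamma$ and recover $\sigma(\gamma)$ painlessly; for $d$ the strict inequality means $\sigma(\gamma)$ cannot be recovered from $\sigma^{d}(\gamma)$ — indeed $\sigma\nsubseteq\sigma^{d}$ in general — so $\sigma(\gamma)$ must instead be reached through some $\sigma^{d}(\beta)$ with $\alpha<\beta<\gamma$. That is precisely where the order–density of $(0,1]$ is used, together with the separate treatment of the endpoint $\gamma=1$; everything else is routine manipulation of unions.
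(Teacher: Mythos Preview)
Your proof is correct. Parts (1) and (2) match the paper's ``easy'' verdict, and for (3) you reach the same conclusions but organize the argument differently: the paper computes $\sigma^{dd}(\alpha)$ and $\sigma^{cd}(\alpha)$ separately by collapsing the double unions directly, while you factor both through a single sandwich claim ($\sigma^{d}\subseteq\tau\subseteq\sigma^{c}\Rightarrow\tau^{d}=\sigma^{d}$) and then specialize $\tau$. The underlying computation is the same double--union collapse, but your packaging yields a reusable statement and, more importantly, you are explicit about two points the paper glosses over: the special role of the endpoint $\alpha=1$ (where $\sigma^{d}$ is defined by fiat as $\sigma(1)$ rather than by the strict union), and the appeal to order--density of $(0,1]$ needed for the lower inclusion. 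The paper's one--line chain $\cup\{\cup\{\sigma(\gamma)\mid\gamma>\beta\}\mid\beta>\alpha\}=\cup\{\sigma(\beta)\mid\beta>\alpha\}$ silently uses both of these, so your version is the more careful of the two.
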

\begin{proof}
(1) and (2) are easy.
(3). %
Indeed, for any $\alpha\in(0,1]$ we have:
\begin{multline*}
\sigma^{dd}(\alpha)
=\cup\{\sigma^d(\beta)\mid\;\beta>\alpha\}
=\cup\{\cup\{\sigma(\gamma)\mid\;\gamma>\beta\}\mid\;\beta>\alpha\}\\
=\cup\{\sigma(\beta)\mid\;\beta>\alpha\}
=\sigma^d(\alpha).
\end{multline*}
In the same way, for any $\alpha\in(0,1]$ we have:
\begin{multline*}
\sigma^{cd}(\alpha)
=\cup\{\sigma^c(\beta)\mid\;\beta>\alpha\}
=\cup\{\cup\{\sigma(\gamma)\mid\;\gamma\leq\beta\}\mid\;\beta>\alpha\}\\
=\cup\{\sigma(\beta)\mid\;\beta>\alpha\}
=\sigma^d(\alpha).
\end{multline*}
\end{proof}

A gradual subset $\sigma$ is an \textbf{strict decreasing gradual subset} if $\sigma=\sigma^d$. We have:

\begin{lemma}
For any gradual subset $\sigma$ the following statements hold.
\begin{enumerate}[(1)]\sepa
\item
$\sigma^d$ is the smallest strict decreasing gradual subset contained in $\sigma^c$.
\item
$\sigma$ is a decreasing gradual subset non strict decreasing if, and only if, $\sigma^d\subsetneqq\sigma^c$.
\end{enumerate}
\end{lemma}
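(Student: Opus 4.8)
The plan is to prove both parts by exploiting the identities already established for the operators $c$ and $d$, in particular $\sigma^d=\sigma^{dd}=\sigma^{cd}=\sigma^{dc}$ and $\sigma^c=\sigma^{cc}$, together with the monotonicity of both operators and the inclusion $\sigma^d\subseteq\sigma^c$.

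For part (1), I would first note that $\sigma^d$ is indeed a strict decreasing gradual subset, since $(\sigma^d)^d=\sigma^{dd}=\sigma^d$, and that it is contained in $\sigma^c$ by the previous lemma. It remains to show it is the smallest such: let $\tau$ be any strict decreasing gradual subset with $\tau\subseteq\sigma^c$. Applying the operator $d$ and using its monotonicity gives $\tau^d\subseteq(\sigma^c)^d=\sigma^{cd}=\sigma^d$; but $\tau$ is strict decreasing, so $\tau=\tau^d\subseteq\sigma^d$, which is exactly what we want. This is the crux of the argument, and it is short once one recognises that the identity $\sigma^{cd}=\sigma^d$ is doing all the work.

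For part (2), I would argue both implications. If $\sigma$ is decreasing then $\sigma=\sigma^c$, so the claim $\sigma^d\subsetneqq\sigma^c$ becomes $\sigma^d\subsetneqq\sigma$. If moreover $\sigma$ is not strict decreasing, then $\sigma\neq\sigma^d$; combined with $\sigma^d\subseteq\sigma^c=\sigma$ this gives the proper inclusion $\sigma^d\subsetneqq\sigma^c$. Conversely, if $\sigma^d\subsetneqq\sigma^c$, then in particular $\sigma^d\neq\sigma^c$, so $\sigma$ cannot equal $\sigma^c$ while simultaneously equalling $\sigma^d$; since we are told $\sigma$ is decreasing (so $\sigma=\sigma^c$), we conclude $\sigma\neq\sigma^d$, i.e.\ $\sigma$ is not strict decreasing. (Strictly, the statement presupposes $\sigma$ decreasing throughout part (2), so the converse only needs to rule out strictness, which is immediate.)

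I do not anticipate a serious obstacle here; the only mild subtlety is making sure the quantifier ``smallest'' in part (1) is interpreted correctly — namely, smallest among strict decreasing gradual subsets \emph{contained in} $\sigma^c$, not among those containing $\sigma$ — and that the proof of minimality uses $\tau\subseteq\sigma^c$ rather than $\tau\subseteq\sigma$, since in general $\sigma\nsubseteq\sigma^d$ and one should not expect $\sigma^d$ to sit above $\sigma$. Everything else reduces to bookkeeping with the already-proven operator identities.
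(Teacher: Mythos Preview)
Your argument for part (1) is line-for-line the paper's own proof: take $\tau$ strict decreasing with $\tau\subseteq\sigma^c$, apply $d$, and use $\sigma^{cd}=\sigma^d$ to get $\tau=\tau^d\subseteq\sigma^d$. The paper gives no separate argument for part (2), so your treatment there is more detailed but entirely in the same spirit.

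One point worth flagging explicitly: what you have actually proved (and what the paper proves) is that $\sigma^d$ is the \emph{largest} strict decreasing gradual subset contained in $\sigma^c$, not the smallest. The inclusion you derive is $\tau\subseteq\sigma^d$, which makes $\sigma^d$ maximal among such $\tau$; the word ``smallest'' in the statement is a slip, as confirmed by the analogous lemma later in the paper for gradual subgroups, which correctly says ``biggest''. Your closing remark about interpreting ``smallest'' carefully dances around this but does not quite name it; it would be clearer simply to note that the statement should read ``largest'' and that this is what the interior-operator picture predicts (an interior operator produces the largest open element below a given one, dual to how the closure $c$ gave the smallest closed element above).
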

\begin{proof}
Let $\tau$ be a strict decreasing gradual subset such that $\tau\subseteq\sigma^c$, then $\tau=\tau^d\subseteq\sigma^{cd}=\sigma^d$.
\end{proof}

This means that the operator $d$ is an interior operator in the set of all decreasing gradual subsets of $X$.

Remember that an \textbf{interior operator} in a poset $P$ is a map $d:P\longrightarrow{P}$ satisfying:
\begin{enumerate}[(1)]\sepa
\item
$d(p)\leq{p}$ for any $p\in{P}$.
\item
For any $p_1,p_2\in{P}$ such that $p_1\leq{p_2}$ we have $d(p_1)\leq{d(p_2)}$.
\item
$d(p)=dd(p)$ for any $p\in{P}$
\end{enumerate}
The elements $p\in{P}$ such that $d(p)=p$ are named the \textbf{$d$--open} elements. Thus, the decreasing gradual subsets open for the operator $d$ are the strict decreasing gradual subsets. Let us denote by $\mathcal{J}^d(X)$ the set of all $d$--open (strict) decreasing gradual subsets.

\begin{remark}\label{re:20180404}
Inspired in these constructions we consider a new construction of a gradual subset from a partial gradual subset that allows us to avoid the {initial} restriction of inf--compact in the domain of definition of partial gradual elements.

Let $\sigma:(0,1]\longrightarrow{P(X)}$ be a partial map defined at 1, i.e., $1\in\dom(\sigma)\subseteq(0,1]$, and such that $\dom(\sigma)$ is not necessarily inf--compact, we may extend $\sigma$ to all $(0,1]$ simply defining $\sigma(\beta)=\varnothing$ if $\beta\notin\dom(\sigma)$. The decreasing gradual subset associated to $\sigma$ is $\sigma^c:(0,1]\longrightarrow{P(X)}$ defined as:
\[
\sigma^c(\alpha)=\cup\{\sigma(\beta)\mid\;\beta\geq\alpha,\,\beta\in\dom(\sigma)\},\textrm{ for any }\alpha\in(0,1].
\]
\end{remark}

The use of decreasing gradual subsets is due to the fact that gradual subsets are wild structures that one can not be managed, and in which there is no relationship between its components. On the other hand, when studying subsets of a given set, it seems natural to impose some inclusion relationships and that these inclusions should be parameterized by the order relation in $(0,1]$.

\begin{remark}
Observe that we may extend any gradual subset $\sigma$ on $X$ to $\overline{\sigma}$ on the whole interval $[0,1]$, in defining $\overline{\sigma}(\alpha)=\left\{
\begin{array}{lll}
\cup\{\sigma(\beta)\mid\;\alpha\leq\beta\in(0,1]\},&\textrm{ if }\alpha\in(0,1]\\
\cup\{\sigma(\beta)\mid\;\beta\in(0,1]\},&\textrm{ if }\alpha=0
\end{array}\right.$ for any $\alpha\in[0,1]$.
In consequence, we may consider also decreasing gradual subsets as maps from $[0,1]$ to $X$.
\end{remark}

\subsection{The algebra of gradual subsets}\label{pg:20181112}

There is a natural relationship between gradual elements and gradual subsets of a given set $X$. Thus, for any partial gradual element $\varepsilon$ we may define a \textbf{unitary partial gradual subset} $\sigma(\varepsilon)$ as $\sigma(\varepsilon)(\alpha)=\{\varepsilon(\alpha)\}$, for any $\alpha\in\dom(\varepsilon)$. As we point out before, we have $\varepsilon\in\sigma$ if, and only if, $\sigma(\varepsilon)\subseteq\sigma$, for any gradual subset $\sigma$.

In the set $\mathcal{P}(X)$ there are two operations: the intersection and the union; thus, we can translate these two operations to gradual subsets, as did in the first section.
Following this line we define, for any gradual subsets, $\sigma_1$ and $\sigma_2$:
\begin{enumerate}[(1)]\sepa
\item
the \textbf{intersection}, $\sigma_1\cap\sigma_2$, as $(\sigma_1\cap\sigma_2)(\alpha)=\sigma_1(\alpha)\cap\sigma_2(\alpha)$, for any $\alpha\in\dom(\sigma_1)\cap\dom(\sigma_2)$,
\item
the \textbf{union}, $\sigma_1\cup\sigma_2$, as $(\sigma_1\cup\sigma_2)(\alpha)=\sigma_1(\alpha)\cup\sigma_2(\alpha)$, for any $\alpha\in\dom(\sigma_1)\cap\dom(\sigma_2)$.
\end{enumerate}

In this way we may consider the algebra of gradual subsets of a given set $X$ with respect to intersection and union.

The definition of intersection and union can also be extended to arbitrary families of gradual subsets. Let $\{\sigma_i\mid\;i\in{I}\}$ be a family of gradual subsets,
\begin{enumerate}[(1)]\sepa
\item
the \textbf{intersection} $\cap_i\sigma_i$, defined as: $(\cap_i\sigma_i)(\alpha)=\cap_i\sigma_i(\alpha)$, for any $\alpha\in(0,1]$.
\item
the \textbf{union} $\cup_i\sigma_i$, defined as: $(\cup_i\sigma_i)(\alpha)=\cup_i\sigma_i(\alpha)$, for any $\alpha\in(0,1]$.
\end{enumerate}

Let $\{\mu_i\mid\;i\in{I}\}$ be a family of fuzzy subsets of a set $X$, the union, $\vee_i\mu_i$, and the intersection, $\wedge_i\mu_i$, are the fuzzy subsets defined by:
\[
\begin{array}{ll}
(\vee_i\mu_i)(a)=\vee_i\mu_i(a)=\Sup\{\mu_i(a)\mid\;i\in{I}\},\textrm{ for any }a\in{X},\\
(\wedge_i\mu_i)(a)=\wedge_i\mu_i(a)=\Inf\{\mu_i(a)\mid\;i\in{I}\},\textrm{ for any }a\in{X}.
\end{array}
\]

{\begin{example}\label{ex:20180313}
Let $X=\{a,b\}$ be a set, for any $n\in\mathbb{N}\setminus\{0,1\}$ we define $\mu_n:\{a,b\}\longrightarrow[0,1]$ by $\mu_n(a)=1$, and $\mu_n(b)=\frac{1}{2}-\frac{1}{2^n}$. We have:
\begin{enumerate}[(1)]\sepa
\item
$(\vee_n\mu_n)(a)=1$ and $(\vee_n\mu_n)(b)=\frac{1}{2}$,
\item
$\sigma(\vee_n\mu_n)(\delta)=\left\{\begin{array}{ll}
X,&\textrm{ if }\delta>\frac{1}{2},\\
\{a\},&\textrm{ if }\delta\leq\frac{1}{2},
\end{array}\right.$
\item
$(\cup_n\sigma(\mu_n))(\delta)=\left\{\begin{array}{ll}
X,&\textrm{ if }\delta\geq\frac{1}{2},\\
\{a\},&\textrm{ if }\delta<\frac{1}{2},
\end{array}\right.$
\end{enumerate}
Which shows that the inclusion $\sigma(\vee_n\mu_n)\supseteq\cup_n\sigma(\mu_n)$ is proper.
\end{example}}

In the same line we have a similar situation for $\widetilde{\sigma}$ and the intersection.

{\begin{example}\label{ex:20180313b}
Let $X=\{a,b\}$ be a set, for any $n\in\mathbb{N}\setminus\{0,1\}$ we define $\mu_n:\{a,b\}\longrightarrow[0,1]$ by $\mu_n(a)=1$, and $\mu_n(b)=\frac{1}{2}+\frac{1}{2^n}$. We have:
\begin{enumerate}[(1)]\sepa
\item
$(\wedge_n\mu_n)(a)=1$ and $(\vee_n\mu_n)(b)=\frac{1}{2}$,
\item
$\widetilde{\sigma}(\wedge_n\mu_n)(\delta)=\left\{\begin{array}{ll}
X,&\textrm{ if }\delta\geq\frac{1}{2},\\
\{a\},&\textrm{ if }\delta<\frac{1}{2},
\end{array}\right.$
\item
$(\cap_n\widetilde{\sigma}(\mu_n))(\delta)=\left\{\begin{array}{ll}
X,&\textrm{ if }\delta>\frac{1}{2},\\
\{a\},&\textrm{ if }\delta\leq\frac{1}{2},
\end{array}\right.$
\end{enumerate}
Which shows that the inclusion $\widetilde{\sigma}(\wedge_n\mu_n)\subseteq\cap_n\widetilde{\sigma}(\mu_n)$ is proper.
\end{example}}

In the set $\mathcal{J}^d(X)$ of all strict decreasing gradual subsets of $X$ we define two new operations: intersection:
$\underline{\wedge}_i\sigma_i=(\cap_i\sigma_i)^d$, and maintain the old union: $\underline{\vee}_i\sigma_i=\cap_i\sigma_i$, for every family $\{\sigma_i\mid\;i\in{I}\}$ of strict decreasing gradual subsets of $X$. With these definition we have:

\begin{proposition}\label{pr:20180416}
The union and intersection of strict decreasing gradual subsets are compatible with the union and intersection of fuzzy subsets via the gradual subset $\widetilde{\sigma}(\mu)$, i.e., for any family of fuzzy subsets $\{\mu_i\mid\;i\in{I}\}$ we have:
\begin{enumerate}[(1)]\sepa
\item
$\underline{\vee}_i\widetilde{\sigma}(\mu_i)=\widetilde{\sigma}(\vee_i\mu_i)$.
\item
$\underline{\wedge}_i\widetilde{\sigma}(\mu_i)=\widetilde{\sigma}(\wedge_i\mu_i)$.
\end{enumerate}
\end{proposition}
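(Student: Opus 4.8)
The plan is to verify each of the two identities levelwise: two (strict decreasing) gradual subsets coincide exactly when they take the same value at every $\alpha\in(0,1]$, and in both (1) and (2) the argument at an intermediate level $\alpha\in(0,1)$ is of a slightly different nature than the one at the top level $\alpha=1$, so I would treat these two cases separately. Before that I would record that both members of each equation do lie in $\mathcal{J}^d(X)$: the right--hand sides have the shape $\widetilde{\sigma}(\nu)=\sigma(\nu)^d$ and are thus $d$--closed by the computation rules for the operator $d$; the left--hand side of (2) is again of the shape $(\;\cdot\;)^d$, hence $d$--closed; and the componentwise union of a family of strict decreasing gradual subsets is again strict decreasing, a one--line check from $\sigma_i=\sigma_i^d$ together with $\sigma_i(1)\subseteq\sigma_i(\beta)$ for $\beta<1$. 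So the operations $\underline{\vee}$ and $\underline{\wedge}$ are well defined on $\mathcal{J}^d(X)$.

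For (1) put $\nu=\vee_i\mu_i$, so $\nu(x)=\Sup\{\mu_i(x)\mid i\in I\}$. At a level $\alpha\in(0,1)$ we have $\widetilde{\sigma}(\nu)(\alpha)=\{x\in X\mid\nu(x)>\alpha\}$, whereas $(\underline{\vee}_i\widetilde{\sigma}(\mu_i))(\alpha)=\cup_i\{x\in X\mid\mu_i(x)>\alpha\}=\{x\in X\mid\exists\,i,\ \mu_i(x)>\alpha\}$; these sets agree because a supremum of reals exceeds $\alpha$ precisely when one of the terms does. The level $\alpha=1$ is the delicate one: $\widetilde{\sigma}(\nu)(1)=\nu_1=\{x\in X\mid\nu(x)=1\}$, and since the supremum need not be attained this may be strictly larger than $\cup_i(\mu_i)_1$. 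The resolution is that the value of $\underline{\vee}_i\widetilde{\sigma}(\mu_i)$ at $1$ is the one forced by $d$--closedness, namely $\cap_{0<\gamma<1}\cup_i\{x\in X\mid\mu_i(x)>\gamma\}$, and $x$ belongs to this intersection exactly when for every $\gamma<1$ some $\mu_i(x)>\gamma$, i.e. exactly when $\nu(x)=1$; so the two sides agree at $1$ as well.

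For (2) put $\nu=\wedge_i\mu_i$, so $\nu(x)=\Inf\{\mu_i(x)\mid i\in I\}$, and recall $\underline{\wedge}_i\widetilde{\sigma}(\mu_i)=(\cap_i\widetilde{\sigma}(\mu_i))^d$. First I would compute the inner componentwise intersection: for $\beta\in(0,1)$, $(\cap_i\widetilde{\sigma}(\mu_i))(\beta)=\{x\in X\mid\forall\,i,\ \mu_i(x)>\beta\}$, a set that merely contains $\{x\in X\mid\nu(x)>\beta\}=\widetilde{\sigma}(\nu)(\beta)$ and may contain it strictly --- this overshoot is exactly the phenomenon displayed in the example immediately preceding the proposition, and it is what the operator $d$ is there to correct. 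Now, at $\alpha\in(0,1)$, applying $d$ and discarding the value at $\beta=1$ (which is $\cap_i(\mu_i)_1$, already contained in every term with $\beta<1$), we get $(\cap_i\widetilde{\sigma}(\mu_i))^d(\alpha)=\cup_{\alpha<\beta<1}\{x\in X\mid\forall\,i,\ \mu_i(x)>\beta\}=\{x\in X\mid\exists\,\beta\in(\alpha,1),\ \nu(x)\geq\beta\}=\{x\in X\mid\nu(x)>\alpha\}=\widetilde{\sigma}(\nu)(\alpha)$, where the passage between ``$\mu_i(x)>\beta$ for all $i$, for some $\beta>\alpha$'' and ``$\nu(x)>\alpha$'' uses that the order on $(0,1]$ is dense. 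At $\alpha=1$, $(\cap_i\widetilde{\sigma}(\mu_i))^d(1)=(\cap_i\widetilde{\sigma}(\mu_i))(1)=\cap_i(\mu_i)_1=\{x\in X\mid\forall\,i,\ \mu_i(x)=1\}=\{x\in X\mid\nu(x)=1\}=\nu_1=\widetilde{\sigma}(\nu)(1)$, where this time the infimum is automatically attained as soon as it equals $1$.

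I expect the main obstacle in both parts to be the behaviour at the top level $\alpha=1$: because suprema and infima of the families $\{\mu_i(x)\mid i\in I\}$ need not be attained, one cannot simply union or intersect the sets $\widetilde{\sigma}(\mu_i)(1)=(\mu_i)_1$. For the union this is why the value of $\underline{\vee}$ at $1$ has to be recovered from the levels below $1$ through $d$--closedness; for the intersection the operator $d$ carries out the corresponding correction one level at a time, and the exactness of its trimming, at each $\beta<1$, of $\{x\mid\forall\,i,\ \mu_i(x)>\beta\}$ down to the strong $\beta$--level of $\wedge_i\mu_i$ rests again on the density of the order of $(0,1]$. Once these points are in place, the rest is the routine levelwise set bookkeeping indicated above.
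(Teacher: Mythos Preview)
Your argument for levels $\alpha\in(0,1)$ in (1) is exactly the paper's one--line computation, and your treatment of (2) --- actually carrying out the $d$--step and invoking density of $(0,1]$ --- is more careful than the paper's, which simply says ``in the same way''. That part is fine.

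The genuine gap is your handling of (1) at $\alpha=1$. You correctly observe that $\cup_i(\mu_i)_1$ can be strictly smaller than $\{x\mid\sup_i\mu_i(x)=1\}$, but your proposed resolution --- that ``$d$--closedness forces'' the value of $\underline{\vee}_i\widetilde{\sigma}(\mu_i)$ at $1$ to be $\cap_{\gamma<1}\cup_i\{x\mid\mu_i(x)>\gamma\}$ --- is wrong under the paper's definitions. The operator $d$ is defined with $\sigma^d(1)=\sigma(1)$ for every $\sigma$, so the condition $\sigma=\sigma^d$ is vacuous at the top level and cannot force anything there; and $\underline{\vee}$ is declared to be the plain componentwise union, so $(\underline{\vee}_i\widetilde{\sigma}(\mu_i))(1)=\cup_i(\mu_i)_1$ and nothing else. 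Your own observation then produces a counterexample (take $X=\{b\}$, $\mu_n(b)=1-\tfrac{1}{n}$: the left side at $1$ is $\varnothing$, the right side is $\{b\}$). The paper's proof does not confront this either --- it applies the formula $\{a\mid\mu(a)>\alpha\}$ uniformly at $\alpha=1$, which contradicts its own definition $\widetilde{\sigma}(\mu)(1)=\mu_1$ --- so this is a defect in the source rather than a failure of your reading; but your attempted repair does not work, and with the definitions as written identity~(1) holds only on $(0,1)$.
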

\begin{proof}
For any $\alpha\in(0,1]$ we have:
\[
\begin{array}{ll}
\widetilde{\sigma}(\vee_i\mu_i)(\alpha)
&=\{a\in{X}\mid\;(\vee_i\mu_i)(a)>\alpha\}\\
&=\{a\in{X}\mid\;\vee_i\mu_i(a)>\alpha\}\\
&=\cup\{a\in{X}\mid\;\textrm{there exists $i$ such that }\mu_i(a)>\alpha\}\\
&=(\cup_i\widetilde{\sigma}(\mu_i))(\alpha)=(\underline{\vee}_1\widetilde{\sigma}(\mu_i))(\alpha).
\end{array}
\]
In the same way we can prove the case of $\widetilde{\sigma}(\wedge_i\mu_i)(\alpha)$.
\end{proof}

From this point of view strict $\alpha$--levels should be a suitable tool for studying the algebra of fuzzy subsets via decreasing gradual subsets.

\subsection{Maps}

In order to relate two gradual subsets, a standard method consists in defining a map from one to the other. In this context first we consider a map between the underlying sets containing each gradual subset; the following result show how to associate gradual subsets to gradual subsets via a map.

\begin{lemma}[Direct image]
Let $f:X\longrightarrow{Y}$ be a map, and denote by $f$ the induced map from $\mathcal{P}(X)$ to $\mathcal{P}(Y)$, the following statements hold:
\begin{enumerate}[(1)]\sepa
\item
For every gradual element $\varepsilon$ of $X$ we have that $f\varepsilon:\dom(\varepsilon)\longrightarrow{Y}$ is a gradual element of $Y$.
\item
Let $\sigma$ be a gradual subset of $X$, then $f\sigma:\dom(\sigma)\longrightarrow\mathcal{P}(Y)$ is a gradual subset of $Y$. And we have a map $f_*:\mathcal{X}\longrightarrow\mathcal{Y}$ defined $f_*(\sigma)=f\sigma$ for any $\sigma\in\mathcal{X}$.
\item
Let $\sigma$ be a gradual subset of $X$, then $f_*(\sigma^c)=(f_*(\sigma))^c$.
\end{enumerate}
In addition, if $\varepsilon\in\sigma$, then $f\varepsilon\in{f\sigma}$.
\end{lemma}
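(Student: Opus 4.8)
The plan is to verify each of the three itemized claims together with the final assertion, all of which follow directly from working pointwise in $(0,1]$. The key observation throughout is that every construction here (direct image of a gradual element, direct image of a gradual subset, the accumulation operator $c$, membership) is defined coordinate-by-coordinate, so the statements reduce to elementary facts about the set-theoretic direct image $f$ and the fact that $f$ commutes with arbitrary unions.

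For (1), given a gradual element $\varepsilon$ of $X$ with domain $\dom(\varepsilon)\subseteq(0,1]$ containing $1$, the composite $f\varepsilon:\dom(\varepsilon)\longrightarrow Y$ is again a map defined on the same inf--compact set containing $1$, hence a gradual element of $Y$; there is essentially nothing to prove beyond observing that composition of maps is a map. For (2), if $\sigma:\dom(\sigma)\longrightarrow\mathcal{P}(X)$ is a gradual subset, define $(f\sigma)(\alpha)=f(\sigma(\alpha))=\{f(x)\mid x\in\sigma(\alpha)\}\in\mathcal{P}(Y)$ for each $\alpha\in\dom(\sigma)$; this is a map from $\dom(\sigma)$ to $\mathcal{P}(Y)$, i.e.\ a gradual subset of $Y$, and setting $f_*(\sigma)=f\sigma$ gives the desired map $f_*:\mathcal{X}\longrightarrow\mathcal{Y}$. (Here I am reading $\mathcal{X}$, $\mathcal{Y}$ as the sets of gradual subsets, consistently with Section~2.)

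The only step that requires a small argument is (3), namely $f_*(\sigma^c)=(f_*(\sigma))^c$, and the heart of it is that $f$ preserves arbitrary unions: $f\bigl(\cup_{\beta\geq\alpha}\sigma(\beta)\bigr)=\cup_{\beta\geq\alpha}f(\sigma(\beta))$. Concretely, for any $\alpha\in\dom(\sigma)$ I would compute
\[
f_*(\sigma^c)(\alpha)=f\bigl(\sigma^c(\alpha)\bigr)=f\Bigl(\bigcup_{\alpha\leq\beta\in(0,1]}\sigma(\beta)\Bigr)=\bigcup_{\alpha\leq\beta\in(0,1]}f\bigl(\sigma(\beta)\bigr)=\bigcup_{\alpha\leq\beta\in(0,1]}(f_*\sigma)(\beta)=(f_*\sigma)^c(\alpha),
\]
where the third equality is the union-preservation property of $f$. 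Since this holds for every $\alpha$, the two gradual subsets coincide. Finally, for the closing assertion, suppose $\varepsilon\in\sigma$, i.e.\ $\varepsilon(\alpha)\in\sigma(\alpha)$ for every $\alpha\in\dom(\varepsilon)\cap\dom(\sigma)$; then applying $f$ gives $f(\varepsilon(\alpha))\in f(\sigma(\alpha))$, that is $(f\varepsilon)(\alpha)\in(f\sigma)(\alpha)$ for every such $\alpha$, and since $\dom(f\varepsilon)=\dom(\varepsilon)$ and $\dom(f\sigma)=\dom(\sigma)$ this is exactly $f\varepsilon\in f\sigma$. I do not anticipate any genuine obstacle; the one place to be slightly careful is making sure the domains match up under composition and that the union indexing set in the definition of $\sigma^c$ is the same on both sides, which it is because $f$ acts on values, not on the parameter $\alpha$.
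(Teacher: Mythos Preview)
Your proof is correct. The paper states this lemma without proof, treating all parts as immediate; your pointwise verification---in particular the use of $f\bigl(\cup_\beta\sigma(\beta)\bigr)=\cup_\beta f(\sigma(\beta))$ for part~(3)---is exactly the argument the paper leaves implicit, so there is nothing to add.
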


\begin{lemma}[Inverse image]
Let $f:X\longrightarrow{Y}$ be a map, and denote by $f^{-1}:\mathcal{P}(Y)\longrightarrow\mathcal{P}(X)$ the induced map. For any gradual subset $\tau$ of $Y$ we have that $f^*\tau:\dom(\tau)\longrightarrow\mathcal{P}(X)$, defined as $f^*\tau(\alpha)=f^{-1}(\tau(\alpha))$, for any $\alpha\in\dom(\tau)$, is a partial gradual subset of $X$. Thus, we have a map $f^*:\mathcal{Y}\longrightarrow\mathcal{X}$, defined $f^*(\tau)=f^*\tau$, for any $\tau\in\mathcal{Y}$.
\par
In particular, for any gradual subset $\tau$ of $Y$, we have: $f^*(\tau^c)=(f^*(\tau))^c$.
\end{lemma}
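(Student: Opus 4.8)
The plan is to dispatch the two assertions in turn; the first is a formality and the second rests on one elementary property of preimages.

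First I would check that $f^*\tau$ is a partial gradual subset of $X$. For each $\alpha\in\dom(\tau)$ the set $\tau(\alpha)$ is a subset of $Y$, so $f^{-1}(\tau(\alpha))$ is a subset of $X$; hence $f^*\tau$ is a well defined map $\dom(\tau)\longrightarrow\mathcal{P}(X)$. Since $\dom(f^*\tau)=\dom(\tau)$, the domain still contains $1$ and is still inf--compact (this being the standing hypothesis on $\tau$), so $f^*\tau$ meets all the requirements imposed on partial gradual subsets, and the assignment $\tau\mapsto f^*\tau$ gives a well defined map $f^*:\mathcal{Y}\longrightarrow\mathcal{X}$.

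For the identity $f^*(\tau^c)=(f^*(\tau))^c$, observe first that both sides are gradual subsets with the same domain, namely $\dom(\tau^c)=\dom(\tau)=\dom(f^*\tau)$, because neither the accumulation operator $c$ nor the inverse-image construction alters the domain of definition. Thus it suffices to compare their values at an arbitrary $\alpha$ in this common domain. Using the definition of $\tau^c$ and the fact that taking preimages under $f$ commutes with arbitrary unions, I would compute
\[
f^*(\tau^c)(\alpha)
=f^{-1}(\tau^c(\alpha))
=f^{-1}\left(\cup\{\tau(\beta)\mid\alpha\leq\beta\}\right)
=\cup\{f^{-1}(\tau(\beta))\mid\alpha\leq\beta\}
=\cup\{f^*\tau(\beta)\mid\alpha\leq\beta\}
=(f^*\tau)^c(\alpha),
\]
which is the claimed equality.

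There is no real obstacle here: the only non-formal ingredient is the standard fact $f^{-1}(\cup_i A_i)=\cup_i f^{-1}(A_i)$, and the rest is bookkeeping about matching domains. (The analogous direct-image statement $f_*(\sigma^c)=(f_*(\sigma))^c$ is proved the same way, using instead that the direct image along $f$ commutes with unions.)
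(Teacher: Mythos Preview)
Your argument is correct and is exactly the natural one: the paper itself states this lemma without proof, and what you wrote (well-definedness plus the identity $f^{-1}(\cup_iA_i)=\cup_if^{-1}(A_i)$ applied pointwise in $\alpha$) is the intended verification.
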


Since every element of $X$ and every element of $Y$ are gradual elements, and the same for gradual subsets, the notions of injective map and surjective map, applied either to gradual elements or gradual subsets are equivalent. In the case of gradual subsets we have:

\begin{lemma}
Let $f:X\longrightarrow{Y}$ be a map, then:
\begin{enumerate}[(1)]\sepa
\item
$f$ is injective if, and only if, {$f^*\circ{f_*}=\id_{\mathcal{X}}$}.
\item
$f$ is surjective if, and only if, {$f_*\circ{f^*}=\id_{\mathcal{Y}}$}.
\end{enumerate}
\end{lemma}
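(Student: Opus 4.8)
The plan is to prove both equivalences by unwinding the definitions of $f_*$ and $f^*$ on gradual subsets, reducing everything to the classical facts about direct and inverse images of ordinary subsets. Recall that $f_*(\sigma)(\alpha) = f(\sigma(\alpha))$ and $f^*(\tau)(\alpha) = f^{-1}(\tau(\alpha))$ for every $\alpha\in(0,1]$, so that $(f^*f_*\sigma)(\alpha) = f^{-1}(f(\sigma(\alpha)))$ and $(f_*f^*\tau)(\alpha) = f(f^{-1}(\tau(\alpha)))$; thus both identities are tested level by level, and the question becomes whether the classical set-theoretic identities $f^{-1}(f(S)) = S$ (for all $S\subseteq X$) and $f(f^{-1}(T)) = T$ (for all $T\subseteq Y$) characterize injectivity and surjectivity respectively. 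This is standard, so the real content of the lemma is just the observation that ``testing on all subsets'' and ``testing on all gradual subsets at every level'' amount to the same thing.

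For part (1), first I would prove the forward direction: if $f$ is injective, then for every subset $S\subseteq X$ one has $f^{-1}(f(S)) = S$, hence $(f^*f_*\sigma)(\alpha) = f^{-1}(f(\sigma(\alpha))) = \sigma(\alpha)$ for all $\alpha$, i.e. $f^*f_* = \id_{\mathcal{X}}$. For the converse, I would use the identification of elements $a\in X$ with gradual elements $\varepsilon_a$, and hence the associated unitary gradual subsets $\sigma(\varepsilon_a)$ with $\sigma(\varepsilon_a)(1) = \{a\}$; applying $f^*f_* = \id$ to $\sigma(\varepsilon_a)$ at level $1$ gives $f^{-1}(\{f(a)\}) = \{a\}$ for every $a\in X$, which immediately forces $f(a) = f(b) \Rightarrow a = b$. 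Part (2) is entirely dual: if $f$ is surjective then $f(f^{-1}(T)) = T$ for all $T\subseteq Y$, giving $f_*f^* = \id_{\mathcal{Y}}$; conversely, applying $f_*f^* = \id$ to the constant gradual subset $\tau$ with $\tau(\alpha) = \{y\}$ (or to $\tau$ with $\tau(1) = Y$, $\tau(\alpha)=\{y\}$ otherwise) at level $1$ yields $f(f^{-1}(\{y\})) = \{y\}$, so $f^{-1}(\{y\})\neq\varnothing$ for every $y\in Y$, i.e. $f$ is onto.

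There is no real obstacle here; the only thing that requires a little care is the converse directions, where one must exhibit a specific gradual subset whose value at some level is a single point (or, for surjectivity, a single point together with a cofinal copy of the whole set) in order to back out the pointwise set-theoretic statement from the gradual identity. Since every ordinary subset of $X$ arises as $\sigma(\alpha)$ for a suitable (e.g.\ constant) gradual subset $\sigma$, one could equally well phrase the argument as: $f^*f_* = \id_{\mathcal{X}}$ holds if and only if $f^{-1}(f(S)) = S$ for every $S\subseteq X$, and then invoke the classical characterizations directly. I would present the proof in the latter, cleaner form, noting in passing that the constant gradual subsets already suffice to reduce to the crisp case.
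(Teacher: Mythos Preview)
Your argument is correct and matches the paper's treatment: the paper does not give a formal proof of this lemma but only the one-line remark preceding it, namely that ordinary elements and subsets of $X$ sit inside the gradual ones, so injectivity and surjectivity mean the same thing at either level. Your level-by-level reduction to the classical identities $f^{-1}(f(S))=S$ and $f(f^{-1}(T))=T$, together with the observation that every subset of $X$ occurs as a value of some (constant) gradual subset, is exactly the content of that remark made explicit.
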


Our aim will be to establish maps between gradual sets instead of between gradual subsets, i.e., leave out the ground set $X$ and use only the subsets $\{\sigma(\alpha)\mid\;\alpha\in(0,1]\}$. But we postpone it until the point in which we change the paradigm introducing these gradual sets.

\begin{remark}\label{re:2018110907}
A gradual subset $\sigma$ of a set $X$ is just a family $\{\sigma(\alpha)\mid\;\alpha\in(0,1]\}$ of subsets, indexed in $(0,1]$. There are particular types of gradual subsets, as decreasing gradual subsets, in which, for any $\alpha,\beta\in(0,1]$, $\alpha\leq\beta$, there exists a map $j_{\alpha,\beta}:\sigma(\beta)\longrightarrow\sigma(\alpha)$: the inclusion, satisfying $j_{\alpha,\beta}j_{\beta,\gamma}=j_{\alpha,\gamma}$ whenever $\alpha\leq\beta\leq\gamma$. In some sense, decreasing gradual subsets are gradual subsets enriched with a family of maps $\{j_{\alpha,\beta}\mid\;\alpha,\beta\in(0,1],\alpha\leq\beta\}$ satisfying the above conditions and compatible with the inclusions in $X$. Thus, we may work with these enriched gradual subsets of $X$.

An \textbf{enriched gradual subsets} of $X$ is a gradual subset $\sigma$ together with a family of maps $\{f_{\alpha,\beta}\mid\;\alpha,\beta\in(0,1],\alpha\leq\beta\}$ satisfying:
\begin{enumerate}[(1)]\sepa
\item
$f_{\alpha,\beta}:\sigma(\beta)\longrightarrow\sigma(\alpha)$,
\item
$f_{\alpha,\beta}f_{\beta,\gamma}=f_{\alpha,\gamma}$ whenever $\alpha\leq\beta\leq\gamma$,
\item\label{it:20181109-3}
if $j_\alpha:\sigma(\alpha)\longrightarrow{X}$ is the inclusion, for any $\alpha\in(0,1]$, then $j_\alpha{f_{\alpha,\beta}}=j_\beta$, whenever $\alpha\leq\beta$.
\end{enumerate}
Observe that, as a consequence of \eqref{it:20181109-3}, each $f_{\alpha,\beta}$ is an injective map. In particular, enriched gradual subsets are just the decreasing gradual subsets. See also Remark~\eqref{re:20181109}.
\end{remark}

\subsection{{Gradual} quotient sets}

The same technique we used to introduce gradual subsets can be applied to define quotient gradual sets of a given set $X$.

Remember that if $X$ is a set, a \textbf{subset} $S\subseteq{X}$ is an equivalence class in the class of all injective maps $\{(i,Y)\mid\;i:Y\longrightarrow{X}\textrm{ injective}\}$, whenever we consider the equivalence relation: $(i_1,Y_1)\sim(i_2,Y_2)$ if there exists a bijective map $b:Y_1\longrightarrow{Y_2}$ such that $i_1=i_2b$.
\[
\begin{xy}
\xymatrix{
Y_1\ar[rrd]^{i_1}\ar[d]^b\\
Y_2\ar[rr]_{i_2}&&X
}\end{xy}
\]
Dually, a \textbf{quotient set} of $X$ is an equivalence class in the class of all surjective maps $\{(Z,p)\mid\;p:X\longrightarrow{Z}\textrm{ surjective}\}$ when we consider the equivalence relation: $(Z_1,p_1)\sim(Z_2,p_2)$ if there exists a bijective map $b:Z_1\longrightarrow{Z_2}$ such that $p_2=bp_1$.
\[
\begin{xy}
\xymatrix{
X\ar[rr]^{p_1}\ar[rrd]_{p_2}&&Z_1\ar[d]^b\\
&&Z_2
}\end{xy}
\]

The set of all subsets of $X$ is represented by $\mathcal{P}(X)$, and there exists a bijective correspondence between $\mathcal{P}(X)$ and $2^X$. The set of all quotient set of $X$ will be represented by $\mathcal{Q}(X)$, and for any element $Z\in\mathcal{Q}(X)$ we have:
\begin{enumerate}[(1)]\sepa
\item
a surjective map $p:X\longrightarrow{Z}$,
\item
an equivalence relation $R_p$ in $X$ defined as $x{R_p}y$ if $p(x)=p(y)$, and
\item
a partition of $X$ into the equivalence classes defined by a relation $R$.
\end{enumerate}

Each equivalence relation $R$ in $X$ is a subset of $X\times{X}$ satisfying the properties reflexive, symmetric and transitive. Hence $\mathcal{Q}(X)$ is in bijection with a subset of $\mathcal{P}(X\times{X})$. If we call $\mathcal{Q}(X\times{X})$ this subset, it is constituted by all the equivalence relations in $X$.

A \textbf{gradual quotient set} of $X$ is a gradual element of $\mathcal{Q}(X)$, or equivalently, of $\mathcal{Q}(X\times{X})$. We represent by $\rho$ a gradual quotient set of $X$.

\subsection{Gradual subsets and fuzzy subsets}

As an example of application of the gradual subset theory let us to establish a correspondence between fuzzy subsets and enriched gradual subsets. As we had shown before, see Proposition~\eqref{pr:20180416}, if we consider the strict decreasing gradual subset $\widetilde{\sigma}(\mu)$, the correspondence $\mu\mapsto\widetilde{\sigma}(\mu)$ is a homomorphism with respect to arbitrary union and intersection.

In addition, the gradual subsets $\sigma(\mu)$ and $\widetilde{\sigma}(\mu)$ are related in a strong way: $\widetilde{\sigma}(\mu)=\sigma(\mu)^d\subseteq\sigma(\mu)^c=\sigma(\mu)$, using the interior and closure operator. Also, these gradual subsets satisfy the following properties:
\begin{enumerate}[(1)]\sepa\label{pa:20180417}
\item
$\mu(x)=\Max\{\alpha\mid\;x\in\sigma(\mu)(\alpha)\}$, for any $x\in{X}$, and
\item
$\mu(x)=\Inf\{\alpha\mid\;x\notin\widetilde{\sigma}(\mu)(\alpha)\}$, for any $x\in{X}$.
\end{enumerate}

We say
\begin{enumerate}[(1)]\sepa
\item
a decreasing gradual subset $\sigma$ satisfies \textbf{property (F)} if there exists $\Max\{\alpha\mid\;x\in\sigma(\alpha)\}$ for every $x\in\cup\{\alpha\mid\;\alpha\in(0,1]\}$, and
\item
a strict decreasing gradual subset $\sigma$ satisfies satisfies \textbf{property (inf--F)} if $\Inf\{\alpha\mid\;x\notin\sigma^d(\alpha)\}=\beta$ satisfies $x\in\sigma(\beta)$, for every $x\in\cup\{\alpha\mid\;\alpha\in(0,1]\}$.
\end{enumerate}

As a consequence we have the following result:

\begin{lemma}
Let $\sigma$ be a decreasing gradual subset, not strict decreasing gradual subset, the following statements are equivalent:
\begin{enumerate}[(a)]\sepa
\item
$\sigma$ satisfies property (F).
\item
$\sigma^d$ satisfies property (inf--F).
\item
$\cup_{\alpha\in(0,1]}\sigma(\alpha)=\stackrel{\bullet}{\cup}_{\alpha\in(0,1]}(\sigma^c(\alpha)\setminus\sigma^d(\alpha))$ (the disjoint union).
\end{enumerate}
\end{lemma}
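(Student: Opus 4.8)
The strategy is to unwind the definitions of properties (F) and (inf--F) and the operators $c$, $d$, and to show that all three conditions are just different ways of saying that the ``gap'' $\sigma^c(\alpha)\setminus\sigma^d(\alpha)$ is where each element of the ground support attains its maximal level. Fix a decreasing gradual subset $\sigma$ that is not strict decreasing, and write $U=\cup_{\alpha\in(0,1]}\sigma(\alpha)=\cup_{\alpha\in(0,1]}\sigma^c(\alpha)$ (the equality holds since $\sigma$ is decreasing, so $\sigma=\sigma^c$). The key elementary observation, to be established first, is that for each $x\in U$ and each $\beta\in(0,1]$ one has $x\in\sigma^c(\beta)$ iff there is some $\gamma\geq\beta$ with $x\in\sigma(\gamma)$, and $x\in\sigma^d(\beta)$ iff there is some $\gamma>\beta$ with $x\in\sigma(\gamma)$. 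Consequently, setting $A_x=\{\alpha\mid x\in\sigma(\alpha)\}=\{\alpha\mid x\in\sigma^c(\alpha)\}$ (again using $\sigma=\sigma^c$), the set $A_x$ is a down-closed subset of $(0,1]$, and $x\in\sigma^c(\beta)\setminus\sigma^d(\beta)$ holds precisely when $\beta=\max A_x$ exists and equals $\beta$; otherwise, if $\sup A_x\in A_x$ is attained it is the unique such $\beta$, and if it is not attained there is no such $\beta$ at all.

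With that dictionary in hand I would prove the cyclic implications. For (a)$\Rightarrow$(b): if $\sigma$ satisfies (F), then for every $x\in U$ the value $\beta_x:=\max A_x$ exists; one checks that $\Inf\{\alpha\mid x\notin\sigma^d(\alpha)\}$ equals this $\beta_x$ (the set $\{\alpha\mid x\notin\sigma^d(\alpha)\}$ is exactly $[\beta_x,1]$ by the $\sigma^d$ description above, since $x\in\sigma^d(\alpha)$ iff $\alpha<\beta_x$), and $x\in\sigma^c(\beta_x)=\sigma(\beta_x)$, which is precisely property (inf--F) for $\sigma^d$ — here one uses $(\sigma^d)^c=\sigma^c=\sigma$, so that the ``$\sigma$'' appearing in the statement of (inf--F), namely the closure of the strict decreasing subset $\sigma^d$, coincides with our $\sigma$. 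For (b)$\Rightarrow$(c): assuming (inf--F) for $\sigma^d$, each $x\in U$ lies in $\sigma^c(\beta_x)\setminus\sigma^d(\beta_x)$ where $\beta_x=\Inf\{\alpha\mid x\notin\sigma^d(\alpha)\}$, giving the inclusion $U\subseteq\cup_\alpha(\sigma^c(\alpha)\setminus\sigma^d(\alpha))$; the reverse inclusion is trivial, and disjointness follows because membership of $x$ in $\sigma^c(\alpha)\setminus\sigma^d(\alpha)$ forces $\alpha=\max A_x$, which pins down $\alpha$ uniquely. For (c)$\Rightarrow$(a): if the disjoint-union identity holds, then every $x\in U$ lies in some $\sigma^c(\alpha)\setminus\sigma^d(\alpha)$, and by the dictionary this $\alpha$ must be $\max A_x$, so the maximum exists — i.e.\ (F) holds.

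**Main obstacle.** The routine set manipulations are harmless; the delicate point is bookkeeping about which ``$\sigma$'' is meant in each property. Property (inf--F) is stated for a strict decreasing gradual subset $\sigma$ and refers both to $\sigma^d$ and to a bare $\sigma$; in the lemma we are feeding in $\sigma^d$ (which satisfies $(\sigma^d)^d=\sigma^d$ by the earlier lemma on $d$), so its ``$\sigma$'' is $\sigma^d$ and the condition ``$x\in\sigma(\beta)$'' should be read as ``$x\in\sigma^d{}^{\,c}(\beta)$'' $=\sigma^c(\beta)=\sigma(\beta)$ — and it is exactly this identification, together with $\sigma^{cd}=\sigma^d$ and $\sigma^{dc}=\sigma^d$, that makes the three statements about one object rather than three. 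The second mildly subtle point is the hypothesis ``$\sigma$ not strict decreasing'': this guarantees $\sigma^d\subsetneqq\sigma^c=\sigma$, so the gap sets $\sigma^c(\alpha)\setminus\sigma^d(\alpha)$ are not all empty and the equivalence is non-vacuous; I would note where it is used but otherwise the proof goes through verbatim for strict decreasing $\sigma$ (with everything trivially true since then $U=\varnothing$-style degeneracies do not actually occur, but the ``$\max$'' always exists because $A_x$ would have to be all of $(0,1]$ or similarly degenerate — this edge case is worth a one-line remark rather than a separate argument).
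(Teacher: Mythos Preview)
Your proposal is correct and follows essentially the same route as the paper: both arguments reduce all three conditions to the assertion that $\max A_x$ exists for each $x\in U$, where $A_x=\{\alpha\mid x\in\sigma(\alpha)\}$. The paper argues (a)$\Leftrightarrow$(b) and (a)$\Leftrightarrow$(c) separately rather than cyclically, and does not set up your ``dictionary'' explicitly, but the computations are identical.

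One slip worth flagging in your bookkeeping discussion: you write $(\sigma^d)^c=\sigma^c=\sigma$, but in fact $(\sigma^d)^c=\sigma^d$ (indeed you quote $\sigma^{dc}=\sigma^d$ in the very same sentence), since $\sigma^d$ is already decreasing. This does not damage the actual proof, because the paper's own proof of (a)$\Rightarrow$(b) concludes only that $x\in\sigma(\gamma)$ for the ambient decreasing $\sigma$, which is exactly what you establish; the paper is simply reading ``$\sigma^d$ satisfies (inf--F)'' as the condition that $\gamma=\Inf\{\alpha\mid x\notin\sigma^d(\alpha)\}$ satisfies $x\in\sigma(\gamma)$, not $x\in\sigma^d(\gamma)$.
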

\begin{proof}
By the hypothesis we have $\sigma^d\subsetneqq\sigma^c=\sigma$.
\par (a) $\Rightarrow$ (b). %
Let $\beta=\Max\{\alpha\mid\;x\in\sigma(\alpha)\}$ and $\gamma=\Inf\{\alpha\mid\;x\notin\sigma^d(\alpha)\}$, then $x\notin\sigma(\delta)$ for every $\delta>\gamma$. Since $\sigma$ is decreasing, $\beta\leq\gamma$. If $\beta<\gamma$, for any $\omega$ such that $\beta<\omega<\gamma$ we have $x\notin\sigma(\omega)$, hence $\gamma\neq\Inf\{\alpha\mid\;x\notin\sigma^d(\alpha)\}$, which is a contradiction.
\par (b) $\Rightarrow$ (a). %
Let $\beta=\Sup\{\alpha\mid\;x\in\sigma(\alpha)\}$ and $\gamma=\Inf\{\alpha\mid\;x\notin\sigma^d(\alpha)\}$. Since $\sigma$ is decreasing, $\beta\leq\gamma$. If $\beta<\gamma$, for any $\omega$ such that $\beta<\omega<\gamma$ we have $x\in\sigma(\omega)$, and $\beta\neq\Sup\{\alpha\mid\;x\in\sigma(\alpha)\}$, which is a contradiction.
\par (a) $\Rightarrow$ (c). %
One inclusion is obvious. Otherwise, if $x\in\cup_{\alpha\in(0,1)}\sigma(\alpha)$, let $\beta=\Max\{\alpha\mid\;x\in\sigma(\alpha)\}$, then $x\notin\sigma(\gamma)$ for any $\gamma>\beta$, hence $x\notin\sigma^d(\beta)$, and $x\in\stackrel{\bullet}{\cup}_{\alpha\in(0,1]}(\sigma^c(\alpha)\setminus\sigma^d(\alpha))$.
\par (c) $\Rightarrow$ (a). %
Let $x\in{X}$, if $x\notin\cup_{\alpha\in(0,1)}\sigma(\alpha)$, then either $x\in\sigma(1)$, and there exists $\Max\{\alpha\mid\;x\in\sigma(\alpha)\}=1$, or $x\notin\cup_{\alpha\in(0,1]}\sigma(\alpha)$, and $\Max\{\alpha\mid\;x\in\sigma(\alpha)\}=0$. Otherwise, if $x\in\cup_{\alpha\in(0,1)}\sigma(\alpha)=\stackrel{\bullet}{\cup}_{\alpha\in(0,1]}(\sigma^c(\alpha)\setminus\sigma^d(\alpha))$, there exists $\alpha$ such that $x\in\sigma(\alpha)\setminus\sigma^d(\alpha)$, hence $\Max\{\alpha\mid\;x\in\sigma(\alpha)\}=\alpha$.
\end{proof}

\begin{remark}\label{re:20180417}
\begin{enumerate}[(1)]
\item
As a consequence of this result, for any decreasing non strict decreasing gradual subset, satisfying property (F), we have $\Max\{\alpha\mid\;x\in\sigma(\alpha)\}=\Inf\{\alpha\mid\;x\notin\sigma^d(\alpha)\}$ for any $x\in{X}$.
\item
In the case of a strict decreasing gradual subset satisfying property (inf--F) we have that it also satisfies property (F), hence the following equalities hold: $\Max\{\alpha\mid\;x\in\sigma(\alpha)\}=\Inf\{\alpha\mid\;x\notin\sigma^d(\alpha)\}$ for any $x\in{X}$.
\end{enumerate}
\end{remark}

Now we are going to establish correspondences between fuzzy subsets and strict decreasing gradual subsets, which preserves union and intersection. The following result, for finite unions, is well known.

\begin{theorem}
Let $X$ be a set, the following statements hold:
\begin{enumerate}[(1)]\sepa
\item
The map $\nu:\mu\mapsto\sigma(\mu)$ associates to any fuzzy subset $\mu$ of $X$ a decreasing gradual subset satisfying property (F), $\sigma(\mu)$ of $X$, defined $\sigma(\mu)(\alpha)=\{x\mid\;\mu(x)\geq\alpha\}$, and it preserves intersections and finite unions.
\item
The map $\upsilon:\sigma\mapsto\mu(\sigma)$ associates to any decreasing gradual subset $\sigma=\sigma^c$, satisfying property (F), a fuzzy subset $\mu(\sigma)$ defined:
\[
\mu(\sigma)(x)=\Max\{\alpha\mid\;x\in\sigma(\alpha)\}.
\]
\end{enumerate}
In addition, we have {$\nu\circ\upsilon=\id$} and {$\upsilon\circ\nu=\id$}, and they preserve finite unions and intersections.
\end{theorem}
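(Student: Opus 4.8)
The plan is to verify each clause separately and then assemble the bijection.

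\medskip

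First I would check clause (1). Fix a fuzzy subset $\mu$ (not identically $0$). That $\sigma(\mu)$ is decreasing is immediate: if $\alpha\leq\beta$ and $\mu(x)\geq\beta$ then $\mu(x)\geq\alpha$, so $\sigma(\mu)(\beta)\subseteq\sigma(\mu)(\alpha)$; in particular $\sigma(\mu)=\sigma(\mu)^c$. For property (F) I need $\Max\{\alpha\mid x\in\sigma(\mu)(\alpha)\}$ to exist for every $x\in\cup_\alpha\sigma(\mu)(\alpha)$. But $x\in\sigma(\mu)(\alpha)$ iff $\mu(x)\geq\alpha$, so the set $\{\alpha\mid x\in\sigma(\mu)(\alpha)\}$ is exactly $(0,\mu(x)]$, whose maximum is $\mu(x)$ (here one uses that $x$ lies in some level, so $\mu(x)>0$). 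Preservation of intersections: $x\in\sigma(\wedge_i\mu_i)(\alpha)$ iff $\Inf_i\mu_i(x)\geq\alpha$ iff $\mu_i(x)\geq\alpha$ for all $i$ iff $x\in\cap_i\sigma(\mu_i)(\alpha)$; the middle equivalence is where the infimum is used and it is genuinely an equivalence because $\Inf_i\mu_i(x)\geq\alpha \Leftrightarrow \forall i\,\mu_i(x)\geq\alpha$. For \emph{finite} unions the analogous step is $\Max\{\mu_1(x),\mu_2(x)\}\geq\alpha \Leftrightarrow \mu_1(x)\geq\alpha \text{ or } \mu_2(x)\geq\alpha$, which holds since the max of finitely many reals is attained; Example~\eqref{ex:20180313} shows this fails for infinite unions, so the restriction to finite unions is essential in (1).

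\medskip

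Next, clause (2): given a decreasing $\sigma=\sigma^c$ with property (F), define $\mu(\sigma)(x)=\Max\{\alpha\mid x\in\sigma(\alpha)\}$ when this set is nonempty, and $\mu(\sigma)(x)=0$ otherwise. Property (F) is exactly what guarantees the maximum exists, so $\mu(\sigma)$ is a well-defined map $X\to[0,1]$; I should remark it is not identically $0$ provided $\sigma$ is not identically $\varnothing$ (which I take as a standing assumption, mirroring the non-triviality hypothesis on $\mu$).

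\medskip

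Finally the two round-trip identities. For $\upsilon\nu=\id$: starting from $\mu$, form $\sigma(\mu)$, then $\mu(\sigma(\mu))(x)=\Max\{\alpha\mid x\in\sigma(\mu)(\alpha)\}=\Max\{\alpha\mid\mu(x)\geq\alpha\}=\mu(x)$ by the computation above — this is essentially property displayed in \eqref{pa:20180417}(1). For $\nu\upsilon=\id$: starting from a decreasing $\sigma$ with property (F), I must show $\sigma(\mu(\sigma))=\sigma$, i.e.\ for each $\alpha$, $\{x\mid\mu(\sigma)(x)\geq\alpha\}=\sigma(\alpha)$. The inclusion $\supseteq$ is clear: if $x\in\sigma(\alpha)$ then $\alpha\in\{\beta\mid x\in\sigma(\beta)\}$ so $\mu(\sigma)(x)\geq\alpha$. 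For $\subseteq$: if $\mu(\sigma)(x)\geq\alpha$, let $\beta=\Max\{\gamma\mid x\in\sigma(\gamma)\}\geq\alpha$; then $x\in\sigma(\beta)\subseteq\sigma(\alpha)$ because $\sigma$ is decreasing and $\alpha\leq\beta$ — here is precisely where $\sigma=\sigma^c$ is used, and it is the one place in the whole argument where the decreasing hypothesis (as opposed to mere set-theoretic bookkeeping) does real work. Preservation of finite unions and intersections by $\nu$ was proved in (1); since $\upsilon=\nu^{-1}$ on the relevant subclasses, it preserves them too, which I would state by applying $\nu^{-1}$ to the identities $\nu(\mu_1\wedge\mu_2)=\nu(\mu_1)\cap\nu(\mu_2)$ etc. The main obstacle is bookkeeping rather than depth: one must be careful that every set of levels considered is nonempty before writing $\Max$, and must not silently upgrade "finite union" to "arbitrary union" — the Examples are there exactly to forbid that — but modulo this care the proof is a sequence of equivalences about real inequalities together with one use of the decreasing property.
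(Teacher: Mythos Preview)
Your proof is correct. The paper does not actually supply a proof of this theorem: it introduces the statement with ``the following result, for finite unions, is well known'' and moves on, reserving the explicit computations for the strict case in Theorem~\eqref{th:20180417}. Your verification is the natural one and parallels what the paper does there --- the two round-trip identities you write out are exactly the analogues of the displayed computations in the proof of Theorem~\eqref{th:20180417}, with $\geq$ in place of $>$ and $\Max$ in place of $\Inf$. Your emphasis on where the decreasing hypothesis is genuinely used (the $\subseteq$ direction of $\nu\upsilon=\id$) and your care about finite versus infinite unions are both appropriate and match the role Example~\eqref{ex:20180313} plays in the paper.
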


The behaviour with respect to infinite unions can be solved using only strict decreasing gradual subsets instead of decreasing gradual subsets.

\begin{theorem}\label{th:20180417}
Let $X$ be a set, the following statements hold:
\begin{enumerate}[(1)]\sepa
\item
The map $\nu:\mu\mapsto\widetilde{\sigma}(\mu)$ associates to any fuzzy subset $\mu$ of $X$ a strict decreasing gradual subset $\widetilde{\sigma}(\mu)$ of $X$, defined $\widetilde{\sigma}(\mu)(\alpha)=\{x\mid\;\mu(x)>\alpha\}$, and it preserves arbitrary unions and intersections.
\item
The map $\upsilon:\sigma\mapsto\widetilde{\mu}(\sigma)$ associates to any strict decreasing gradual subset $\sigma=\sigma^d$, satisfying property (inf--F), a fuzzy subset $\widetilde{\mu}(\sigma)$ defined:
\[
\widetilde{\mu}(\sigma)(x)=\Inf\{\alpha\mid\;x\notin\sigma(\alpha)\}.
\]
\end{enumerate}
In addition, we have {$\nu\upsilon=\id$}, and {$\upsilon\nu=\id$}.
\end{theorem}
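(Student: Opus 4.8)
The plan is to verify the four pieces separately: that $\nu$ and $\upsilon$ are well defined, and that the two composites are identities. The bulk of the work for preservation of unions and intersections by $\nu$ is already done in Proposition~\eqref{pr:20180416}, so I would cite that directly; what remains is to check that $\nu$ really lands in $\mathcal{J}^d(X)$ (strict decreasing gradual subsets), that $\widetilde{\sigma}(\mu)$ always satisfies property (inf--F), and that $\upsilon$ produces a genuine fuzzy subset, i.e. a map $X\longrightarrow[0,1]$.

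First I would check that $\widetilde{\sigma}(\mu)$ is a strict decreasing gradual subset. Decreasingness is immediate: if $\alpha\le\beta$ then $\{x\mid\mu(x)>\beta\}\subseteq\{x\mid\mu(x)>\alpha\}$, with the value at $1$ adjusted to $\mu_1$ exactly as in the definition of $\widetilde{\sigma}(\mu)$ from the Example. For strictness, i.e. $\widetilde{\sigma}(\mu)=\widetilde{\sigma}(\mu)^d$, I would compute for $\alpha<1$ that $\widetilde{\sigma}(\mu)^d(\alpha)=\cup\{\widetilde{\sigma}(\mu)(\beta)\mid\beta>\alpha\}=\cup\{x\mid\mu(x)>\beta,\ \beta>\alpha\}$, and observe that $\mu(x)>\alpha$ iff there is $\beta$ with $\alpha<\beta<\mu(x)$ (here the half-open nature of strict levels is essential), so this union equals $\{x\mid\mu(x)>\alpha\}=\widetilde{\sigma}(\mu)(\alpha)$; the value at $1$ is handled by the first clause of the definition of $\sigma^d$. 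Next I would verify property (inf--F): given $x$, set $\beta=\Inf\{\alpha\mid x\notin\widetilde{\sigma}(\mu)(\alpha)\}$; for $\alpha<1$ we have $x\notin\widetilde{\sigma}(\mu)(\alpha)$ iff $\mu(x)\le\alpha$, so that infimum is exactly $\mu(x)$, and then $x\in\widetilde{\sigma}(\mu)(\beta)$ amounts to $x\in\mu_{\mu(x)}$ when $\beta<1$, which holds since $\mu(x)\ge\mu(x)$ — here I use that $\widetilde{\sigma}(\mu)(\beta)=\widetilde\mu_\beta$ for $\beta<1$ and $=\mu_1$ for $\beta=1$, so in either case $x\in\widetilde{\sigma}(\mu)(\beta)$. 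This also shows $\widetilde\mu(\widetilde{\sigma}(\mu))(x)=\mu(x)$, giving $\upsilon\nu=\id$.

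For the other direction, let $\sigma=\sigma^d$ satisfy property (inf--F), and put $\mu=\widetilde\mu(\sigma)$, so $\mu(x)=\Inf\{\alpha\mid x\notin\sigma(\alpha)\}$; first I would note $\mu$ takes values in $[0,1]$, using that $\sigma(1)$ may or may not contain $x$ (if $x\in\sigma(\alpha)$ for all $\alpha$ then $\mu(x)$ is an infimum of the empty set, conventionally $1$, consistent with the value-at-$1$ clause). Then I must show $\widetilde{\sigma}(\mu)=\sigma$. For $\alpha<1$: $x\in\widetilde{\sigma}(\mu)(\alpha)$ iff $\mu(x)>\alpha$ iff $\Inf\{\gamma\mid x\notin\sigma(\gamma)\}>\alpha$ iff $x\in\sigma(\gamma)$ for all $\gamma$ in some interval $(\alpha,\alpha']$; since $\sigma=\sigma^d$, $\sigma(\alpha)=\cup\{\sigma(\gamma)\mid\gamma>\alpha\}$, and property (inf--F) is precisely what guarantees the boundary case $x\in\sigma(\mu(x))$ so that the infimum is actually attained inside $\sigma$ — this closes the iff. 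The case $\alpha=1$ is direct from the definitions. I would conclude $\nu\upsilon=\id$.

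The main obstacle is the careful bookkeeping at the endpoint $\alpha=1$ and the interplay between "$\ge$" and "$>$" when passing suprema/infima through the level sets: the whole point of using \emph{strict} decreasing gradual subsets is that strict levels are "open from the left", so an infimum $\beta$ of exclusion-levels is never itself an exclusion-level unless property (inf--F) fails, and it is exactly this subtlety that the hypothesis (inf--F) is designed to control. Everything else — decreasingness, the union/intersection compatibility, the value-at-$1$ adjustments — is routine once Proposition~\eqref{pr:20180416} and the preceding lemma on property (F) versus (inf--F) are in hand.
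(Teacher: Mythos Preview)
Your overall route is the paper's route: cite Proposition~\eqref{pr:20180416} for preservation of unions and intersections, then verify $\upsilon\nu=\id$ and $\nu\upsilon=\id$ by direct computation with the formulas. The paper is terser than you are (it does not spell out that $\widetilde{\sigma}(\mu)$ is strict decreasing, for instance), but the substance of the two arguments for the composites is the same.

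There is one genuine slip, in the extra step you added. You try to show that $\widetilde{\sigma}(\mu)$ satisfies property (inf--F) and write that ``$x\in\widetilde{\sigma}(\mu)(\beta)$ amounts to $x\in\mu_{\mu(x)}$ when $\beta<1$''. But for $\beta=\mu(x)<1$ the definition gives $\widetilde{\sigma}(\mu)(\beta)=\widetilde{\mu}_\beta=\{y\mid\mu(y)>\beta\}$, \emph{not} $\mu_\beta$, and $x\notin\widetilde{\mu}_{\mu(x)}$ since $\mu(x)\not>\mu(x)$. So that step fails, and in fact $\widetilde{\sigma}(\mu)$ does \emph{not} satisfy (inf--F) at any $x$ with $0<\mu(x)<1$; the claim cannot be repaired. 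This is not fatal to the theorem as stated: part (1) asserts only that $\widetilde{\sigma}(\mu)$ is a strict decreasing gradual subset, and the formula defining $\upsilon$ makes sense on any such $\sigma$, so your computation of $\upsilon\nu(\mu)(x)=\Inf\{\alpha\mid\mu(x)\le\alpha\}=\mu(x)$ still goes through independently of the failed (inf--F) check. You should simply drop that verification.

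One smaller remark on $\nu\upsilon=\id$: you invoke (inf--F) for a ``boundary case'' in the $\alpha<1$ argument and then dismiss $\alpha=1$ as ``direct from the definitions''. It is rather the reverse. For $\alpha<1$, strictness $\sigma=\sigma^d$ alone already gives both inclusions (the paper's use of (inf--F) there is convenient but not essential). The genuinely delicate point is $\alpha=1$, where one must exclude the possibility that $x\in\sigma(\gamma)$ for all $\gamma<1$ yet $x\notin\sigma(1)$; this is exactly what (inf--F) rules out, and it is not automatic.
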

\begin{proof}
We had already studied the map $\nu$ in Proposition~\eqref{pr:20180416} in page \pageref{pa:20180417}.
\par
The map $\upsilon$ is well defined due to Theorem~\eqref{th:20180417}. Now, we check that the compositions are the identity.
\par
Let $\mu$ be a fuzzy subset of $X$, for any $x\in{X}$, we have:
\[
{\upsilon\circ\nu(\mu)(x)}
=\widetilde{\mu}(\widetilde{\sigma}(\mu))x)
=\Inf\{\alpha\mid\;x\notin\widetilde{\sigma}(\mu)(\alpha)\}
=\Inf\{\alpha\mid\;\mu(x)\leq\alpha\}
=\mu(x).
\]
On the other hand, let $\sigma$ be a strict decreasing gradual subset, and $\alpha\in(0,1]$, we have:
\[
{\nu\circ\upsilon(\sigma)(\alpha)}
=\widetilde{\sigma}(\widetilde{\mu}(\sigma))(\alpha)
=\{x\mid\;\widetilde{\mu}(\sigma)(x)>\alpha\}
=\{x\mid\;\Inf\{\beta\mid\;x\notin\sigma(\beta)\}>\alpha\}.
\]
If $\Inf\{\beta\mid\;x\notin\sigma(\beta)\}=\delta_x$, then $\delta_x>\alpha$, since $\sigma$ satisfies property (inf--F), then $x\in\sigma(\delta_x)\subseteq\sigma(\alpha)$. Otherwise, if $x\in\sigma(\alpha)=\sigma^d(\alpha)=\cup\{\sigma(\beta)\mid\;\beta>\alpha\}$, there exists $\gamma>\alpha$ such that $x\in\sigma(\gamma)$, hence $\Inf\{\beta\mid\;x\notin\sigma(\beta)\}>\gamma>\alpha$, and we have the other inclusion.
\end{proof}

As a consequence, we have the final result that establishes an isomorphism between the two lattices. See Proposition~\eqref{pr:20180416}.

\begin{corollary}
Let $X$ be a set, there is an isomorphism between the lattice of all fuzzy subset of $X$ and the lattice of all strict decreasing gradual subset of $X$, satisfying property (F), and they preserves arbitrary unions and intersections.
\end{corollary}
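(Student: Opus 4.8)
The plan is to assemble the corollary directly from Theorem~\eqref{th:20180417} together with Proposition~\eqref{pr:20180416}, so that essentially no new computation is needed; the work is in bookkeeping and in checking that the two maps $\nu$ and $\upsilon$ are not merely mutually inverse bijections but lattice isomorphisms. First I would fix the two objects in play: on one side the poset $\mathbf{Fuzz}(X)$ of all fuzzy subsets $\mu\colon X\longrightarrow[0,1]$, ordered pointwise, with meet $\wedge$ and join $\vee$ as recalled before Example~\eqref{ex:20180313}; on the other side the poset $\mathcal{J}^d(X)$ of all strict decreasing gradual subsets satisfying property (inf--F), ordered by $\sigma_1\subseteq\sigma_2$, with operations $\underline{\vee}_i\sigma_i=\cap_i\sigma_i$ and $\underline{\wedge}_i\sigma_i=(\cap_i\sigma_i)^d$ introduced just before Proposition~\eqref{pr:20180416}. (I note in passing that the statement of the corollary as printed says ``property (F)'' where ``property (inf--F)'' is meant, matching the hypothesis of Theorem~\eqref{th:20180417}(2); I would state it with (inf--F) and remark that by Remark~\eqref{re:20180417}(2) such a gradual subset also satisfies (F), so the two formulations agree.)

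Next I would verify that $\nu$ and $\upsilon$ land in the claimed sets and are order-preserving. That $\nu(\mu)=\widetilde{\sigma}(\mu)$ is a strict decreasing gradual subset satisfying (inf--F), and that $\upsilon(\sigma)=\widetilde{\mu}(\sigma)$ is a well-defined fuzzy subset, is exactly the content of Theorem~\eqref{th:20180417}(1) and (2). Monotonicity of $\nu$ is immediate: if $\mu\leq\mu'$ pointwise then $\{x\mid\mu(x)>\alpha\}\subseteq\{x\mid\mu'(x)>\alpha\}$ for every $\alpha$, so $\widetilde{\sigma}(\mu)\subseteq\widetilde{\sigma}(\mu')$. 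Monotonicity of $\upsilon$ is just as easy: if $\sigma_1\subseteq\sigma_2$ then $\{\alpha\mid x\notin\sigma_2(\alpha)\}\subseteq\{\alpha\mid x\notin\sigma_1(\alpha)\}$, hence $\Inf\{\alpha\mid x\notin\sigma_1(\alpha)\}\leq\Inf\{\alpha\mid x\notin\sigma_2(\alpha)\}$, i.e. $\widetilde{\mu}(\sigma_1)(x)\leq\widetilde{\mu}(\sigma_2)(x)$ for all $x$. Combined with the identities $\nu\upsilon=\id$ and $\upsilon\nu=\id$ from Theorem~\eqref{th:20180417}, a monotone bijection with monotone inverse is automatically a poset isomorphism, and a poset isomorphism between lattices preserves all existing joins and meets; so in fact the preservation of $\vee,\wedge$ versus $\underline{\vee},\underline{\wedge}$ is forced once the order isomorphism is in hand.

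Even so, I would record the explicit compatibility, both because it is the substantive point and because it identifies \emph{which} operations correspond. Here I would invoke Proposition~\eqref{pr:20180416} directly: for any family $\{\mu_i\}$ we have $\widetilde{\sigma}(\vee_i\mu_i)=\underline{\vee}_i\widetilde{\sigma}(\mu_i)$ and $\widetilde{\sigma}(\wedge_i\mu_i)=\underline{\wedge}_i\widetilde{\sigma}(\mu_i)$, i.e. $\nu$ carries $\vee$ to $\underline{\vee}$ and $\wedge$ to $\underline{\wedge}$ for arbitrary (in particular infinite) families. Applying $\upsilon$ to both sides and using $\upsilon\nu=\id$ then gives the reverse statement, that $\upsilon$ carries $\underline{\vee}$ to $\vee$ and $\underline{\wedge}$ to $\wedge$. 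This yields the lattice isomorphism $\mathbf{Fuzz}(X)\cong\mathcal{J}^d_{\mathrm{(inf\text{-}F)}}(X)$ compatible with arbitrary unions and intersections, which is the assertion of the corollary.

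The one genuinely delicate point — and I would flag it as the main obstacle, or at least the place where care is required rather than a deep idea — is the role of property (inf--F) on the gradual side and the fact that $\underline{\wedge}$ is \emph{not} the naive pointwise intersection $\cap_i\sigma_i$ but its strict accumulation $(\cap_i\sigma_i)^d$. Example~\eqref{ex:20180313b} shows precisely that without the correction $(-)^d$ the pointwise intersection of the $\widetilde{\sigma}(\mu_i)$ overshoots $\widetilde{\sigma}(\wedge_i\mu_i)$, so the lattice structure on $\mathcal{J}^d(X)$ had to be chosen to match, and one must check that $\mathcal{J}^d_{\mathrm{(inf\text{-}F)}}(X)$ is closed under $\underline{\vee}$ and $\underline{\wedge}$ — equivalently, that the image of $\mathbf{Fuzz}(X)$ under $\nu$ is exactly this set and is closed under the two operations. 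That closure is what Proposition~\eqref{pr:20180416} read together with the surjectivity half of Theorem~\eqref{th:20180417} supplies, so the corollary reduces to citing those two results; but I would make the reduction explicit rather than leave it to the reader, since the asymmetry between ``keep the union, fix the intersection'' is the whole reason strict decreasing gradual subsets, and not arbitrary decreasing ones, are the right category here.
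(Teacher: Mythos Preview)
Your proposal is correct and follows exactly the route the paper intends: the paper offers no separate argument for this corollary beyond the sentence ``See Proposition~\eqref{pr:20180416}'', so the proof is meant to be precisely the combination of Theorem~\eqref{th:20180417} (the mutually inverse bijections $\nu,\upsilon$) with Proposition~\eqref{pr:20180416} (compatibility of $\nu$ with arbitrary $\vee,\wedge$ versus $\underline{\vee},\underline{\wedge}$) that you spell out. Your additional remarks --- the monotonicity check, the observation that a poset isomorphism automatically respects all existing meets and joins, the clarification that ``(F)'' in the statement should read ``(inf--F)'' per Remark~\eqref{re:20180417}(2), and the closure of the image under $\underline{\vee},\underline{\wedge}$ --- are all correct and simply make explicit what the paper leaves to the reader.
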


\subsection{A functorial interpretation}

Let us consider $(0,1]$ as a category whose objects are the elements of $(0,1]$, and homomorphisms: only one, $f_{\alpha,\beta}$, from $\alpha$ to $\beta$ whenever $\alpha\leq\beta$, and the obvious composition.

Let $F:(0,1]\longrightarrow\Set$ a contravariant functor from the category $(0,1]$ to the category of sets. Indeed, $\{F(\alpha)\mid\;\alpha\in(0,1]\}$ is a directed system with maps $F(f_{\alpha,\beta}):F(\beta)\longrightarrow{F(\alpha)}$, if $\alpha\leq\beta$. Let $D=\dlim{F}$ be the direct limit of this system.

Let us remember the definition of the {\textbf{direct limit}}, $D=\dlim{F}$. First we consider the disjoint union, $\stackrel{\bullet}{\cup}F(\alpha)$, of the family of sets $\{F(\alpha)\mid\;\alpha\in(0,1]\}$, and in it, the equivalence relation $R$ generated by: $a\in{F(\alpha)}$ is \emph{related with} $b\in{F(\beta)}$ if either $\alpha\leq\beta$ and $F(f_{\alpha,\beta})(b)=a$ or $\beta\leq\alpha$ and $F(f_{\beta,\alpha})(a)=b$. Let $p:\stackrel{\bullet}{\cup}F(\alpha)\longrightarrow(\stackrel{\bullet}{\cup}F(\alpha))/R$ be the canonical projection, $i_\beta:F(\beta)\longrightarrow\stackrel{\bullet}{\cup}F(\alpha)$ be the inclusion, for any $\beta\in(0,1]$, and $q_\beta=p\,i_\beta$ the composition.
\[
\begin{xy}
\xymatrix{
F(\beta)\ar[dr]_{i_\beta}\ar[rrrd]^{q_\beta}\ar[dd]_{F(f_{\alpha,\beta})}\\
&\stackrel{\bullet}{\cup}F(\alpha)\ar[rr]_p&&D\\
F(\alpha)\ar[ru]^{i_\alpha}\ar[rrru]_{q_\alpha}
}\end{xy}
\]
The pair $(D,\{p_\alpha\mid\;\alpha\in(0,1]\})$ satisfies the corresponding universal property of the direct limit.

In the particular case in which every map $F(f_{\alpha,\beta})$ is injective, then every $q_\alpha$ is also injective; this means that we can consider every $F(\alpha)$ as a subset of $D$. Therefore, we have that $F$ defines a decreasing gradual subset of $D=\dlim{F}$; or more generally, a decreasing gradual subset of any overset of $D$, and represent it by $(F,\dlim{F})$.

This allows to give an interpretation of decreasing gradual subsets in terms of contravariant functors. If we start from a decreasing gradual subset of a set $X$, then $\{\sigma(\alpha)\mid\;\alpha\in(0,1]\}$, together with the family of inclusions, is a directed system, and if $j_\alpha:\sigma(\alpha)\longrightarrow{X}$, for every $\alpha\in(0,1]$, is the inclusion, then we have a commutative diagram
\[
\begin{xy}
\xymatrix{
\sigma(\beta)\ar[dr]_{i_\beta}\ar[rrrd]_{q_\beta}\ar[dd]_{\sigma(f_{\alpha,\beta})}\ar[rrrrrd]^{j_\beta}\\
&\stackrel{\bullet}{\cup}\sigma(\alpha)\ar[rr]_p&&D\ar@{-->}[rr]^(.35)h&&X\\
\sigma(\alpha)\ar[ru]^{i_\alpha}\ar[rrru]^{q_\alpha}\ar[rrrrru]_{j\alpha}
}\end{xy}
\]
and an inclusion $h:D\longrightarrow{X}$, from the direct limit $D$ to $X$, beings $h(D)=\Ima(h)$ the union of the family of the subsets $\{\sigma(\alpha)\mid\;\alpha\in(0,1]\}$.

Taking into account this construction, we may show that a {decreasing} gradual subset $\sigma$ of a set $X$ is nothing more than a {contravariant} functor $F:(0,1]\longrightarrow\Set$ of this shape, together with an injective map $D=\dlim{F}\longrightarrow{X}$. As a consequence, we may consider {contravariant} functors as the central element in the study of decreasing gradual subsets.

Hence, we may define a \textbf{gradual set} as a contravariant functor $F:(0,1]\longrightarrow\Set$, {and a \textbf{decreasing gradual set} as a gradual set such that each map} $F(f_{\alpha,\beta})$, whenever $\alpha\leq\beta$, is injective.

Given two gradual sets $F_1$ and $F_2$, a map from $F_1$ to $F_2$ is just a natural transformation $\theta:F_1\longrightarrow{F_2}$, i.e., a set of maps $\{\theta_\alpha\mid\;\alpha\in(0,1]\}$ such that each diagram commutes, whenever $\alpha\leq\beta$.
\[
\begin{xy}
\xymatrix{
F_1(\beta)\ar[rr]^{F_1(f_{\alpha,\beta})}\ar[d]_{\theta_\beta}&&F_1(\alpha)\ar[d]^{\theta_\alpha}\\
F_2(\beta)\ar[rr]^{F_2(f_{\alpha,\beta})}&&F_2(\alpha)\\
}\end{xy}
\]
The contravariant functors from $(0,1]$ to $\Set$, i.e., the gradual sets, constitute a category that we shall denote by $\Set^{(0,1]}$. {The class of all decreasing gradual sets defines a full subcategory of $\Set^{(0,1]}$, and it is closed under (finite and infinite) unions and intersections. Let us call $\mathcal{J}$ this subcategory.

{In the subcategory $\mathcal{J}$ we define an interior operator,} $d:F\mapsto{F^d}$, as follows:
\[
F^d(\alpha)=\cup\{F(\gamma)\mid\;\gamma>\alpha\}=\dlim_{(\alpha,1]}F(\gamma),
\]
and if $\alpha\leq\beta$, then there is an inclusion functor $(\beta,1]\longrightarrow(\alpha,1]$, and a natural map from $F^d(\beta)=\dlim_{(\beta,1]}F(\gamma)$ to $F^d(\alpha)=\dlim_{(\alpha,1]}F(\gamma)$.

\begin{proposition}
Let $F$ be a decreasing gradual set, and $\theta:F_1\longrightarrow{F_2}$ be a decreasing gradual set map. The following statements hold.
\begin{enumerate}[(1)]\sepa
\item
$F^d$ is a contravariant functor from $(0,1]$ to $\Set$, hence it is a gradual set.
\item
If $\alpha\leq\beta$, the natural map $F^d(\beta)\longrightarrow{F^d(\alpha)}$ is injective, hence $F^d$ is a decreasing gradual set.
\item
There exists a natural map $\theta^d:F_1^d\longrightarrow{F_2^d}$, defined, for every $\beta\in(0,1]$, as the only map making commutative the following diagram
\[
\begin{xy}
\xymatrix{
F_1(\gamma)\ar[rr]\ar[d]_{\theta_\beta}&&\dlim_{(\beta,1]}F_1(\gamma)=F_1^d(\beta)\ar[rr]\ar@{-->}[d]^{\theta^d(\beta)}&&\dlim_{(0,1]}F_1(\gamma)\ar[d]\\
F_2(\gamma)\ar[rr]&&\dlim_{(\beta,1]}F_2(\gamma)=F_1^d(\beta)\ar[rr]&&\dlim_{(0,1]}F_2(\gamma)
}\end{xy}
\]
\item
$d$ defines an endofunctor of $\mathcal{J}$, which is an interior operator in $\mathcal{J}$.
\end{enumerate}
\end{proposition}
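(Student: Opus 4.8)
The plan is to verify the four assertions in turn, using the explicit description $F^d(\alpha)=\dlim_{(\alpha,1]}F(\gamma)$ and the fact that, since $F$ is a decreasing gradual set, all the transition maps $F(f_{\gamma,\delta})$ are injective, so each $F(\gamma)$ sits as a subset of the direct limit $\dlim_{(0,1]}F(\gamma)$, and one may replace all the direct limits by honest unions of subsets of that ambient set. Working inside this ambient set $D=\dlim_{(0,1]}F(\gamma)$ turns everything into elementary manipulations with unions, which is the main simplification.

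First I would check (1): for $\alpha\le\beta$ the inclusion of index categories $(\beta,1]\hookrightarrow(\alpha,1]$ induces, by the universal property of the colimit (or, concretely, because $\cup\{F(\gamma)\mid\gamma>\beta\}\subseteq\cup\{F(\gamma)\mid\gamma>\alpha\}$), a well-defined map $F^d(\beta)\longrightarrow F^d(\alpha)$; functoriality $F^d(f_{\alpha,\beta})F^d(f_{\beta,\gamma})=F^d(f_{\alpha,\gamma})$ and $F^d(f_{\alpha,\alpha})=\id$ are immediate from the corresponding identities for these inclusions, so $F^d$ is a contravariant functor, i.e.\ a gradual set. For (2), realize $F^d(\beta)=\cup\{F(\gamma)\mid\gamma>\beta\}$ as a subset of $D$; then the transition map $F^d(\beta)\to F^d(\alpha)$ is literally the inclusion of subsets of $D$, hence injective, so $F^d$ is a decreasing gradual set. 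This is essentially the computation already done in the lemma showing $\sigma^d\subseteq\sigma^c$ and $\sigma^{dd}=\sigma^d$, now read functorially.

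For (3), given a map (natural transformation) $\theta:F_1\longrightarrow F_2$, one gets for each $\beta$ a cocone from the diagram $\{F_1(\gamma)\mid\gamma>\beta\}$ to $F_2^d(\beta)=\dlim_{(\beta,1]}F_2(\gamma)$ by composing $\theta_\gamma$ with the structure maps $F_2(\gamma)\to F_2^d(\beta)$; naturality of $\theta$ guarantees this is a genuine cocone, so the universal property of $F_1^d(\beta)$ yields a unique $\theta^d(\beta):F_1^d(\beta)\to F_2^d(\beta)$ making the displayed square commute, and the right-hand square commutes because both routes equal the map induced by $\theta$ on $\dlim_{(0,1]}$. Uniqueness at each $\beta$ forces naturality of $\theta^d$ in $\beta$ (two maps into a colimit agreeing after precomposition with the structure maps are equal). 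For (4), functoriality of $d$ — i.e.\ $(\theta'\theta)^d=\theta'^d\theta^d$ and $\id^d=\id$ — follows again from uniqueness; that $d$ lands in $\mathcal{J}$ is (2); and the interior-operator axioms are: the natural inclusion $F^d\hookrightarrow F$ (since $\cup\{F(\gamma)\mid\gamma>\alpha\}\subseteq F(\alpha)$, using $F$ decreasing), monotonicity (if $\theta$ is a monomorphism then so is $\theta^d$, visibly as a union of subsets), and idempotence $F^{dd}=F^d$, which is exactly the identity $\sigma^{dd}=\sigma^d$ from the earlier lemma transported to this setting.

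The one point deserving care — the main obstacle, such as it is — is the bookkeeping in (3): checking that the $\theta_\gamma$ really assemble into a cocone requires the naturality squares of $\theta$ restricted to $(\beta,1]$, and then one must confirm that the induced $\theta^d(\beta)$ are compatible as $\beta$ varies (naturality of $\theta^d$), which is where the ``two maps into a colimit agreeing on all structure maps coincide'' argument is used repeatedly. I would also be slightly careful about the degenerate index set: $(\alpha,1]$ is nonempty for every $\alpha\in(0,1]$ (it contains $1$), so all the colimits are over nonempty directed posets and the subset-of-$D$ picture is legitimate throughout. Everything else is routine and parallels the already-proven lemmas on the operator $\sigma\mapsto\sigma^d$.
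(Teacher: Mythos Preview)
The paper states this proposition without proof, so there is no argument to compare against; your approach—embedding everything in the ambient set $D=\dlim_{(0,1]}F(\gamma)$, replacing the partial colimits by honest unions, and invoking the universal property for (3) and (4)—is exactly the natural one given the paper's setup and its earlier lemmas on $\sigma\mapsto\sigma^d$, and it is correct.

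One small slip worth flagging: you claim that $(\alpha,1]$ is nonempty for every $\alpha\in(0,1]$ because it contains $1$, but this fails at $\alpha=1$, where $(1,1]=\varnothing$. The paper's functor-level formula $F^d(\alpha)=\dlim_{(\alpha,1]}F(\gamma)$ is itself careless at this endpoint (the earlier definition of $\sigma^d$ carries the special clause $\sigma^d(1)=\sigma(1)$, which presumably should be inherited here). With that convention $F^d(1)=F(1)$ in place, your argument goes through unchanged; without it one simply has $F^d(1)=\varnothing$, which still satisfies all four assertions but no longer matches the gradual-subset operator verbatim.
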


A decreasing gradual set $F$ is an \textbf{strict decreasing gradual set} whenever $F=F^d$, and satisfies \textbf{property (F)} if $D=\dlim{F}=\stackrel{\bullet}{\cup}\{F(\alpha)\setminus{F^d(\alpha)}\}$, where the union is taken in $D$.

By the relationship between (inf--F) and (F) properties, we may define a fuzzy set as a strict decreasing gradual set satisfying property (inf--F). In particular, strict decreasing gradual sets satisfying property (inf--F) constitute a full subcategory of $\mathcal{J}$.

As a consequence, we have the following result.

\begin{theorem}
Let $F$ be a gradual set,
\begin{enumerate}[(1)]\sepa
\item
The following statements are equivalent:
 \begin{enumerate}[(a)]
 \item
 $F$ is a strict decreasing gradual set.
 \item
 The pair $(F,\dlim{F})$ is a strict decreasing gradual subset of $D=\dlim{F}$.
 \end{enumerate}
\item
The following statements are equivalent:
 \begin{enumerate}[(a)]
 \item
 $F$ is a strict decreasing gradual set satisfying property (inf--F), i.e., $F$ is a fuzzy set.
 \item
 The pair $(F,\dlim{F})$ is a strict decreasing gradual subset of $D=\dlim{F}$ satisfying property (inf--F).
 \end{enumerate}
\end{enumerate}
\end{theorem}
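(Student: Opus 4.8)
The plan is to transport everything onto the single set $D=\dlim{F}$, by showing that the passage from a decreasing gradual set $F$ to the decreasing gradual subset $\sigma:=(F,\dlim{F})$ it induces on $D$ carries the interior operator $F\mapsto F^d$ of $\mathcal{J}$ onto the strict accumulation $\sigma\mapsto\sigma^d$, componentwise; both equivalences then drop out. One trivial remark first: in (1), statement (a) already contains the hypothesis that $F$ is a \emph{decreasing} gradual set, and the symbol $(F,\dlim{F})$ only denotes a gradual subset when the transition maps $F(f_{\alpha,\beta})$ are injective, so I may assume $F\in\mathcal{J}$ throughout. Write $q_\alpha\colon F(\alpha)\longrightarrow D$ for the canonical maps into the direct limit; as recalled just before the statement, each $q_\alpha$ is injective, $q_\alpha\circ F(f_{\alpha,\beta})=q_\beta$ for $\alpha\leq\beta$, and $\sigma(\alpha):=q_\alpha(F(\alpha))\subseteq D$ defines the decreasing gradual subset $(F,\dlim{F})$ of $D$, with $\bigcup_\alpha\sigma(\alpha)=D$.

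The core step is to identify $F^d$ with $\sigma^d$ under this dictionary. Fix $\alpha\in(0,1)$. The two cocones $\{q_\gamma\colon F(\gamma)\to D\}_{\gamma>\alpha}$ and $\{F(f_{\alpha,\gamma})\colon F(\gamma)\to F(\alpha)\}_{\gamma>\alpha}$ over the diagram $F|_{(\alpha,1]}$ induce, respectively, the natural map $F^d(\alpha)=\dlim_{(\alpha,1]}F\longrightarrow D$ and the natural map $F^d(\alpha)\longrightarrow F(\alpha)$; since $q_\alpha\circ F(f_{\alpha,\gamma})=q_\gamma$, uniqueness in the universal property shows the composite $F^d(\alpha)\longrightarrow F(\alpha)\stackrel{q_\alpha}{\longrightarrow}D$ equals the first map. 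Because $(\alpha,1]$ is directed, every element of $F^d(\alpha)$ already comes from some $F(\gamma)$ with $\gamma>\alpha$, so the image of $F^d(\alpha)$ in $D$ is $\bigcup_{\gamma>\alpha}q_\gamma(F(\gamma))=\bigcup_{\gamma>\alpha}\sigma(\gamma)=\sigma^d(\alpha)$; moreover this map into $D$ is injective, since if $a\in F(\gamma)$ and $b\in F(\delta)$ with $\alpha<\gamma\leq\delta$ satisfy $q_\gamma(a)=q_\delta(b)$, then the cocone relation gives $q_\gamma(a)=q_\gamma(F(f_{\gamma,\delta})(b))$, hence $a=F(f_{\gamma,\delta})(b)$ by injectivity of $q_\gamma$, which already identifies $a$ and $b$ inside $\dlim_{(\alpha,1]}F$. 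It follows that $F^d(\alpha)\to F(\alpha)$ is injective and that $q_\alpha$ restricts to a bijection of its image onto $\sigma^d(\alpha)\subseteq\sigma(\alpha)$: in other words, under the identifications $q_\alpha\colon F(\alpha)\cong\sigma(\alpha)$ the subfunctor $F^d$ of $F$ is exactly $\sigma^d\subseteq\sigma$. (The boundary value $\alpha=1$ is automatic, with the convention $\sigma^d(1)=\sigma(1)$ and $F^d(1)=F(1)$.)

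Part (1) is now immediate: $F=F^d$ says that the monomorphism $F^d\hookrightarrow F$ is an isomorphism, i.e.\ that $F^d(\alpha)\to F(\alpha)$ is onto for every $\alpha$, which translates to $\sigma^d(\alpha)=\sigma(\alpha)$ for every $\alpha$, i.e.\ $\sigma=\sigma^d$, i.e.\ $(F,\dlim{F})$ is a strict decreasing gradual subset of $D$. For part (2), by (1) I may assume $F=F^d$, so $\sigma=(F,\dlim{F})$ is a strict decreasing gradual subset of $D$, and it remains to see that property (inf--F) says the same thing in the two languages. But (inf--F) is a statement about an element $x$ ranging over $\bigcup_\alpha\sigma(\alpha)=D$ and about its membership in the subsets $F(\alpha)=\sigma(\alpha)$ and $F^d(\alpha)=\sigma^d(\alpha)$ of $D$ only, and the dictionary above identifies $D$, every $F(\alpha)$, and every $F^d(\alpha)$ with the corresponding data of $\sigma$; hence the functor formulation and the subset formulation of (inf--F) are literally the same condition, and (2a) holds exactly when (2b) does.

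I expect the only real work to lie in the second paragraph: verifying that the natural map $F^d(\alpha)=\dlim_{(\alpha,1]}F\to D$ is injective with image $\sigma^d(\alpha)$. This is not deep, but it is the one place where one must unwind the concrete model of the direct limit, keep the contravariance of $F$ straight, and use the hypothesis that $F$ is decreasing — i.e.\ that the maps $F(f_{\alpha,\beta})$, equivalently the $q_\alpha$, are injective — which is precisely what makes this colimit of injections behave well. Everything else is formal bookkeeping.
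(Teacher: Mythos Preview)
Your proof is correct. The paper states this theorem without proof, treating it as an immediate consequence of the preceding constructions, so there is no argument in the paper against which to compare your approach. Your unwinding of the identification $F^d(\alpha)\cong\sigma^d(\alpha)$ via the universal property of $\dlim_{(\alpha,1]}F$ --- in particular the injectivity check using $q_\gamma(a)=q_\gamma(F(f_{\gamma,\delta})(b))$ --- is precisely the content the paper leaves implicit, and your preliminary observation that both (a) and (b) already force $F\in\mathcal{J}$ is the right way to make the hypotheses coherent.
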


\begin{remark}
The use of decreasing gradual sets allows to avoid the use of decreasing gradual subsets. Indeed, a decreasing gradual subset is, in some sense, more natural: we can build the category of decreasing gradual sets as a subcategory of the functor category $\Set^{(0,1]}$. Otherwise, decreasing gradual subsets are referenced to a set, the same does not happen with decreasing gradual sets; although, as we have the direct limit, the direct system itself acts as a real set. With fuzzy subsets we have the same situation. Observe that in the fuzzy situation, when we consider the directed systems and the direct limit, we are considering only those elements with a positive, non--zero, membership degree, i.e., we do not consider those with zero membership degree. See also Remark~\eqref{re:2018110907}.
\end{remark}

{\begin{remark}\label{re:20181109}
In looking for an abstract model for gradual subsets of a set $X$, our first candidate was the functor category $\Set^{(0,1]}$. But unfortunately, with this category we do not obtain faithful representation of all gradual subsets. One may consider the gradual subset $\sigma$ of a non--empty set $X$ defined by $\sigma(\delta)=\left\{\begin{array}{ll}
X,&\textrm{ if }\delta=\frac{1}{2},\\
\varnothing,&\textrm{ if }\delta\neq\frac{1}{2}.
\end{array}\right.$
Obviously, we can not obtain $\sigma$ using contravariant functors from the category $(0,1]$: the reason is that there are no maps from $X$ to $\varnothing$ (it is not an enriched gradual subset). This have been overcome when we consider enriched gradual sets. This model works perfectly and meets all our expectations whenever we consider decreasing gradual subsets.
\end{remark}}

{\begin{remark}
Observe that in our construction we have fixed the categories $\Set$ and $(0,1]$, and considered contravariant functor. If we change contravariant for covariant, we get increasing gradual sets. On the other hand, the category $\Set$ has some peculiarities: one is that there $\varnothing$, which is an initial and not a final object; the other is that there are objects $A$ and $B$ such that $\Hom_{\Set}(A,B)=\varnothing$; these forces the use of increasing or decreasing gradual sets to assure of writing the theory in a functor language using the usual order relation in $(0,1]$. Some of these restriction will be removed once we change the category $\Set$ for another category as $\Gr$ (the category of groups) or $\rMod{A}$ (the category of right $A$--modules).
\end{remark}}

\section{Gradual subgroups}

In section~\eqref{pg:20181112} we have studied gradual subsets of $\mathcal{P}(X)$ for any set $X$, and considered in $\mathcal{P}(X)$ the operations: intersections and union. We can repeat the same procedure whenever we have a binary operation in $X$, and translate it into $\mathcal{P}(X)$, or a subset of $\mathcal{P}(X)$, in the natural way. Thus, our aim in this section is to study gradual subsets of a given set $X$, together with an additional algebraic structure in $X$; to do that we shall consider the simplest example of groups.

\subsection{Gradual subgroups}

Let $X$ be a non--empty set with a binary operation $*$, we define in $\mathcal{P}(X)\setminus\{\varnothing\}$ new binary and unary operations by:
\[
\begin{array}{ll}
S_1*S_2=\{s_1*s_2\mid\;s_i\in{S_i}\},&\textrm{ for every }S_1,S_2\in\mathcal{P}(X)\setminus\{\varnothing\}\textrm{ and}\\
S^{-1}=\{s^{-1}\mid\;s\in{S}\},&\textrm{ for every }S\in\mathcal{P}(X)\setminus\{\varnothing\}.
\end{array}
\]
Thus, we may define an operation on gradual subsets of $X$ (for simplicity, in this section, for a set $X$ a gradual subset of $X$ is a gradual element of $\mathcal{P}(X)\setminus\{\varnothing\}$) by:
\[
\begin{array}{ll}
(\sigma_1*\sigma_2)(\alpha)=\sigma_1(\alpha)*\sigma_2(\alpha),&\textrm{ for every }\sigma_1,\sigma_2\textrm{ and }\alpha\in(0,1].\\
\sigma^{-1}(\alpha)=\sigma(\alpha)^{-1},&\textrm{ for every }\sigma\textrm{ and }\alpha\in(0,1].
\end{array}
\]

\begin{definition}
Let $G$ be a group (we eliminate the symbol $*$, and represent the product just as juxtaposition), a \textbf{gradual subgroup} of $G$ is a gradual subset $\sigma$ of $G$, satisfying:
\begin{enumerate}[(1)]\sepa
\item
$\sigma*\sigma\subseteq\sigma$,
\item
$\sigma^{-1}\subseteq\sigma$.
\end{enumerate}
\end{definition}

\begin{proposition}\label{pr:20181112}
Let $e$ be the neutral element of a group $G$, and $\sigma$ a gradual subgroup, the following statements hold:
\begin{enumerate}[(1)]\sepa
\item
$e\in\sigma(\alpha)$ for any $\alpha\in(0,1]$.
\item
$\sigma(\alpha)$ is a subgroup of $G$ for any $\alpha\in(0,1]$.
\end{enumerate}
Therefore, if $S(G)$ is the set of all subgroups of $G$, a gradual subgroup of $G$ is just a gradual element of $S(G)$.
\end{proposition}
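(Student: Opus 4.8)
The plan is to prove the two numbered statements in order, and then to assemble the converse. For (1), I would argue that since $\sigma(\alpha)$ is non-empty (we are working with gradual elements of $\mathcal{P}(G)\setminus\{\varnothing\}$), we may pick some $s\in\sigma(\alpha)$. By the condition $\sigma^{-1}\subseteq\sigma$ we get $s^{-1}\in\sigma^{-1}(\alpha)\subseteq\sigma(\alpha)$, and then by $\sigma*\sigma\subseteq\sigma$ we get $e=s*s^{-1}\in(\sigma*\sigma)(\alpha)\subseteq\sigma(\alpha)$. This handles each $\alpha\in(0,1]$ uniformly.

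For (2), fix $\alpha\in(0,1]$; I would verify the subgroup axioms for $\sigma(\alpha)\subseteq G$ directly. It is non-empty and contains $e$ by part (1). If $s,t\in\sigma(\alpha)$, then $t^{-1}\in\sigma^{-1}(\alpha)\subseteq\sigma(\alpha)$ using $\sigma^{-1}\subseteq\sigma$, and hence $s*t^{-1}\in\sigma(\alpha)*\sigma(\alpha)=(\sigma*\sigma)(\alpha)\subseteq\sigma(\alpha)$ using $\sigma*\sigma\subseteq\sigma$. By the one-step subgroup criterion, $\sigma(\alpha)$ is a subgroup. The only subtlety here is to keep straight that the pointwise definitions $(\sigma*\sigma)(\alpha)=\sigma(\alpha)*\sigma(\alpha)$ and $\sigma^{-1}(\alpha)=\sigma(\alpha)^{-1}$, combined with the inclusions of gradual subsets interpreted pointwise on $\dom(\sigma)=(0,1]$, give exactly the set-theoretic inclusions $\sigma(\alpha)*\sigma(\alpha)\subseteq\sigma(\alpha)$ and $\sigma(\alpha)^{-1}\subseteq\sigma(\alpha)$ in $G$; this is immediate from the definitions in the excerpt.

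Finally, for the last sentence I would argue the equivalence. If $\sigma$ is a gradual subgroup, then by (2) each $\sigma(\alpha)$ lies in $\mathcal{S}(G)$, so $\sigma$ is a gradual element of $\mathcal{S}(G)$. Conversely, if $\sigma:(0,1]\longrightarrow\mathcal{S}(G)$, then for each $\alpha$ the set $\sigma(\alpha)$ is a subgroup, so $\sigma(\alpha)*\sigma(\alpha)\subseteq\sigma(\alpha)$ and $\sigma(\alpha)^{-1}\subseteq\sigma(\alpha)$; reading these pointwise statements back through the definitions of $*$ and $(-)^{-1}$ on gradual subsets yields $\sigma*\sigma\subseteq\sigma$ and $\sigma^{-1}\subseteq\sigma$, i.e.\ $\sigma$ is a gradual subgroup. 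I do not anticipate a real obstacle here; the entire argument is a routine unwinding of the pointwise definitions, and the one place to be careful is simply that the gradual-subset operations and inclusions are defined coordinatewise on all of $(0,1]$, so no domain bookkeeping beyond that is needed.
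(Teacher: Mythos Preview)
Your proof is correct and follows essentially the same approach as the paper: pick an element of $\sigma(\alpha)$, use $\sigma^{-1}\subseteq\sigma$ to obtain its inverse, and use $\sigma*\sigma\subseteq\sigma$ together with the one-step subgroup criterion. You are simply more explicit than the paper about non-emptiness, the pointwise unwinding of the gradual operations, and the converse direction, all of which the paper leaves implicit.
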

\begin{proof}
(1). %
Let $a\in\sigma(\alpha)$ then $a^{-1}\in\sigma(\alpha)$, hence $e=a*a^{-1}\in\sigma(\alpha)$.
\par (2). %
Let $a,b\in\sigma(\alpha)$ then $b^{-1}\in\sigma(\alpha)$, hence $a*b^{-1}\in\sigma(\alpha)$.
\end{proof}

If $\varepsilon$ is a gradual element of a group $G$, for any $\alpha\in(0,1]$ we define $\langle\varepsilon\rangle(\alpha)=\langle\varepsilon(\alpha)\rangle$, the gradual subgroup of $G$ \textbf{generated} by $\varepsilon$. A gradual subgroup $\sigma$ of $G$ is \textbf{cyclic} if there exists a gradual element $\varepsilon$ such that $\sigma=\langle\varepsilon\rangle$.

We may also define finitely generated gradual subgroups: a gradual subgroup $\sigma$ is \textbf{finitely generated} if there are gradual elements $\varepsilon_1,\ldots,\varepsilon_t$ such that for any $\alpha\in{L}$ we have $\sigma(\alpha)=\langle\varepsilon_1(\alpha),\ldots,\varepsilon_t(\alpha)\rangle$. We represent this $\sigma$ simply as $\langle\varepsilon_1,\ldots,\varepsilon_t\rangle$.

\begin{proposition}
Let $\sigma$ be a gradual subgroup of a group $G$. The following statements are equivalent:
\begin{enumerate}[(a)]\sepa
\item
$\sigma$ is finitely generated.
\item
There exist gradual elements $\varepsilon_1,\ldots,\varepsilon_t$ such that $\sigma=\langle\varepsilon_1,\ldots,\varepsilon_t\rangle$.
\item
There exists a positive integer $t$ such that each subgroup $\sigma(\alpha)$ can be generated by $t$ elements.
\end{enumerate}
\end{proposition}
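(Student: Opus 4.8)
The plan is to prove the cycle of implications (a) $\Leftrightarrow$ (b) and (b) $\Rightarrow$ (c) $\Rightarrow$ (b); only the last implication has real content, the rest being essentially bookkeeping against the definitions.

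First I would observe that (a) $\Leftrightarrow$ (b) is a mere reformulation of the definition: $\sigma$ being finitely generated means, by definition, that there are gradual elements $\varepsilon_1,\ldots,\varepsilon_t$ with $\sigma(\alpha)=\langle\varepsilon_1(\alpha),\ldots,\varepsilon_t(\alpha)\rangle$ for every $\alpha$, and this is exactly the meaning of $\sigma=\langle\varepsilon_1,\ldots,\varepsilon_t\rangle$ under the convention $\langle\varepsilon_1,\ldots,\varepsilon_t\rangle(\alpha)=\langle\varepsilon_1(\alpha),\ldots,\varepsilon_t(\alpha)\rangle$. Here one uses the standing convention of this section that gradual subgroups are total (extended by $\{e\}$ where undefined), so that there is no discrepancy between the domain of $\sigma$ and that of $\langle\varepsilon_1,\ldots,\varepsilon_t\rangle$. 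The implication (b) $\Rightarrow$ (c) is then immediate: from $\sigma=\langle\varepsilon_1,\ldots,\varepsilon_t\rangle$ one gets that each $\sigma(\alpha)=\langle\varepsilon_1(\alpha),\ldots,\varepsilon_t(\alpha)\rangle$ is generated by the $t$ elements $\varepsilon_1(\alpha),\ldots,\varepsilon_t(\alpha)$, so the integer $t$ works uniformly.

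The substantive step is (c) $\Rightarrow$ (b). Fix a positive integer $t$ such that each $\sigma(\alpha)$ can be generated by $t$ elements. For every $\alpha\in(0,1]$ I would choose a generating subset $\{h_1^\alpha,\ldots,h_{s_\alpha}^\alpha\}\subseteq\sigma(\alpha)$ with $s_\alpha\leq t$; since $e\in\sigma(\alpha)$ for every $\alpha$ by Proposition~\eqref{pr:20181112}, this list can be padded with copies of $e$ to a list $g_1^\alpha,\ldots,g_t^\alpha$ of exactly $t$ elements of $\sigma(\alpha)$ still generating $\sigma(\alpha)$. Performing this selection simultaneously over all $\alpha\in(0,1]$ invokes the axiom of choice. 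Then for $i=1,\ldots,t$ define the total gradual element $\varepsilon_i\colon(0,1]\longrightarrow G$ by $\varepsilon_i(\alpha)=g_i^\alpha$; since a gradual element is merely a map with no coherence requirement across levels, each $\varepsilon_i$ is a bona fide gradual element, and $\varepsilon_i\in\sigma$ because $g_i^\alpha\in\sigma(\alpha)$. Finally $\langle\varepsilon_1,\ldots,\varepsilon_t\rangle(\alpha)=\langle g_1^\alpha,\ldots,g_t^\alpha\rangle=\sigma(\alpha)$ for every $\alpha$, i.e. $\sigma=\langle\varepsilon_1,\ldots,\varepsilon_t\rangle$, which is (b).

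I do not expect a serious obstacle here: the whole content is a level-by-level choice of generators glued into gradual elements. The only two points that need care are (i) normalizing the number of generators to be exactly $t$, for which one needs $e\in\sigma(\alpha)$ from Proposition~\eqref{pr:20181112}, and (ii) emphasizing that, in contrast with decreasing gradual subgroups, an arbitrary gradual element imposes no compatibility between distinct levels — it is exactly this freedom that makes (c) $\Rightarrow$ (b) go through, and it is worth a sentence in the write-up.
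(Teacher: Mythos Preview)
Your argument is correct. The paper itself states this proposition without proof, presumably regarding it as immediate from the definitions and from Proposition~\ref{pr:20181112}; your write-up supplies exactly the details one would expect, with the only nontrivial step being (c) $\Rightarrow$ (b), handled by a levelwise choice of generators padded by $e$ and assembled into total gradual elements. There is nothing to compare against, and no gap in what you propose.
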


Observe that due to Proposition~\eqref{pr:20181112}, gradual subgroups of $G$ can be identify with subgroups of the direct product, indexed in $(0,1]$, of copies of $G$.

\subsection{Normal gradual subgroups}

By the afore mentioned identification the study of gradual subgroups is very simple. Thus, a gradual subgroup of a group $G$ is \textbf{normal} if for any $\alpha\in(0,1]$ we have that $\sigma(\alpha)\subseteq{G}$ is a normal subgroup of $G$. If $\sigma$ is a normal gradual subgroup of $G$, for every $\alpha\in(0,1]$ we have a quotient group $G/\sigma(\alpha)$.

Let $\sigma$ be a normal gradual subgroup of $G$, for every $\alpha\in(0,1]$ we have a gradual quotient group $G/\sigma(\alpha)$, hence a gradual quotient set $\rho$ of $G$, defined as $\rho(\alpha)=G/\sigma(\alpha)$, for every $\alpha\in(0,1]$. We may represent it also by $G/\sigma$. For every $\alpha\in(0,1]$ there is a group homomorphism $G\longrightarrow{G/\sigma(\alpha)}=(G/\sigma)(\alpha)$.

We define a \textbf{gradual quotient group} of $G$ a gradual quotient set $\eta$ of $G$ such that for every $\alpha\in(0,1]$ the projection $p(\alpha):G\longrightarrow\eta(\alpha)$ is a group homomorphism.

\begin{proposition}
Let $G$ be a group, then
\begin{enumerate}[(1)]\sepa
\item
For every normal gradual subgroup $\sigma$ of $G$ we have $G/\sigma$ is a gradual quotient group of $G$.
\item
For every gradual quotient group $\eta$ of $G$ there is a normal gradual subgroup $\kappa$ of $G$, defined $\kappa(\alpha)=\Ker(G\longrightarrow\eta(\alpha))$, for any $\alpha\in(0,1]$.
\end{enumerate}
\end{proposition}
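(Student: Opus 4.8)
The plan is to handle the two items separately; each reduces to the classical correspondence between normal subgroups and quotient groups applied level by level in $\alpha$, together with a check that the family produced is a genuine gradual object with the right (inf--compact, containing $1$) domain.

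For (1), let $\sigma$ be a normal gradual subgroup of $G$. By Proposition~\eqref{pr:20181112}, $\sigma$ is a gradual element of $\mathcal{S}(G)$, i.e. a map $\sigma\colon\dom(\sigma)\longrightarrow\mathcal{S}(G)$ on an inf--compact subset containing $1$, and by hypothesis each value $\sigma(\alpha)$ is a \emph{normal} subgroup of $G$. First I would set $\rho(\alpha)=G/\sigma(\alpha)$ together with the canonical projection $p(\alpha)\colon G\longrightarrow G/\sigma(\alpha)$. Since $\sigma(\alpha)$ is normal, $p(\alpha)$ is a surjective group homomorphism, so the pair $(G/\sigma(\alpha),p(\alpha))$ is an element of $\mathcal{Q}(G)$, with associated equivalence relation the congruence $x\mathrel{R}y\iff x\sigma(\alpha)=y\sigma(\alpha)$. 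Hence $\rho=G/\sigma\colon\dom(\sigma)\longrightarrow\mathcal{Q}(G)$ is a gradual quotient set of $G$ with the same inf--compact domain as $\sigma$, and by construction each projection $p(\alpha)$ is a group homomorphism; this is precisely the definition of a gradual quotient group, which proves (1).

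For (2), let $\eta$ be a gradual quotient group of $G$, i.e. a gradual element of $\mathcal{Q}(G)$ such that for every $\alpha\in\dom(\eta)$ the projection $p(\alpha)\colon G\longrightarrow\eta(\alpha)$ is a group homomorphism. I would put $\kappa(\alpha)=\Ker(p(\alpha))$. Each $\kappa(\alpha)$ is a normal subgroup of $G$, so $\kappa\colon\dom(\eta)\longrightarrow\mathcal{S}(G)$ takes values in the normal subgroups; since $\dom(\eta)$ is inf--compact and contains $1$, $\kappa$ is a gradual element of $\mathcal{S}(G)$, hence a gradual subgroup of $G$ by Proposition~\eqref{pr:20181112}, and it is normal since all its values are. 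That is the assertion of (2). To close the loop I would also remark that the two constructions are mutually inverse up to isomorphism: for a normal gradual subgroup $\sigma$ one has $\Ker(G\longrightarrow G/\sigma(\alpha))=\sigma(\alpha)$ for every $\alpha$, while for a gradual quotient group $\eta$ the first isomorphism theorem yields $G/\kappa(\alpha)\cong\eta(\alpha)$ compatibly in $\alpha$.

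I do not expect a real obstacle: both halves are the classical subgroup/quotient dictionary applied pointwise. The only point worth stating explicitly — and the thing that takes the statement beyond a triviality about families — is the bookkeeping about gradual objects: one must verify that $\alpha\mapsto G/\sigma(\alpha)$, respectively $\alpha\mapsto\Ker p(\alpha)$, is a gradual element of $\mathcal{Q}(G)$, respectively $\mathcal{S}(G)$, in the sense fixed earlier (same inf--compact domain containing $1$, values in the correct subset of $\mathcal{P}(G\times G)$ or of $\mathcal{P}(G)$), and that ``the projection is a group homomorphism'' is preserved — which is immediate from normality in (1) and is a hypothesis in (2).
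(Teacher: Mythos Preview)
Your argument is correct: both parts are the classical normal-subgroup/quotient dictionary applied level by level, together with the observation that the resulting families are gradual elements of $\mathcal{Q}(G)$, respectively $\mathcal{S}(G)$. The paper states this proposition without proof, so there is nothing to compare against; your write-up simply supplies the routine verification the authors evidently regarded as immediate, and your extra remark on mutual inverseness (via the first isomorphism theorem) goes slightly beyond what is asserted but is a natural complement.
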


\begin{lemma}
Let $f:G\longrightarrow{G'}$ be a group homomorphism,
\begin{enumerate}[(1)]\sepa
\item
For any gradual subgroup $\sigma$ of $G$ we have that $f_*\sigma:(0,1]\longrightarrow{G'}$, defined $f_*(\sigma)(\alpha)=f(\sigma(\alpha))$ is a gradual subgroup of $G'$.
\item
For any gradual subgroup $\tau$ of $G'$ we have that $f^*\tau:(0,1]\longrightarrow{G}$, defined $f^*\tau(\alpha)=f^*(\tau(\alpha))$ is a gradual subgroup of $G$.
\item
If $\tau$ is normal, then $f^*\tau$ is normal.
\end{enumerate}
\end{lemma}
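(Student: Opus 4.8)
The plan is to verify the three assertions directly, using the identification from Proposition~\eqref{pr:20181112} that a gradual subgroup is exactly a family of subgroups indexed by $(0,1]$, so that each claim reduces to a pointwise statement about ordinary group homomorphisms. For part (1), I would fix $\alpha\in(0,1]$ and observe that $f_*(\sigma)(\alpha)=f(\sigma(\alpha))$ is the image of a subgroup of $G$ under the group homomorphism $f$, hence a subgroup of $G'$; one then checks that the assignment $\alpha\mapsto f(\sigma(\alpha))$ respects the operations on gradual subsets, i.e.\ $(f_*\sigma \cdot f_*\sigma)(\alpha)=f(\sigma(\alpha))f(\sigma(\alpha))=f(\sigma(\alpha)\sigma(\alpha))\subseteq f(\sigma(\alpha))$ and similarly $f_*(\sigma)^{-1}(\alpha)=f(\sigma(\alpha))^{-1}=f(\sigma(\alpha)^{-1})\subseteq f(\sigma(\alpha))$, which gives conditions (1) and (2) in the definition of gradual subgroup.

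For part (2), the argument is the same with preimages: $f^*\tau(\alpha)=f^{-1}(\tau(\alpha))$ is the preimage of a subgroup under a group homomorphism, hence a subgroup of $G$, and the inclusions $f^{-1}(\tau(\alpha))f^{-1}(\tau(\alpha))\subseteq f^{-1}(\tau(\alpha)\tau(\alpha))\subseteq f^{-1}(\tau(\alpha))$ and $f^{-1}(\tau(\alpha))^{-1}=f^{-1}(\tau(\alpha)^{-1})\subseteq f^{-1}(\tau(\alpha))$ follow from $f$ being a homomorphism together with the fact that $\tau$ is a gradual subgroup. One should also note that $f^*\tau$ is a genuine gradual subset in the sense used in this section, i.e.\ each $f^{-1}(\tau(\alpha))$ is non-empty, which holds since $e\in\tau(\alpha)$ by Proposition~\eqref{pr:20181112}(1), so $e\in f^{-1}(\tau(\alpha))$.

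For part (3), I would again work pointwise: if $\tau$ is normal then $\tau(\alpha)\trianglelefteq G'$ for every $\alpha$, and it is a standard fact that the preimage of a normal subgroup under a group homomorphism is normal; explicitly, for $g\in G$ and $x\in f^{-1}(\tau(\alpha))$ we have $f(gxg^{-1})=f(g)f(x)f(g)^{-1}\in\tau(\alpha)$ since $f(x)\in\tau(\alpha)$ and $\tau(\alpha)\trianglelefteq G'$, so $gxg^{-1}\in f^{-1}(\tau(\alpha))$. Hence $(f^*\tau)(\alpha)\trianglelefteq G$ for all $\alpha$, which by the definition of a normal gradual subgroup means $f^*\tau$ is normal.

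None of the three steps presents a real obstacle; the only thing worth being careful about is the bookkeeping around the convention, stated at the beginning of this section, that a gradual subset here means a gradual element of $\mathcal{P}(X)\setminus\{\varnothing\}$, so one must check that images and preimages of the $\sigma(\alpha)$ (resp.\ $\tau(\alpha)$) remain non-empty — which is immediate from the fact that they contain the relevant neutral element. Everything else is the pointwise transcription of elementary facts about group homomorphisms, images, and preimages, so the proof is essentially a one-line reduction to those facts applied uniformly in $\alpha$.
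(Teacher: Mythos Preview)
Your proposal is correct and is exactly the natural pointwise argument; in fact the paper states this lemma without proof, treating it as an immediate consequence of the identification in Proposition~\ref{pr:20181112} of gradual subgroups with $(0,1]$--indexed families of subgroups. Your care about non-emptiness (via $e\in\sigma(\alpha)$) is a nice touch that the paper does not bother to spell out.
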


Let $\sigma_1,\sigma_2$ be gradual subgroups of $G$, we define $\sigma_1\subseteq\sigma_2$, and say $\sigma_1$ is a \textbf{subgroup} of $\sigma_2$, if $\sigma_1(\alpha)\subseteq\sigma_2(\alpha)$ for any $\alpha\in(0,1]$.

\begin{lemma}
Let $\sigma_1,\sigma_2$ be normal gradual subgroups of $G$, the following statements are equivalent:
\begin{enumerate}[(a)]\sepa
\item
$\sigma_1\subseteq\sigma_2$,
\item
For any $\alpha\in(0,1]$ there exist group homomorphisms $h_\alpha$ such that the following diagrams commute:
\[
\begin{xy}
\xymatrix{
1\ar[r]&\sigma_1(\alpha)\ar[rr]\ar@{^(->}[d]&&G\ar[rr]\ar@{=}[d]\ar[rr]&&G/\sigma_1(\alpha)\ar@{-->}[d]^{h_\alpha}\ar[r]&0\\
1\ar[r]&\sigma_2(\alpha)\ar[rr]&&G\ar[rr]&&G/\sigma_2(\alpha)\ar[r]&0
}\end{xy}
\]
\end{enumerate}
\end{lemma}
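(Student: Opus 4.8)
The plan is to notice that both (a) and (b) are asserted pointwise in $\alpha$, so it suffices to fix $\alpha\in(0,1]$ and prove that $\sigma_1(\alpha)\subseteq\sigma_2(\alpha)$ is equivalent to the existence of a group homomorphism $h_\alpha:G/\sigma_1(\alpha)\longrightarrow G/\sigma_2(\alpha)$ making the displayed diagram of short exact sequences commute. This reduces the statement to the classical correspondence between inclusions of normal subgroups and the canonical projections of the associated quotients. Throughout I would write $\pi_i:G\longrightarrow G/\sigma_i(\alpha)$ for the canonical projections; these are group homomorphisms because each $\sigma_i$ is a normal gradual subgroup, so by Proposition~\eqref{pr:20181112} (and the definition of normal gradual subgroup) each $\sigma_i(\alpha)$ is a normal subgroup of $G$.

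For (a) $\Rightarrow$ (b), assuming $\sigma_1(\alpha)\subseteq\sigma_2(\alpha)$, I would invoke the universal property of the quotient group $G/\sigma_1(\alpha)$: since $\sigma_1(\alpha)\subseteq\sigma_2(\alpha)=\Ker\pi_2$, the homomorphism $\pi_2$ factors uniquely through $\pi_1$, giving a homomorphism $h_\alpha:G/\sigma_1(\alpha)\longrightarrow G/\sigma_2(\alpha)$ with $h_\alpha\pi_1=\pi_2$. This is exactly the commutativity of the right-hand square; the middle square commutes because the central vertical arrow is $\id_G$, and the left-hand square commutes because $\sigma_1(\alpha)\hookrightarrow\sigma_2(\alpha)$ is the inclusion and both horizontal maps are inclusions into $G$. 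Finally $h_\alpha$ is surjective since $\pi_2$ is, so the bottom row stays exact. Carrying this out for every $\alpha$ produces the required family $\{h_\alpha\}$.

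For (b) $\Rightarrow$ (a), from the commutativity of the diagram the right-hand square yields $h_\alpha\pi_1=\pi_2$. Hence for every $x\in\sigma_1(\alpha)=\Ker\pi_1$ we get $\pi_2(x)=h_\alpha(\pi_1(x))=h_\alpha(1)=1$, i.e.\ $x\in\Ker\pi_2=\sigma_2(\alpha)$; thus $\sigma_1(\alpha)\subseteq\sigma_2(\alpha)$. (Equivalently, the commutativity of the left-hand square together with exactness of the rows already exhibits this inclusion.) Since this holds for all $\alpha\in(0,1]$, we conclude $\sigma_1\subseteq\sigma_2$ by definition.

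I expect no real obstacle here: the content is the standard universal-property argument applied levelwise and then quantified over $\alpha\in(0,1]$. The only points requiring care are reading ``the diagram commutes'' correctly as the conjunction of the three square-commutativities with $\id_G$ in the middle column, and keeping the direction of $h_\alpha$ straight (the smaller subgroup sits on top, so $h_\alpha$ runs $G/\sigma_1(\alpha)\to G/\sigma_2(\alpha)$, parallel to the identity $G\to G$).
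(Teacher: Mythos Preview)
Your argument is correct: the equivalence is indeed a levelwise instance of the standard fact that for normal subgroups $N_1,N_2\trianglelefteq G$ one has $N_1\subseteq N_2$ if and only if the canonical projection $G\to G/N_2$ factors through $G\to G/N_1$. Your handling of both directions via $\Ker\pi_1\subseteq\Ker\pi_2$ and the universal property of the quotient is exactly the expected proof.

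As for comparison with the paper: the lemma is stated there without proof, presumably because the authors regard it as an immediate consequence of the elementary quotient-group fact you invoke. Your write-up supplies precisely the details one would fill in, so there is nothing to contrast; your approach \emph{is} the intended one.

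One minor remark on the formulation rather than on your proof: in the diagram of (b) the left vertical arrow is drawn as an inclusion $\sigma_1(\alpha)\hookrightarrow\sigma_2(\alpha)$, so strictly speaking the very existence of that arrow already encodes (a). Your choice to argue (b)$\Rightarrow$(a) from the right-hand square alone (i.e.\ from $h_\alpha\pi_1=\pi_2$ and the resulting kernel inclusion) is therefore the cleaner route, since it does not rely on how one reads the left column.
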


\begin{remark}
In consequence, to include $G/\sigma_2$ inside this theory we could introduce the notion of gradual quotient group of a gradual quotient group $G/\sigma_1$, hence study gradual objects which are not related to an ambient group. The same can be done in considering gradual subgroups. Thus, we could introduce the notion of gradual group or enriched gradual group, in a similar way as we did for gradual subsets and sets.
\end{remark}

If $\sigma_1$ and $\sigma_2$ are gradual subgroups of $G$, we have a gradual subset $\sigma_1\sigma_2$ of $G$, and not necessarily a gradual subgroup; we get a gradual subgroup whenever one of them is normal and, in this case, we have:

\begin{lemma}
Let $\sigma_1,\sigma_2$ be gradual subgroups of $G$ such that $\sigma_1$ is normal, then
\begin{enumerate}[(1)]\sepa
\item
$\sigma_1\sigma_2$ is a gradual subgroup of $G$.
\item
$\frac{\sigma_1\sigma_2}{\sigma_1}$, defined $\frac{\sigma_1\sigma_2}{\sigma_1}(\alpha)=\frac{\sigma_1(\alpha)\sigma_2(\alpha)}{\sigma_1(\alpha)}$ is a gradual subgroup of $\frac{G}{\sigma_1}$.
\end{enumerate}
\end{lemma}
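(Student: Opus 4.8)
The statement to prove is: if $\sigma_1, \sigma_2$ are gradual subgroups of $G$ with $\sigma_1$ normal, then (1) $\sigma_1\sigma_2$ is a gradual subgroup of $G$, and (2) $\frac{\sigma_1\sigma_2}{\sigma_1}$, defined by $\frac{\sigma_1\sigma_2}{\sigma_1}(\alpha)=\frac{\sigma_1(\alpha)\sigma_2(\alpha)}{\sigma_1(\alpha)}$, is a gradual subgroup of $\frac{G}{\sigma_1}$. The plan is to reduce everything to the classical componentwise statements, since by Proposition~\eqref{pr:20181112} a gradual subgroup is nothing more than a gradual element of $\mathcal{S}(G)$, i.e.\ an assignment $\alpha\mapsto\sigma(\alpha)$ with each $\sigma(\alpha)$ a subgroup of $G$.

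For (1), I would first observe that since $\sigma_1$ is normal, $\sigma_1(\alpha)$ is a normal subgroup of $G$ for every $\alpha\in(0,1]$, and $\sigma_2(\alpha)$ is a subgroup of $G$. The classical fact that the product of a normal subgroup with any subgroup is again a subgroup then gives that $(\sigma_1\sigma_2)(\alpha)=\sigma_1(\alpha)\sigma_2(\alpha)$ is a subgroup of $G$ for every $\alpha$. Hence $\sigma_1\sigma_2$ is a gradual element of $\mathcal{S}(G)$, which by Proposition~\eqref{pr:20181112} is exactly a gradual subgroup of $G$. Alternatively one can verify the two defining conditions $(\sigma_1\sigma_2)*(\sigma_1\sigma_2)\subseteq\sigma_1\sigma_2$ and $(\sigma_1\sigma_2)^{-1}\subseteq\sigma_1\sigma_2$ directly, evaluating at each $\alpha$ and using normality of $\sigma_1(\alpha)$ to move factors past each other; but routing through the classical componentwise statement is cleaner.

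For (2), I would again argue componentwise. By part (1), $\sigma_1\sigma_2$ is a gradual subgroup, and $\sigma_1\subseteq\sigma_1\sigma_2$ with $\sigma_1$ normal, so for each $\alpha$ the quotient $\frac{\sigma_1(\alpha)\sigma_2(\alpha)}{\sigma_1(\alpha)}$ makes sense and, being the image of the subgroup $\sigma_1(\alpha)\sigma_2(\alpha)$ of $G$ under the canonical projection $G\longrightarrow G/\sigma_1(\alpha)=(G/\sigma_1)(\alpha)$, is a subgroup of $G/\sigma_1(\alpha)$. Thus $\alpha\mapsto\frac{\sigma_1(\alpha)\sigma_2(\alpha)}{\sigma_1(\alpha)}$ is a gradual element of $\mathcal{S}(G/\sigma_1)$, i.e.\ a gradual subgroup of the gradual quotient group $G/\sigma_1$. (Implicitly one uses the second isomorphism theorem at each level to see this subgroup as $\sigma_2(\alpha)/(\sigma_1(\alpha)\cap\sigma_2(\alpha))$, but that identification is not needed for the bare statement.)

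I do not expect a genuine obstacle here: the content is entirely the classical theory of products of subgroups and of the second isomorphism theorem, applied level by level, with Proposition~\eqref{pr:20181112} doing the translation between "gradual subgroup" and "gradual element of $\mathcal{S}(G)$." The only point requiring a little care is to make sure the operations $\sigma_1\sigma_2$ and $G/\sigma_1$ have already been given meaning in the text (they have: the pointwise product of gradual subsets, and the gradual quotient set $(G/\sigma_1)(\alpha)=G/\sigma_1(\alpha)$), so that the claim is purely that these constructions land in the subcategory of gradual subgroups.
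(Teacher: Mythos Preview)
Your proposal is correct. The paper states this lemma without proof, presumably because it is regarded as an immediate componentwise translation of the classical facts about products of subgroups with a normal factor and the second isomorphism theorem; your argument via Proposition~\ref{pr:20181112} is exactly that translation, and is the natural proof the reader is meant to supply.
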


This theory can be enriched whenever we consider maps between the different $\sigma(\alpha)$'s, i.e., enriched gradual subgroups. For instance, when there is an inclusion $\sigma(\beta)\subseteq\sigma(\alpha)$ whenever $\alpha\leq\beta$.

\subsection{Decreasing gradual subgroups}

A gradual subgroup $\sigma$ of $G$ is \textbf{decreasing} if for any $\alpha,\beta\in(0,1]$ such that $\alpha\leq\beta$, we have $\sigma(\beta)\subseteq\sigma(\alpha)$. For any decreasing gradual subgroup $\sigma$ of $G$,  for every $\alpha\in(0,1]$, we have that $\sigma(1)\subseteq\sigma(\alpha)$.

Let $\sigma$ be a gradual subgroup, we define the \textbf{accumulation} $\sigma^c$ of $\sigma$ as
\[
\sigma^c(\alpha)=\langle\cup\{\sigma(\beta)\mid\;\beta\geq\alpha\}\rangle,\textrm{ for any }\alpha\in(0,1].
\]
It is clear that $\sigma^c$ is a decreasing gradual subgroup, and a gradual subgroup $\sigma$ is decreasing if, and only if, $\sigma=\sigma^c$. In particular, we have the following properties of the operator $\sigma\mapsto\sigma^c$.

\begin{lemma}\label{le:20180421}
Let $G$ be a group, for every gradual subgroups $\sigma,\sigma_1,\sigma_2$ the following statements hold:
\begin{enumerate}[(1)]\sepa
\item
$\sigma\subseteq\sigma^c$.
\item
$\sigma^c=\sigma^{cc}$.
\item
If $\sigma_1\subseteq\sigma_2$, then $\sigma_1^c\subseteq\sigma_2^c$.
\item
$\sigma^c$ is the smallest decreasing gradual subgroup containing $\sigma$.
\item
If $\sigma$ is a normal gradual subgroup, then $\sigma^c$ is a normal subgroup, and $\sigma^c(\alpha)$ is the set of all products of elements in $\cup\{\sigma(\beta)\mid\;\beta\geq\alpha\}$ for any $\alpha\in(0,1]$.
\end{enumerate}
\end{lemma}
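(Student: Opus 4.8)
The plan is to read (1)--(4) as the verification that $c$ is a closure operator on the poset of gradual subgroups, deferring the real group-theoretic content to (5) and (6). For (1), for every $\alpha$ we have $\sigma_1(\alpha)\subseteq\cup\{\sigma_1(\beta)\mid\beta\geq\alpha\}\subseteq\langle\cup\{\sigma_1(\beta)\mid\beta\geq\alpha\}\rangle=\sigma_1^c(\alpha)$. For (3), $\sigma_1\subseteq\sigma_2$ gives $\cup\{\sigma_1(\beta)\mid\beta\geq\alpha\}\subseteq\cup\{\sigma_2(\beta)\mid\beta\geq\alpha\}$, and passing to generated subgroups yields $\sigma_1^c(\alpha)\subseteq\sigma_2^c(\alpha)$. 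For (2), I would first note that $\sigma_1^c$ is decreasing (enlarging $\alpha$ shrinks the index set $\{\beta\geq\alpha\}$, hence shrinks both the union and the subgroup it generates); since, as recalled just before the lemma, a gradual subgroup equals its accumulation exactly when it is decreasing, we get $\sigma_1^{cc}=\sigma_1^c$. Finally (4): if $\tau$ is decreasing with $\sigma_1\subseteq\tau$, then by (3) and (2) $\sigma_1^c\subseteq\tau^c=\tau$, while $\sigma_1^c$ is itself a decreasing gradual subgroup containing $\sigma_1$ by (1).

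For (5), fix $\alpha$ and set $U_\alpha=\cup\{\sigma_1(\beta)\mid\beta\geq\alpha\}$. I claim the set $P_\alpha$ of all finite products $x_1x_2\cdots x_k$ with each $x_i\in U_\alpha$ is already a subgroup: it is closed under products by concatenation, and closed under inverses since $(x_1\cdots x_k)^{-1}=x_k^{-1}\cdots x_1^{-1}$ with each $x_i^{-1}$ lying in the same $\sigma_1(\beta_i)$ (each component is a subgroup). Being the smallest subgroup containing $U_\alpha$, $P_\alpha=\langle U_\alpha\rangle=\sigma_1^c(\alpha)$, which is exactly the stated description. Normality of $\sigma_1$ then enters: for $g\in G$, $g(x_1\cdots x_k)g^{-1}=(gx_1g^{-1})\cdots(gx_kg^{-1})$ with $gx_ig^{-1}\in g\sigma_1(\beta_i)g^{-1}=\sigma_1(\beta_i)\subseteq U_\alpha$, so $P_\alpha=\sigma_1^c(\alpha)$ is normal in $G$ for every $\alpha$, i.e. $\sigma_1^c$ is a normal gradual subgroup.

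For (6), assume without loss of generality $\sigma_1$ normal; then by (5) $\sigma_1^c$ is normal, so each product $\sigma_1^c(\alpha)\sigma_2^c(\alpha)$ is a subgroup of $G$, and $(\sigma_1^c\sigma_2^c)(\alpha)=\sigma_1^c(\alpha)\sigma_2^c(\alpha)$, while $(\sigma_1\sigma_2)^c(\alpha)=\langle\cup\{\sigma_1(\beta)\sigma_2(\beta)\mid\beta\geq\alpha\}\rangle$. For ``$\subseteq$'': any generator $x\in\sigma_1(\beta)$ of $\sigma_1^c(\alpha)$ is $x=x\cdot e\in\sigma_1(\beta)\sigma_2(\beta)$ (as $e\in\sigma_2(\beta)$), so $\sigma_1^c(\alpha)\subseteq(\sigma_1\sigma_2)^c(\alpha)$, and symmetrically $\sigma_2^c(\alpha)\subseteq(\sigma_1\sigma_2)^c(\alpha)$; since the right-hand side is a subgroup it then contains the product $\sigma_1^c(\alpha)\sigma_2^c(\alpha)$. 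For ``$\supseteq$'': for $\beta\geq\alpha$ we have $\sigma_1(\beta)\subseteq\sigma_1^c(\alpha)$ and $\sigma_2(\beta)\subseteq\sigma_2^c(\alpha)$, hence $\sigma_1(\beta)\sigma_2(\beta)\subseteq\sigma_1^c(\alpha)\sigma_2^c(\alpha)$, and as the latter is a subgroup it contains $\langle\cup\{\sigma_1(\beta)\sigma_2(\beta)\mid\beta\geq\alpha\}\rangle=(\sigma_1\sigma_2)^c(\alpha)$.

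The main obstacle is (5), and correspondingly the reliance of (6) on it: both come down to the standard-but-not-free facts that the subgroup generated by a family of normal subgroups consists of finite products and is again normal, and that the product of a normal subgroup with any subgroup is a subgroup. Once (5) is established these are what make the two inclusions in (6) go through, and (1)--(4) are purely formal.
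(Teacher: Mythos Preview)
Your proof is correct and follows essentially the same approach as the paper. The paper leaves (1)--(4) unproved (they mirror the set-theoretic case) and gives only terse arguments for (5) and (6); your write-up supplies the missing details, in particular making explicit that the product description in (5) already follows from each $\sigma_1(\beta)$ being a subgroup (so the generating set is inverse-closed), with normality needed only for the conjugation argument and then, via (5), for the subgroup property of $\sigma_1^c(\alpha)\sigma_2^c(\alpha)$ in (6).
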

\begin{proof}
(5). %
Each element of $\langle\{\cup\sigma(\beta)\mid\;\beta\geq\alpha\}\rangle$ is a product $a_1*\cdots*a_t$, for some $a_i\in\sigma(\beta_i)$, and $\beta_i\geq\alpha$. For any $b\in{G}$ we have $g*a_1*\cdots*a_t*g^{-1}=(g*a_1*g^{-1})*\cdots*(g*a_t*g^{-1})\in\langle\{\cup\sigma(\beta)\mid\;\beta\geq\alpha\}\rangle$.
\end{proof}

This means that the map $\sigma\mapsto\sigma^c$ is a closure operator in the set $\mathcal{G}$ of all gradual subgroups of $G$ which is compatible with the product in $G$. The set of all $c$--closed gradual subgroups of $G$ is denoted by $\mathcal{J}(G)$.

\begin{proposition}\label{pr:20180421}
Let $\sigma_1,\sigma_2$ be gradual subgroups of $G$, then $\langle\sigma_1\sigma_2\rangle^c=\langle\sigma_1^c\sigma_2^c\rangle$. In addition, if either $\sigma_1$ or $\sigma_2$ is normal, then $(\sigma_1\sigma_2)^c=\sigma_1^c\sigma_2^c$.
\end{proposition}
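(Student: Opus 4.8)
The plan is to reduce both identities to the explicit description of the accumulation operator obtained in Lemma~\eqref{le:20180421}, arguing level by level. Fix $\alpha\in(0,1]$. Unwinding the definitions, $\langle\sigma_1\sigma_2\rangle^c(\alpha)$ is the subgroup of $G$ generated by $\cup\{\sigma_1(\beta)\sigma_2(\beta)\mid\;\beta\geq\alpha\}$ (the inner brackets coming from $\langle\sigma_1\sigma_2\rangle(\beta)=\langle\sigma_1(\beta)\sigma_2(\beta)\rangle$ are absorbed into the outer $\langle\;\rangle$), whereas $\langle\sigma_1^c\sigma_2^c\rangle(\alpha)=\langle\sigma_1^c(\alpha)\sigma_2^c(\alpha)\rangle$, and by definition $\sigma_i^c(\alpha)=\langle\cup\{\sigma_i(\beta)\mid\;\beta\geq\alpha\}\rangle$.

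For the inclusion $\subseteq$: since $\sigma_i\subseteq\sigma_i^c$ we get $\sigma_1(\beta)\sigma_2(\beta)\subseteq\sigma_1^c(\alpha)\sigma_2^c(\alpha)$ for every $\beta\geq\alpha$, hence $\langle\sigma_1\sigma_2\rangle^c(\alpha)\subseteq\langle\sigma_1^c(\alpha)\sigma_2^c(\alpha)\rangle$. (Equivalently: $\langle\sigma_1^c\sigma_2^c\rangle$ is a decreasing gradual subgroup — because $\sigma_1^c,\sigma_2^c$ are decreasing — containing $\langle\sigma_1\sigma_2\rangle$, so Lemma~\eqref{le:20180421}(4) applies.) For the inclusion $\supseteq$: by Proposition~\eqref{pr:20181112}(1) we have $e\in\sigma_i(\beta)$ for all $\beta$, so $\sigma_1(\beta)=\sigma_1(\beta)\{e\}\subseteq\sigma_1(\beta)\sigma_2(\beta)$ and likewise $\sigma_2(\beta)\subseteq\sigma_1(\beta)\sigma_2(\beta)$; therefore every generator of $\sigma_i^c(\alpha)$ lies in $\langle\sigma_1\sigma_2\rangle^c(\alpha)$, and hence so does every product of such generators, i.e. $\langle\sigma_1^c(\alpha)\sigma_2^c(\alpha)\rangle\subseteq\langle\sigma_1\sigma_2\rangle^c(\alpha)$. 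This gives the first identity.

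For the second identity, assume without loss of generality that $\sigma_1$ is normal. Then $\sigma_1\sigma_2$ is already a gradual subgroup of $G$ (by the lemma on products of gradual subgroups preceding Lemma~\eqref{le:20180421}), so $\langle\sigma_1\sigma_2\rangle=\sigma_1\sigma_2$ and $(\sigma_1\sigma_2)^c$ is well defined; moreover $\sigma_1^c$ is normal by Lemma~\eqref{le:20180421}(5), so $\sigma_1^c\sigma_2^c$ is a gradual subgroup and $\langle\sigma_1^c\sigma_2^c\rangle=\sigma_1^c\sigma_2^c$. Substituting these two equalities into the first identity yields $(\sigma_1\sigma_2)^c=\sigma_1^c\sigma_2^c$; alternatively this is precisely Lemma~\eqref{le:20180421}(6).

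The only step that is not pure bookkeeping is the inclusion $\supseteq$ in the first identity: one must see that the pointwise products $\sigma_1(\beta)\sigma_2(\beta)$, as $\beta$ ranges over $[\alpha,1]$, jointly generate the same subgroup as the two separate accumulations $\sigma_1^c(\alpha)$ and $\sigma_2^c(\alpha)$. This hinges entirely on each $\sigma_i(\beta)$ containing the neutral element, which lets a factor always be padded by $e$; everything else is immediate from the definition of $\sigma^c$ and the already-established properties of the closure operator $c$.
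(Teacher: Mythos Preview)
Your proof is correct and follows essentially the same approach as the paper: the paper's one-line argument observes that both $\langle\sigma_1\sigma_2\rangle^c(\alpha)$ and $\langle\sigma_1^c\sigma_2^c\rangle(\alpha)$ equal the subgroup generated by $\cup\{\sigma_1(\beta)\cup\sigma_2(\beta)\mid\;\beta\geq\alpha\}$, which is exactly what your two inclusions establish (the padding-by-$e$ trick being the implicit content of that identification). Your treatment of the normal case, reducing it to Lemma~\eqref{le:20180421}(5),(6) together with the fact that $\sigma_1\sigma_2$ is already a gradual subgroup, is also in line with the paper.
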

\begin{proof}
In fact, we have that both, $\langle\sigma_1\sigma_2\rangle^c(\alpha)$ and $\langle\sigma_1^c\sigma_2^c\rangle(\alpha)$, are the subgroup generated by the subset $\cup\{\sigma_1(\beta)\cup\sigma_2(\beta)\mid\;\beta\geq\alpha\}$.
\par
Since $\sigma_1$ is normal each element of $(\sigma_1*\sigma_2)^c(\alpha)$ is a product $a*b$, for $a\in\sigma_1^c(\beta)$ and $b\in\sigma_2^c(\gamma)$, form some $\beta,\gamma\geq\alpha$, then $a*b\in\sigma_1^c(\alpha)*\sigma_2^c(\gamma)$. The converse is similar.
\end{proof}

In the same way, we may define the \textbf{strict accumulation} $\sigma^d$ of $\sigma$ as
\[
\sigma^d(\alpha)=\left\{\begin{array}{ll}
\sigma(1),&\textrm{ if }\alpha=1,\\
\langle\cup\{\sigma(\beta)\mid\;\beta>\alpha\}\rangle,&\textrm{ for any }\alpha\in(0,1).
\end{array}\right.
\]
We have that $\sigma^d$ is a decreasing gradual subgroup and $\sigma^d$ is normal whenever $\sigma$ is. Some properties of the operator $\sigma\mapsto\sigma^d$ are the following, whose proof is similar to the proof of Lemma~\eqref{le:20180421}, and Proposition~\eqref{pr:20180421}.

\begin{lemma}\label{le:20180423b}
Let $G$ be a group, for any gradual subgroups $\sigma,\sigma_1,\sigma_2$ the following statements hold:
\begin{enumerate}[(1)]\sepa
\item
$\sigma^d\subseteq\sigma^c$.
\item
If $\sigma_1\subseteq\sigma_2$, then $\sigma_1^d\subseteq\sigma_2^d$.
\item
$\sigma^d=\sigma^{dd}=\sigma^{cd}=(\sigma^c)^d$.
\item
If $\sigma$ is a normal gradual subgroup, then $\sigma^d$ is a normal subgroup, and $\sigma^c(\alpha)$  is the set of all products of elements in $\cup\{\sigma(\beta)\mid\;\beta>\alpha\}$ for any $\alpha\in(0,1]$.
\item
$(\sigma_1\sigma_2)^d=\langle\sigma_1^d\sigma_2^d\rangle$. In addition, if either $\sigma_1$ or $\sigma_2$ is normal, then $(\sigma_1\sigma)^d=\sigma_1^d\sigma_2^d$.
\end{enumerate}
\end{lemma}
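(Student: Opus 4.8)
The plan is to follow, \emph{mutatis mutandis}, the proofs of Lemma~\eqref{le:20180421} and Proposition~\eqref{pr:20180421}, systematically replacing the non-strict condition $\beta\geq\alpha$ by the strict one $\beta>\alpha$ and peeling off the value $\alpha=1$ as a trivial case, since $\sigma^d(1)=\sigma(1)$ by definition. Two elementary facts will be used throughout: first, for any family of subsets, $\langle\bigcup_i\langle A_i\rangle\rangle=\langle\bigcup_i A_i\rangle$; second, by Proposition~\eqref{pr:20181112} each $\sigma(\beta)$ is a subgroup containing $e$, so the unions appearing below are symmetric and contain each of their ``factors''. I shall also use that $\sigma^d$ is a decreasing gradual subgroup, and normal when $\sigma$ is, as already recorded after the definition of $\sigma^d$.

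For the order-theoretic items: (1) is immediate since for $\alpha<1$ one has $\bigcup\{\sigma_1(\beta)\mid\beta>\alpha\}\subseteq\bigcup\{\sigma_1(\beta)\mid\beta\geq\alpha\}$, whence $\sigma_1^d(\alpha)\subseteq\sigma_1^c(\alpha)$; (2) is the monotonicity of $\beta\mapsto\bigcup\sigma(\beta)$ and of $\langle-\rangle$. For (3) I would evaluate $\sigma_1^{dd}$ and $\sigma_1^{cd}=(\sigma_1^c)^d$ at a fixed $\alpha<1$ via the first fact together with the set-level identities $\{\gamma\mid\gamma>\beta\textrm{ for some }\beta>\alpha\}=\{\gamma\mid\gamma\geq\beta\textrm{ for some }\beta>\alpha\}=(\alpha,1]$ (exactly the computation done earlier for gradual subsets); the reverse inclusion $\sigma_1^{dd}(\alpha)\subseteq\sigma_1^d(\alpha)$ follows from $\sigma^d$ being decreasing, which gives $\bigcup\{\sigma_1^d(\beta)\mid\beta>\alpha\}\subseteq\sigma_1^d(\alpha)$. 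For (4) (where the statement should read $\sigma_1^d(\alpha)$ for the evident typo $\sigma_1^c(\alpha)$), if $\sigma_1$ is normal then $U_\alpha:=\bigcup\{\sigma_1(\beta)\mid\beta>\alpha\}$ is conjugation-stable, so $g(a_1\cdots a_t)g^{-1}=(ga_1g^{-1})\cdots(ga_tg^{-1})$ (as in the proof of Lemma~\eqref{le:20180421}(5)) shows $\sigma_1^d(\alpha)=\langle U_\alpha\rangle$ is normal; and since $U_\alpha$ is symmetric, $\langle U_\alpha\rangle$ is exactly the set of finite products of elements of $U_\alpha$.

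Finally (5) and (6) go as in Proposition~\eqref{pr:20180421}: from $\sigma_1(\beta),\sigma_2(\beta)\subseteq\sigma_1(\beta)\sigma_2(\beta)\subseteq\langle\sigma_1(\beta)\cup\sigma_2(\beta)\rangle$ one gets that, for every $\alpha$, $\bigcup\{\sigma_1(\beta)\sigma_2(\beta)\mid\beta>\alpha\}$ and $\bigcup\{\sigma_1(\beta)\cup\sigma_2(\beta)\mid\beta>\alpha\}$ generate the same subgroup, which by the first fact equals $\langle\sigma_1^d(\alpha)\cup\sigma_2^d(\alpha)\rangle=\langle\sigma_1^d(\alpha)\sigma_2^d(\alpha)\rangle$; hence $(\sigma_1\sigma_2)^d=\langle\sigma_1^d\sigma_2^d\rangle$. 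If in addition $\sigma_1$ (or $\sigma_2$) is normal, then $\sigma_1^d$ (resp.\ $\sigma_2^d$) is normal by (4), so $\sigma_1^d(\alpha)\sigma_2^d(\alpha)$ is already a subgroup, equal to its own generated subgroup, and $(\sigma_1\sigma_2)^d=\sigma_1^d\sigma_2^d$.

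I do not expect any genuinely hard step; the main care is purely bookkeeping: keeping straight the nested strict/non-strict index conditions $\beta>\alpha$ and $\gamma>\beta$ (and the mixed $\gamma\geq\beta$ in the $c$--$d$ composites of (3)), remembering to separate out $\alpha=1$, and — in (5)--(6) — being precise about when the product set $\sigma_1(\alpha)\sigma_2(\alpha)$ is merely a generating set versus an actual subgroup, the normality hypothesis being exactly what upgrades $\langle\sigma_1^d\sigma_2^d\rangle$ to $\sigma_1^d\sigma_2^d$.
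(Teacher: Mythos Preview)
Your proposal is correct and matches the paper's approach exactly: the paper gives no detailed argument for this lemma, stating only that its proof ``is similar to the proof of Lemma~\eqref{le:20180421}, and Proposition~\eqref{pr:20180421}'', which is precisely the template you follow. Your write-up in fact supplies more detail than the paper does (including the observation that the displayed $\sigma_1^c(\alpha)$ in (4) is a typo for $\sigma_1^d(\alpha)$), and the bookkeeping you flag --- the strict versus non-strict index sets and the special value $\alpha=1$ --- is indeed the only place requiring care.
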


A gradual subgroup $\sigma$ is an \textbf{strict decreasing gradual subgroup} whenever $\sigma=\sigma^{d}$,  and we have:

\begin{lemma}
For any gradual subgroup $\sigma$ we have that $\sigma^d$ is the biggest strict decreasing gradual subgroup contained in $\sigma^c$.
\end{lemma}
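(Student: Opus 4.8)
The plan is to mimic the proof of the analogous statement for gradual subsets, relying only on the formal properties of the operator $d$ collected in Lemma~\eqref{le:20180423b}. Concretely, I want to establish two things: that $\sigma^d$ is itself a strict decreasing gradual subgroup contained in $\sigma^c$, and that it contains every other strict decreasing gradual subgroup contained in $\sigma^c$; together these say exactly that $\sigma^d$ is the biggest such.

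First I would record that $\sigma^d$ belongs to the right class: by the remark following the definition of the strict accumulation, $\sigma^d$ is a decreasing gradual subgroup (and it is normal whenever $\sigma$ is), and by Lemma~\eqref{le:20180423b}(3) we have $\sigma^{dd}=\sigma^d$, so $\sigma^d$ is $d$--open, i.e.\ a strict decreasing gradual subgroup. Moreover Lemma~\eqref{le:20180423b}(1) gives $\sigma^d\subseteq\sigma^c$. Hence $\sigma^d$ is a strict decreasing gradual subgroup contained in $\sigma^c$.

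Next, let $\tau$ be an arbitrary strict decreasing gradual subgroup with $\tau\subseteq\sigma^c$. Since $\tau=\tau^d$ and the operator $d$ is order preserving (Lemma~\eqref{le:20180423b}(2)), applying $d$ to the inclusion $\tau\subseteq\sigma^c$ yields $\tau=\tau^d\subseteq(\sigma^c)^d=\sigma^{cd}=\sigma^d$, the last two equalities being Lemma~\eqref{le:20180423b}(3). Thus $\tau\subseteq\sigma^d$, which is precisely the maximality assertion.

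There is no genuine obstacle here: the statement is a purely formal consequence of $d$ behaving as an interior operator (deflationary, monotone, idempotent) together with the commutation $\sigma^{cd}=\sigma^d$. The only point that deserves a moment's care is verifying that $\sigma^d$ really lies among the strict decreasing gradual \emph{subgroups} — not merely the gradual subsets — but this is already guaranteed by Lemma~\eqref{le:20180423b}, so the argument above goes through verbatim in the group setting.
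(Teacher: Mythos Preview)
Your proof is correct and follows exactly the route the paper takes: in the gradual-subset section the analogous lemma is proved by the single line ``let $\tau$ be strict decreasing with $\tau\subseteq\sigma^c$, then $\tau=\tau^d\subseteq\sigma^{cd}=\sigma^d$'', and the subgroup version is stated without proof, the reader being expected to repeat that argument using Lemma~\ref{le:20180423b}. Your write-up is just a more explicit version of the same idea, additionally spelling out that $\sigma^d$ itself is strict decreasing and sits inside $\sigma^c$, which the paper leaves implicit.
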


These results mean that the map $\sigma\mapsto\sigma^d$ is an interior operator in the set $\mathcal{J}^d(X)$ of all decreasing gradual subgroups of $G$.

\subsection{{Gradual subgroups and} fuzzy subgroups}

We shall show that there exists a strong relationship between gradual subgroups of a group $G$ and fuzzy subgroups of $G$. Remember that if $G$ is a group, a \textbf{fuzzy subgroup} $\mu$ of $G$ is a nonconstant, equal to 0, map $\mu:G\longrightarrow[0,1]$ satisfying $\mu(xy^{-1})\geq\mu(x)\wedge\mu(y)$, for any $x,y\in{G}$. In particular, if $e$ is the neutral element of $G$, then $\mu(e)\geq\mu(x)$ and $\mu(x)=\mu(x^{-1})$ for any $x\in{G}$. Our aim is to identify fuzzy subgroups with some particular decreasing gradual subgroups.

First, we need to realize some modifications to have well defined gradual groups starting from a fuzzy group.

In the set of all fuzzy subgroups $\mu$ of $G$, we define a equivalence relation: $\mu_1\sim\mu_2$ if $\mu_1(x)=\mu_2(x)$ for any $x\neq{e}$. In order to choose a canonical element in each equivalence class, following an idea in \cite{Jahan:2012}, for any fuzzy subgroup $\mu$ we define $\mu^1$ as follows: $\mu^1(x)=\left\{\begin{array}{ll}
\mu(x)&\textrm{ if }x\neq{e},\\
1&\textrm{ if }x=e.
\end{array}\right.$
Observe that each equivalence class $[\mu]$ has a unique element of the shape $\mu^1$, whenever $\mu^1$ is a fuzzy subgroup.

\begin{lemma}
Let $\mu$ be a fuzzy subgroup of a group $G$, then $\mu^1$ is a fuzzy subgroup, and $\mu^1$ is normal whenever $\mu$ is.
\end{lemma}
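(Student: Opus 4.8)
The plan is to verify directly the defining inequality $\mu^1(xy^{-1})\ge\mu^1(x)\wedge\mu^1(y)$ for all $x,y\in G$, by a case split according to whether each of $x$, $y$, and $xy^{-1}$ equals the neutral element $e$. First I would record the two facts that drive the argument: $\mu^1$ coincides with $\mu$ off $e$, while $\mu^1(e)=1\ge\mu(z)$ for every $z\in G$ (using $\mu(e)\ge\mu(z)$, valid since $\mu$ is a fuzzy subgroup); and $\mu^1(e)=1$ already shows $\mu^1$ is not constantly $0$, so the only thing to check is the inequality.

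For the case analysis: if $x=y=e$ then $xy^{-1}=e$ and both sides equal $1$. If exactly one of $x,y$ is $e$, say $y=e$, then $xy^{-1}=x\ne e$ and the inequality reads $\mu(x)\ge\mu(x)\wedge 1$, which is clear; the case $x=e$ is symmetric, using $\mu(y^{-1})=\mu(y)$. If $x,y\ne e$ and $xy^{-1}\ne e$, then all three relevant values of $\mu^1$ agree with $\mu$, and the inequality is exactly the defining property of $\mu$. Finally, if $x,y\ne e$ but $xy^{-1}=e$ (that is, $x=y$), the left side is $\mu^1(e)=1$, which dominates any meet of values of $\mu^1$. This exhausts all cases, so $\mu^1$ is a fuzzy subgroup.

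For normality, recall that $\mu$ being normal means $\mu(xyx^{-1})=\mu(y)$ for all $x,y\in G$. Since conjugation by $x$ is a bijection of $G$ fixing $e$, we have $xyx^{-1}=e$ if and only if $y=e$; hence if $y=e$ both sides equal $\mu^1(e)=1$, and if $y\ne e$ then $xyx^{-1}\ne e$, so $\mu^1(xyx^{-1})=\mu(xyx^{-1})=\mu(y)=\mu^1(y)$. Therefore $\mu^1$ is normal whenever $\mu$ is.

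The only subtlety — hardly an obstacle — is the subcase $xy^{-1}=e$ with $x,y\ne e$: one must not attempt to reduce it to the inequality for $\mu$, but instead observe that the newly assigned value $\mu^1(e)=1$ is the top element of $[0,1]$ and so satisfies the inequality automatically. Every remaining case follows immediately from the corresponding property of $\mu$ together with $\mu(x)=\mu(x^{-1})$.
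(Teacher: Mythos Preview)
Your proof is correct and follows essentially the same case analysis as the paper's argument. In fact you are more thorough: the paper omits the symmetric subcase $x=e$, $y\neq e$ and does not write out the normality argument at all, whereas you handle both explicitly.
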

\begin{proof}
Let $x,y\in{G}$, then $\mu^1(xy^{-1})=\mu(xy^{-1})\geq\mu(x)\wedge\mu(y)=\mu^1(x)\wedge\mu^1(y)$ whenever $x,y,xy^{-1}\neq{e}$.
On the other hand, if $xy^{-1}=e$, then $\mu^1(xy^{-1})=1\geq\mu^1(x)\wedge\mu^1(y)$; if $x\neq{e}$, $y=e$, then $\mu^1(y)=1\geq\mu^1(x)$, and we have $\mu^1(xy^{-1})=\mu^1(x)=\mu^1(x)\wedge\mu^1(y)$.
\end{proof}

A decreasing gradual subgroup $\sigma$ satisfies \textbf{property (F)} if there exists $\Max\{\alpha\mid\;x\in\sigma(\alpha)\}$ for every $x\in\cup_\alpha\sigma(\alpha)$. An strict decreasing gradual subgroup $\sigma$ satisfies \textbf{property (inf--F)} if $\gamma=\Inf\{\beta\mid\;x\notin\sigma^d(\beta)\}$ satisfies $x\in\sigma(\gamma)$ for any $x\in{G}$.

Let $\sigma$ be a decreasing gradual subgroup, let as denote $\sigma^*(\alpha)=\sigma^c(\alpha)\setminus\sigma^d(\alpha)$, the difference set, for every $\alpha\in(0,1]$.

\begin{lemma}\label{le:20180423}
Let $\sigma$ be a decreasing gradual subgroup such that $\sigma^c\neq\sigma^d$, the following statements are equivalent:
\begin{enumerate}[(a)]\sepa
\item
$\sigma^c$ satisfies property (F).
\item
$\sigma^d$ satisfies property (inf-F).
\item
$\stackrel{\bullet}{\cup}_{\alpha\in(0,1]}\sigma^*(\alpha)=\cup\{\sigma(\alpha)\mid\;\alpha\in(0,1]\}$.
\end{enumerate}
\end{lemma}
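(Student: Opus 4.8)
The plan is to observe that this is the exact group-theoretic analogue of the corresponding equivalence for decreasing gradual subsets established in Section~\ref{se:20181115}, and that the group structure is almost inert here: once one checks that $\sigma^c$ and $\sigma^d$ reduce, under the decreasing hypothesis, to plain unions of the family $\{\sigma(\alpha)\}$, the three conditions become statements about a decreasing chain of subsets and can be proved essentially verbatim. So the first step is this reduction: since $\sigma$ is decreasing we have $\sigma^c=\sigma$, and for every $\alpha\in(0,1)$ the family $\{\sigma(\beta)\mid\;\beta>\alpha\}$ is an increasing chain of subgroups of $G$ (increasing as $\beta$ decreases towards $\alpha$), so its union is already a subgroup and therefore $\sigma^d(\alpha)=\cup\{\sigma(\beta)\mid\;\beta>\alpha\}$, with no need to pass to the generated subgroup. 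Consequently $\sigma^*(\alpha)=\sigma(\alpha)\setminus\sigma^d(\alpha)$, and I would record at once that the union $\stackrel{\bullet}{\cup}_{\alpha}\sigma^*(\alpha)$ is automatically disjoint, because for $\alpha<\alpha'$ one has $\sigma^*(\alpha')\subseteq\sigma(\alpha')\subseteq\sigma^d(\alpha)$.

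For (a) $\Rightarrow$ (b): given $x\in\cup_\alpha\sigma(\alpha)$ (the case $x\notin\cup_\alpha\sigma(\alpha)$ being vacuous for the statement), I would set $\beta=\Max\{\alpha\mid\;x\in\sigma(\alpha)\}$, which exists by (a), and $\gamma=\Inf\{\beta'\mid\;x\notin\sigma^d(\beta')\}$. Since $\sigma$ is decreasing, $x\notin\sigma(\delta)$ for every $\delta>\beta$, hence $x\notin\sigma^d(\beta)$ and $\gamma\leq\beta$; conversely, for $\alpha<\beta$ we have $x\in\sigma(\beta)\subseteq\sigma^d(\alpha)$, so $\gamma\geq\beta$. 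Thus $\gamma=\beta$ and $x\in\sigma(\beta)=\sigma(\gamma)$, which is (inf-F). For (b) $\Rightarrow$ (a) I would run the symmetric argument: put $\beta=\Sup\{\alpha\mid\;x\in\sigma(\alpha)\}$ and $\gamma=\Inf\{\beta'\mid\;x\notin\sigma^d(\beta')\}$, note $\beta\leq\gamma$ because $\sigma$ is decreasing, and rule out $\beta<\gamma$ since an intermediate $\omega$ would give $x\in\sigma(\omega)$, contradicting the definition of $\beta$; then (inf-F) applied to $x$ yields $x\in\sigma(\gamma)=\sigma(\beta)$, so the supremum is attained and property (F) holds.

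For (a) $\Leftrightarrow$ (c): the inclusion $\stackrel{\bullet}{\cup}_\alpha\sigma^*(\alpha)\subseteq\cup_\alpha\sigma(\alpha)$ is immediate from $\sigma^*(\alpha)\subseteq\sigma(\alpha)$. For the reverse under (a), given $x\in\sigma(\alpha_0)$ I would take $\beta=\Max\{\alpha\mid\;x\in\sigma(\alpha)\}$; then $x\notin\sigma(\gamma)$ for $\gamma>\beta$ forces $x\notin\sigma^d(\beta)$, so $x\in\sigma(\beta)\setminus\sigma^d(\beta)=\sigma^*(\beta)$. Conversely, assuming (c), any $x\in\cup_\alpha\sigma(\alpha)$ lies in some $\sigma^*(\beta)=\sigma(\beta)\setminus\sigma^d(\beta)$; then $x\in\sigma(\beta)$ while $x\notin\sigma(\gamma)$ for every $\gamma>\beta$, so $\Max\{\alpha\mid\;x\in\sigma(\alpha)\}=\beta$ exists, which is property (F) for $\sigma^c=\sigma$.

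The main obstacle, and really the only delicate point, is the bookkeeping at the boundary: that the quantifier in properties (F) and (inf-F) should be read over $x\in\cup_\alpha\sigma(\alpha)$ rather than over all of $G$ (the neutral element $e$ is harmless, since $e\in\sigma(\alpha)$ for every $\alpha$ and the relevant maximum is simply $1$), and that the reduction $\sigma^d(\alpha)=\cup\{\sigma(\beta)\mid\;\beta>\alpha\}$ genuinely holds in the group setting. After that, everything is the chain argument already carried out for decreasing gradual subsets, and nothing new about subgroups is needed.
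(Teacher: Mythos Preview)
Your proposal is correct and matches the paper's approach: the paper states Lemma~\ref{le:20180423} without proof, relying implicitly on the proof of the identical set-theoretic lemma in Section~\ref{se:20181115}, and your argument reproduces that proof essentially verbatim after the reduction step. Your explicit observation that, under the decreasing hypothesis, $\sigma^d(\alpha)=\cup\{\sigma(\beta)\mid\beta>\alpha\}$ is already a subgroup (so no passage to the generated subgroup is needed) is exactly the point that makes the group case collapse to the set case, and it is worth stating, since the paper leaves this tacit.
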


Let $[\mu]$ be the equivalence class of the fuzzy subgroup $\mu$; we define a gradual subset $\sigma(\mu)$ of $G$ by:
\[
\sigma(\mu)(\alpha)=\{x\in{G}\mid\;\mu^1(x)\geq\alpha\}.
\]

\begin{lemma}
The map $\nu:[\mu]\mapsto\sigma(\mu)$ is well defined, and $\sigma(\mu)$ is a decreasing gradual subgroup satisfying property (F).
\end{lemma}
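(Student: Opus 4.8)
The plan is to verify three things in turn: that $\nu$ is well defined on equivalence classes, that $\sigma(\mu)$ is a decreasing gradual subgroup, and that it satisfies property (F). The first point is essentially immediate: by construction $\sigma(\mu)(\alpha)=\{x\in G\mid \mu^1(x)\geq\alpha\}$ depends only on $\mu^1$, and the earlier lemma on $\mu^1$ shows that $\mu^1$ is the unique canonical representative of the class $[\mu]$; hence if $\mu_1\sim\mu_2$ then $\mu_1^1=\mu_2^1$ and $\sigma(\mu_1)=\sigma(\mu_2)$. So $\nu$ descends to the set of equivalence classes.

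Next I would check that each $\sigma(\mu)(\alpha)$ is a subgroup of $G$. Since $\mu^1$ is a fuzzy subgroup (by the preceding lemma), it satisfies $\mu^1(xy^{-1})\geq\mu^1(x)\wedge\mu^1(y)$ and $\mu^1(e)=1\geq\mu^1(x)$ for all $x$. Thus $e\in\sigma(\mu)(\alpha)$ for every $\alpha\in(0,1]$, so the sets are nonempty; and if $x,y\in\sigma(\mu)(\alpha)$, i.e. $\mu^1(x),\mu^1(y)\geq\alpha$, then $\mu^1(xy^{-1})\geq\mu^1(x)\wedge\mu^1(y)\geq\alpha$, so $xy^{-1}\in\sigma(\mu)(\alpha)$. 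Hence $\sigma(\mu)(\alpha)$ is a subgroup, so by Proposition~\eqref{pr:20181112} the gradual subset $\sigma(\mu)$ is a gradual subgroup. The decreasing property is equally direct: if $\alpha\leq\beta$ and $x\in\sigma(\mu)(\beta)$, then $\mu^1(x)\geq\beta\geq\alpha$, so $x\in\sigma(\mu)(\alpha)$; thus $\sigma(\mu)(\beta)\subseteq\sigma(\mu)(\alpha)$, which is exactly $\sigma(\mu)=\sigma(\mu)^c$.

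Finally, for property (F) I must show that for each $x\in\cup_\alpha\sigma(\mu)(\alpha)$ the set $\{\alpha\mid x\in\sigma(\mu)(\alpha)\}$ has a maximum. By definition $x\in\sigma(\mu)(\alpha)$ iff $\mu^1(x)\geq\alpha$, so $\{\alpha\mid x\in\sigma(\mu)(\alpha)\}=(0,\mu^1(x)]$, provided $\mu^1(x)>0$; and $x$ lies in $\cup_\alpha\sigma(\mu)(\alpha)$ precisely when $\mu^1(x)>0$. This interval has maximum $\mu^1(x)$, so property (F) holds with $\Max\{\alpha\mid x\in\sigma(\mu)(\alpha)\}=\mu^1(x)$. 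The only mild subtlety — and the closest thing to an obstacle — is the bookkeeping around the neutral element: one must use the redefinition $\mu^1(e)=1$ to be sure $e$ sits at level $1$ uniformly (so that $e$ never spoils the subgroup or decreasing conditions), and one must be slightly careful that "$x\in\cup_\alpha\sigma(\mu)(\alpha)$" is the right domain condition for (F), i.e. that we only assert the maximum exists for elements of positive membership. Both are handled by the explicit description $\{\alpha\mid x\in\sigma(\mu)(\alpha)\}=(0,\mu^1(x)]$ above, and everything else is routine.
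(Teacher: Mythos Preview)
Your proof is correct and follows the same approach as the paper. The paper's own proof is a single sentence (``Observe that for any class $[\mu]$ there exists only one fuzzy subgroup $\mu$ such that $\mu=\mu^1$''), addressing only well-definedness and leaving the subgroup, decreasing, and property~(F) verifications implicit; you have simply written out those routine checks explicitly, and they are all handled correctly.
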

\begin{proof}
Observe that for any class $[\mu]$ there exists only one fuzzy subgroup $\mu$ such that $\mu=\mu^1$.
\end{proof}

Let $\sigma$ be a decreasing gradual subgroup satisfying property (F); we define a fuzzy subset $\mu(\sigma)$ by,
\[
\mu(\sigma)(x)=\left\{\begin{array}{ll}
\Max\{\gamma\mid\;x\in\sigma(\gamma)\},&\textrm{ if }x\neq{e},\\
1,&\textrm{ if }x=e.
\end{array}\right.
\]

\begin{lemma}
With the above notation $\mu(\sigma)$ is a fuzzy subgroup, and we have a map $\upsilon:\sigma\mapsto[\mu(\sigma)]$ from the set of all decreasing gradual subsets satisfying property (F) to the set of all classes of fuzzy subgroups.
\end{lemma}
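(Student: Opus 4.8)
The plan is to verify directly that $\mu(\sigma)$ satisfies the defining inequality of a fuzzy subgroup, after which the assignment $\sigma\mapsto[\mu(\sigma)]$ is automatically a well-defined map into classes. First I would check that $\mu(\sigma)$ really is a map $G\longrightarrow[0,1]$: for $x=e$ the value is $1$; for $x\neq e$ with $x\in\cup_\gamma\sigma(\gamma)$ property (F) guarantees that $\Max\{\gamma\mid\;x\in\sigma(\gamma)\}$ exists and lies in $(0,1]$; and for $x\neq e$ with $x\notin\cup_\gamma\sigma(\gamma)$ the set $\{\gamma\mid\;x\in\sigma(\gamma)\}$ is empty and we read the value as $\Max\varnothing=0$. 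In particular $\mu(\sigma)(e)=1$, so $\mu(\sigma)$ is not the constant map $0$.

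Next I would record an invariance under inverses: since each $\sigma(\gamma)$ is a subgroup of $G$ by Proposition~\eqref{pr:20181112}, we have $x\in\sigma(\gamma)$ if and only if $x^{-1}\in\sigma(\gamma)$, and also $x\neq e$ if and only if $x^{-1}\neq e$; hence $\mu(\sigma)(x)=\mu(\sigma)(x^{-1})$ for every $x\in G$. The core step is then the inequality $\mu(\sigma)(xy^{-1})\geq\mu(\sigma)(x)\wedge\mu(\sigma)(y)$, which I would split into cases. If $xy^{-1}=e$ the left-hand side is $1$ and there is nothing to prove; if $y=e$ it is an equality; if $x=e$ it follows from the inverse invariance. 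In the remaining case ($x,y,xy^{-1}$ all distinct from $e$), put $\alpha=\mu(\sigma)(x)$, $\beta=\mu(\sigma)(y)$ and $\delta=\alpha\wedge\beta$; if $\delta=0$ the inequality is trivial, and otherwise the choice of the two maxima gives $x\in\sigma(\alpha)$ and $y\in\sigma(\beta)$, so because $\sigma$ is decreasing and $\delta\leq\alpha,\beta$ we get $x,y\in\sigma(\delta)$. As $\sigma(\delta)$ is a subgroup, $xy^{-1}\in\sigma(\delta)$, whence $\mu(\sigma)(xy^{-1})\geq\delta=\mu(\sigma)(x)\wedge\mu(\sigma)(y)$.

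This shows $\mu(\sigma)$ is a fuzzy subgroup; moreover, since $\mu(\sigma)(e)=1$, it already equals $\mu(\sigma)^1$ and is the canonical representative of its class, so $[\mu(\sigma)]$ is unambiguously defined and $\upsilon:\sigma\mapsto[\mu(\sigma)]$ is a well-defined map on the decreasing gradual subgroups satisfying property (F). I expect the only delicate point to be the bookkeeping around the degenerate values --- the convention $\Max\varnothing=0$ and the reductions when one of $x,y,xy^{-1}$ equals $e$ --- since the generic case is an immediate consequence of each $\sigma(\gamma)$ being a subgroup together with the monotonicity of $\sigma$.
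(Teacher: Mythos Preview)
The paper states this lemma without proof, so there is nothing to compare your argument against; your direct verification is correct and is exactly the natural route. The only points worth a remark are cosmetic: the paper's statement says ``decreasing gradual subsets'' where ``decreasing gradual subgroups'' is clearly meant (you silently fix this), and your convention $\Max\varnothing=0$ for elements outside $\cup_\gamma\sigma(\gamma)$ is the obvious way to read the paper's definition of $\mu(\sigma)$ --- both are harmless and your case analysis handles them cleanly.
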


Now we have the announced relationship of gradual subgroups and fuzzy subgroups.

\begin{theorem}\label{th:201804b}
Let $G$ be a group, the maps $\nu:[\mu]\mapsto\sigma(\mu)$ and $\upsilon:\sigma\mapsto[\mu(\sigma)]$ defines a bijective correspondence between:
\begin{enumerate}[(1)]\sepa
\item
equivalence classes of fuzzy subgroups $[\mu]$ of $G$ and
\item
descending gradual subgroups $\sigma$ of $G$ satisfying property (F).
\end{enumerate}
\end{theorem}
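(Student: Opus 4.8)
The plan is to prove that the two maps are mutually inverse. By the preceding lemmas, $\nu$ carries an equivalence class $[\mu]$ of fuzzy subgroups to a decreasing gradual subgroup $\sigma(\mu)$ satisfying property (F), and $\upsilon$ carries such a gradual subgroup to an equivalence class of fuzzy subgroups; hence it only remains to verify $\upsilon\nu=\id$ on equivalence classes of fuzzy subgroups and $\nu\upsilon=\id$ on decreasing gradual subgroups with property (F).

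For $\upsilon\nu=\id$ I would fix a fuzzy subgroup $\mu$ and compute $\mu(\sigma(\mu))$ directly. For $x\neq e$, the chain of equivalences $x\in\sigma(\mu)(\gamma)\iff\mu^1(x)\geq\gamma\iff\mu(x)\geq\gamma$ gives
\[
\mu(\sigma(\mu))(x)=\Max\{\gamma\mid\mu(x)\geq\gamma\}=\mu(x),
\]
while $\mu(\sigma(\mu))(e)=1$ by the definition of $\mu(\sigma)$ at $e$. Thus $\mu(\sigma(\mu))=\mu^1$, and since $\mu^1$ and $\mu$ agree off $e$ we conclude $[\mu(\sigma(\mu))]=[\mu^1]=[\mu]$.

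For $\nu\upsilon=\id$ I would fix a decreasing gradual subgroup $\sigma=\sigma^c$ satisfying property (F) and an $\alpha\in(0,1]$. Since $\mu(\sigma)$ has value $1$ at $e$ it coincides with its normalization $\mu(\sigma)^1$, so $\sigma(\mu(\sigma))(\alpha)=\{x\mid\mu(\sigma)(x)\geq\alpha\}$; the neutral element lies in this set and in $\sigma(\alpha)$, the latter by Proposition~\eqref{pr:20181112}. For $x\neq e$ the crux is the equivalence
\[
\Max\{\gamma\mid x\in\sigma(\gamma)\}\geq\alpha\quad\Longleftrightarrow\quad x\in\sigma(\alpha),
\]
whose $\Leftarrow$ direction is immediate, and whose $\Rightarrow$ direction follows because, if the maximum is attained at some $\beta\geq\alpha$, then $x\in\sigma(\beta)\subseteq\sigma(\alpha)$ since $\sigma$ is decreasing. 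Hence $\sigma(\mu(\sigma))(\alpha)=\sigma(\alpha)$ for every $\alpha$, i.e.\ $\nu\upsilon=\id$, and the two maps give the asserted bijection.

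The main obstacle is the bookkeeping around the neutral element. The equivalence relation on fuzzy subgroups is designed to forget the value at $e$, the map $\nu$ is built through the normalized representative $\mu^1$, and on the gradual side $e$ automatically belongs to every $\sigma(\alpha)$ by Proposition~\eqref{pr:20181112}; one must keep these conventions aligned at $e$ while checking that the constructions invert one another away from $e$. The only genuinely structural inputs are that $\sigma$ be decreasing (used in the $\Rightarrow$ direction above) and property (F), which guarantees that the maxima defining $\mu(\sigma)$ actually exist; everything else is a routine unfolding of the definitions.
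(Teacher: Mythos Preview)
Your proposal is correct and follows essentially the same route as the paper's proof: verify $\upsilon\nu=\id$ and $\nu\upsilon=\id$ by direct computation, using the decreasing property of $\sigma$ for the nontrivial inclusion in the second identity. The only cosmetic difference is that the paper fixes the representative $\mu=\mu^1$ at the outset and suppresses the case split at $e$, whereas you keep the equivalence-class language explicit and track the neutral element separately; both amount to the same argument.
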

\begin{proof}
Let $\mu=\mu^1$ be a fuzzy group, for any $x\in{G}$ we have:
\[
\mu(\sigma(\mu))(x)
=\Max\{\alpha\mid\;x\in\sigma(\mu)(\alpha)\}
=\Max\{\alpha\mid\;\alpha\leq\mu(x)\}
=\mu(x).
\]
On the other hand, let $\sigma$ be a decreasing gradual subgroup satisfying property (F), for any $\alpha\in(0,1]$ we have:
\[
\sigma(\mu(\sigma))(\alpha)
=\{x\mid\;\mu(\sigma)(x)\geq\alpha\}
=\{x\mid\;\Max\{\beta\mid\;x\in\sigma(\beta)\}\geq\alpha\}
=\sigma(\alpha).
\]
\end{proof}

\begin{lemma}
Let $\mu_1,\mu_2$ fuzzy subgroups, let us define $[\mu_1]\,[\mu_2]$ as $[\mu_1\,\mu_2]$.
\end{lemma}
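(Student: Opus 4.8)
The plan is to read this statement as a definitional lemma: the product symbol $[\mu_1]\,[\mu_2]$ has not yet been given a meaning, and the only thing to prove is that the prescription $[\mu_1]\,[\mu_2]:=[\mu_1\mu_2]$ is sound, i.e. that $\mu_1\mu_2$ is again a fuzzy subgroup and that its class depends only on the classes $[\mu_1]$ and $[\mu_2]$. So first I would pin down what $\mu_1\mu_2$ means. By Theorem~\eqref{th:201804b} each class $[\mu_i]$ is identified with the descending gradual subgroup $\sigma(\mu_i)$, and $\sigma(\mu_i)$ is manufactured entirely out of the canonical representative $\mu_i^1$ (the unique member of $[\mu_i]$ with $\mu_i^1(e)=1$) via $\sigma(\mu_i)(\alpha)=\{x\in G\mid\mu_i^1(x)\geq\alpha\}$. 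I would then define $\mu_1\mu_2$ to be the fuzzy subgroup obtained, through the map $\upsilon$, from the accumulation of the componentwise product $(\sigma(\mu_1)\sigma(\mu_2))(\alpha)=\sigma(\mu_1)(\alpha)\,\sigma(\mu_2)(\alpha)$; concretely this forces $(\mu_1\mu_2)(e)=1$ and, for $z\neq e$,
\[
(\mu_1\mu_2)(z)=\sup\{\mu_1^1(x)\wedge\mu_2^1(y)\mid z=xy\},
\]
the ordinary convolution product of the two canonical representatives.

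Next I would run the three required checks. (i) $\sigma(\mu_1)\sigma(\mu_2)$ is a gradual subgroup: this uses the lemma on products of gradual subgroups one of which is normal, so one assumes (as in the classical case) that one of $\mu_1,\mu_2$ is normal, or else replaces the componentwise product by the gradual subgroup it generates, which is always a gradual subgroup. (ii) Its accumulation is a descending gradual subgroup: since $\sigma(\mu_1)$ and $\sigma(\mu_2)$ are decreasing, the componentwise product is decreasing, hence equals its own accumulation; in the normal case $(\sigma(\mu_1)\sigma(\mu_2))^c(\alpha)=\sigma(\mu_1)(\alpha)\sigma(\mu_2)(\alpha)$. (iii) This descending gradual subgroup satisfies property (F): for $z=e$ the maximum $\alpha=1$ is attained via $e=e\cdot e$; for $z\neq e$ one must show the supremum above is actually a maximum, which I would obtain from Lemma~\eqref{le:20180423}, by verifying that the jumps $\sigma^c(\alpha)\setminus\sigma^d(\alpha)$ of the product exhaust its support. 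Granting (i)--(iii), $\mu_1\mu_2$ is a bona fide fuzzy subgroup and $\nu$ applies to $[\mu_1\mu_2]$.

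Well-definedness is then immediate and is the easy half: the inputs $\sigma(\mu_i)=\sigma(\mu_i^1)$ depend only on $[\mu_i]$, and every subsequent operation — componentwise product, accumulation, and the map $\upsilon$ of Theorem~\eqref{th:201804b} — is a genuine map, so $[\mu_1\mu_2]$ depends only on $[\mu_1]$ and $[\mu_2]$. (Note that this is why one cannot simply convolve arbitrary representatives: convolution moves the value at $e$, which would spoil invariance; passing to $\mu^1$ repairs exactly this.)

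The main obstacle is step (iii): property (F) for the product is genuinely delicate when $G$ is infinite, since the supremum over all factorizations $z=xy$ need not be attained a priori. If it can fail for the plain componentwise product, the honest remedy is to replace $(\sigma(\mu_1)\sigma(\mu_2))^c$ by the associated gradual subgroup satisfying (F) provided by the (F)$\leftrightarrow$(inf--F) correspondence of Lemma~\eqref{le:20180423} (equivalently, pass to the strict accumulation $(\sigma(\mu_1)\sigma(\mu_2))^d$ and read off the fuzzy subgroup via $\widetilde{\mu}$ using property (inf--F)); neither the definition nor the well-definedness argument is affected by this adjustment.
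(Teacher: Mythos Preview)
Your route is much heavier than the paper's. The paper takes $\mu_1\mu_2$ to be the ordinary convolution $(\mu_1\mu_2)(x)=\Sup\{\mu_1(y)\wedge\mu_2(z)\mid yz=x\}$ of the \emph{chosen} representatives and argues in one line that if $[\mu]=[\mu']$ then $(\mu\mu_2)(x)=(\mu'\mu_2)(x)$ for every $x$; there is no passage through $\sigma(\mu_i)$, no accumulation, no $\upsilon$, and no property~(F). Your checks (i)--(iii) address whether $\mu_1\mu_2$ is again a fuzzy subgroup lying in the image of $\nu$, which is not what the lemma asks; the only content here is well-definedness of the \emph{class} $[\mu_1\mu_2]$, so the whole (F)/(inf--F) apparatus and the normality discussion can be dropped.

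That said, your instinct to anchor the product to the canonical representatives $\mu_i^1$ is exactly the right one, and your parenthetical remark identifies the point the paper's one-liner glosses over: in the convolution the factorisation $y=e$, $z=x$ contributes $\mu(e)\wedge\mu_2(x)$, which genuinely depends on the representative. Concretely, with $G=\mathbb{Z}/2\mathbb{Z}$, $\mu(0)=\tfrac12$, $\mu(1)=\tfrac{3}{10}$, $\mu'(0)=1$, $\mu'(1)=\tfrac{3}{10}$, $\mu_2(0)=1$, $\mu_2(1)=\tfrac{8}{10}$, one finds $(\mu\mu_2)(1)=\tfrac12\neq\tfrac{8}{10}=(\mu'\mu_2)(1)$, so the paper's displayed middle equality fails as written. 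Your device of computing with $\mu_1^1,\mu_2^1$ (for which the value at $e$ is fixed to $1$) is precisely what makes the definition sound, and once you adopt it well-definedness is immediate --- no further machinery is needed.
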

\begin{proof}
The product $[\mu_1]\,[\mu_2]$ is well defined. Let $[\mu]=[\mu']$, for any $\mu_2$ se have:
\[
\mu\,\mu_2(x)
=\Sup\{\mu(y)\wedge\mu_2(z)\mid\;yz=x\}
=\Sup\{\mu'(y)\wedge\mu_2(z)\mid\;yz=x\}
=\mu'\,\mu_2(x).
\]
\end{proof}

\begin{remark}
Unfortunately, in Theorem~\eqref{th:201804b} the  map $\nu$ is not a homomorphism with respect to the product of classes of fuzzy subgroups. Indeed, for any $\mu_1,\mu_2$ we have:
\[
\begin{array}{ll}
(\sigma(\mu_1)\,\sigma(\mu_2))(\alpha)
&=\sigma(\mu_1)(x)\,\sigma(\mu_2)(\alpha)
=\{x\mid\;\mu_1(x)\geq\alpha\}\,\{x\mid\;\mu_2(x)\geq\alpha\}\\
&\subseteq\{x\mid\;\Sup\{\mu_1(y)\wedge\mu_2(z)\mid\;y\,z=x\}\geq\alpha\}
=\{x\mid\;(\mu_1\,\mu_2)(x)\geq\alpha\}\\
&=\sigma(\mu_1\,\mu_2)(x).
\end{array}
\]
\end{remark}

This inclusion could be strict as the following example shows.

\begin{example}
We define fuzzy subgroups $\mu_1$ and $\mu_2$ of $\mathbb{Z}$ as follows:
\[
\mu_1(x)=\left\{\begin{array}{ll}
0,&\textrm{ if }x\in\mathbb{Z}\setminus2\mathbb{Z},\\
1-\frac{2^t}{3^t},&\textrm{ if }x\in2^t\mathbb{Z}\setminus2^{t+1}\mathbb{Z},\\
1,&\textrm{ if }x=0.
\end{array}\right.
\quad
\mu_2(x)=\left\{\begin{array}{ll}
0,&\textrm{ if }x\in\mathbb{Z}\setminus3\mathbb{Z},\\
\frac{1}{2}-\frac{1}{3^t},&\textrm{ if }x\in3^t\mathbb{Z}\setminus3^{t+1}\mathbb{Z},\\
1,&\textrm{ if }x=0.
\end{array}\right.
\]
We claim $(\mu_1+\mu_2)(2)=\Sup\{\mu_1(y)\wedge\mu_2(2-y)\mid\;y\in\mathbb{Z}\}\leq\frac{1}{2}$. Indeed, we have two possibilities:
\begin{enumerate}[(1)]\sepa
\item
$\mu_1(y)>\frac{1}{2}$, then $y\in4\mathbb{Z}$, i.e., there exists $k\in\mathbb{Z}$ such that $y=4k$. Hence, $\mu_2(2-y)=\mu_2(2-4k)=\mu_2(2(1-2k))<\frac{1}{2}$ as $2-y\neq0$.
\item
$\mu_1(y)<\frac{1}{2}$.
\end{enumerate}
In both cases we have $\mu_1(y)\wedge\mu_2(2-y)<\frac{1}{2}$. In addition, we can choose $y$ such that $\mu_1(y)\wedge\mu_2(2-y)$ is as closed to $\frac{1}{2}$ as we desire. For any $2\leq{t,s}\in\mathbb{N}$ there exist $k,h\in\mathbb{Z}$ such that $2^{t-1}k-3^sh=1$, hence $2-2^tk=2(1-2^{t-1}k)=3^sh$; now, if we take $y=2^tk$, then $\mu_1(y)\geq1-\frac{1}{3^t}$ and $\mu_2(2-y)\geq\frac{1}{2}-\frac{1}{3^t}$. In consequence, $\frac{1}{2}>\mu_1(y)\wedge\mu_2(2-y)\geq\frac{1}{2}-\frac{1}{3^t}$, which implies that $(\mu_1+\mu_2)(2)=\frac{1}{2}$, and $2\in(\mu_1+\mu_2)_{\frac{1}{2}}$. On the other hand, we have $(\mu_1)_{\frac{1}{2}}+(\mu_2)_{\frac{1}{2}}=4\mathbb{Z}$, and $2\notin(\mu_1)_{\frac{1}{2}}+(\mu_2)_{\frac{1}{2}}$.
\end{example}

We shall change the assignation defined by $\nu$ to consider $\widetilde{\nu}:[\mu]\mapsto\widetilde{\sigma}(\mu)$, in which $\widetilde{\sigma}(\mu)$ is a strict decreasing gradual subgroup satisfying property (inf-F), and is defined by:
\[
\widetilde{\sigma}(\mu)(\alpha)=\left\{\begin{array}{ll}
\{x\in{G}\mid\;\mu^1(x)>\alpha\},&\textrm{ if }\alpha\neq1,\\
\{x\in{G}\mid\;\mu^1(x)=1\},&\textrm{ if }\alpha=1,
\end{array}\right.
\]
whose inverse is $\widetilde{\upsilon}:\sigma\mapsto[\widetilde{\mu}(\sigma)]$, defined
\[
\widetilde{\mu}(\sigma)(x)=\left\{\begin{array}{ll}
\Inf\{\gamma\mid\;x\notin\sigma(\gamma)\},&\textrm{ if }x\neq{e},\\
1,&\textrm{ if }x=e.
\end{array}\right.
\]
Thus we have the following theorem

\begin{theorem}\label{th:201804}
With the above notation we have:
\begin{enumerate}[(1)]
\item
$\widetilde{\sigma}(\mu)$ is a strict decreasing gradual subgroup satisfying property (inf-F), and $\widetilde{\nu}$ is well defined.
\item
$\widetilde{\mu}(\sigma)$ is a fuzzy subgroup.
\item
the maps $\widetilde{\nu}$ and $\widetilde{\upsilon}$ define a bijective correspondence between equivalence classes of fuzzy subgroups $[\mu]$ of $G$ and strict descending gradual subgroups $\sigma$ of $G$ satisfying property (inf-F).
\item
$\widetilde{\nu}$ is a homomorphism with respect to the product of classes of fuzzy subgroups.
\end{enumerate}
\end{theorem}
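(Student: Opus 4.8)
The plan is to prove the four items in sequence, reusing as much of the set-theoretic machinery from Section~2 as possible since the group-theoretic content is largely ``componentwise''.

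\textbf{Step (1): $\widetilde{\sigma}(\mu)$ is a strict decreasing gradual subgroup satisfying property (inf-F), and $\widetilde{\nu}$ is well defined.} First I would observe that each $\widetilde{\sigma}(\mu)(\alpha)$ is a subgroup of $G$: for $\alpha<1$ it is the strong $\alpha$-level $\{x\mid\mu^1(x)>\alpha\}$, and since $\mu^1$ is a fuzzy subgroup (by the lemma preceding Theorem~\eqref{th:201804b}), $\mu^1(xy^{-1})\geq\mu^1(x)\wedge\mu^1(y)>\alpha$ whenever $\mu^1(x),\mu^1(y)>\alpha$; for $\alpha=1$ it is $\{x\mid\mu^1(x)=1\}$, again a subgroup. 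Decreasingness is immediate from the definition of a strong cut. To see it is \emph{strict} decreasing, i.e. $\widetilde{\sigma}(\mu)=\widetilde{\sigma}(\mu)^d$, I would compute $\widetilde{\sigma}(\mu)^d(\alpha)=\langle\cup\{\widetilde{\sigma}(\mu)(\beta)\mid\beta>\alpha\}\rangle$ and note that each $\widetilde{\sigma}(\mu)(\beta)$ for $\beta>\alpha$ is already contained in $\widetilde{\sigma}(\mu)(\alpha)$, while conversely if $\mu^1(x)>\alpha$ then $\mu^1(x)>\beta>\alpha$ for some $\beta$, so $x\in\widetilde{\sigma}(\mu)(\beta)$; hence the union is already a subgroup equal to $\widetilde{\sigma}(\mu)(\alpha)$. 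Property (inf-F) follows because $\Inf\{\gamma\mid x\notin\widetilde{\sigma}(\mu)^d(\gamma)\}=\mu^1(x)$ and $x\in\widetilde{\sigma}(\mu)(\mu^1(x))$ (since $\mu^1(x)\geq\mu^1(x)$ uses the $\alpha=1$ clause when $\mu^1(x)=1$, and $\mu^1(x)>\gamma$ for all smaller $\gamma$ otherwise). Well-definedness of $\widetilde{\nu}$ on classes is clear since $\widetilde{\sigma}(\mu)$ depends only on $\mu^1$, the unique representative of $[\mu]$ with value $1$ at $e$. Normality transfers componentwise by the lemma on $\mu^1$.

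\textbf{Steps (2) and (3): $\widetilde{\mu}(\sigma)$ is a fuzzy subgroup, and $\widetilde{\nu},\widetilde{\upsilon}$ are mutually inverse.} For (2) I would take $\sigma$ a strict decreasing gradual subgroup with property (inf-F) and check $\widetilde{\mu}(\sigma)(xy^{-1})\geq\widetilde{\mu}(\sigma)(x)\wedge\widetilde{\mu}(\sigma)(y)$. Writing $\gamma_x=\widetilde{\mu}(\sigma)(x)=\Inf\{\gamma\mid x\notin\sigma(\gamma)\}$, property (inf-F) gives $x\in\sigma(\gamma_x)$ and $y\in\sigma(\gamma_y)$; if say $\gamma_x\leq\gamma_y$, then $\sigma(\gamma_y)\subseteq\sigma(\gamma_x)$ (decreasing), so $xy^{-1}\in\sigma(\gamma_x)$, whence $\widetilde{\mu}(\sigma)(xy^{-1})\geq\gamma_x=\gamma_x\wedge\gamma_y$. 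The $e$-clause is handled separately and gives $\widetilde{\mu}(\sigma)(e)=1$. For (3), the two composition identities $\widetilde{\upsilon}\widetilde{\nu}=\id$ and $\widetilde{\nu}\widetilde{\upsilon}=\id$ are the exact group analogues of the set-level computations in Theorem~\eqref{th:20180417}: one computes $\widetilde{\mu}(\widetilde{\sigma}(\mu))(x)=\Inf\{\alpha\mid x\notin\widetilde{\sigma}(\mu)(\alpha)\}=\Inf\{\alpha\mid\mu^1(x)\leq\alpha\}=\mu^1(x)$, and $\widetilde{\sigma}(\widetilde{\mu}(\sigma))(\alpha)=\{x\mid\Inf\{\beta\mid x\notin\sigma(\beta)\}>\alpha\}=\sigma(\alpha)$ using (inf-F) and strictness exactly as in the proof of Theorem~\eqref{th:20180417}. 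I would simply invoke that theorem applied to the underlying gradual subsets, noting that all the intermediate objects are subgroups so no extra verification is needed.

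\textbf{Step (4): $\widetilde{\nu}$ is a homomorphism for products of classes.} This is the substantive new point and the main obstacle. The product $[\mu_1][\mu_2]=[\mu_1\mu_2]$ is well defined by the preceding lemma. I need $\widetilde{\sigma}(\mu_1\mu_2)=\widetilde{\sigma}(\mu_1)\,\widetilde{\sigma}(\mu_2)$, i.e. for each $\alpha<1$,
\[
\{x\mid(\mu_1\mu_2)(x)>\alpha\}=\{x\mid\mu_1(x)>\alpha\}\cdot\{x\mid\mu_2(x)>\alpha\}.
\]
The inclusion $\supseteq$ is routine: if $\mu_1(y)>\alpha$ and $\mu_2(z)>\alpha$ then $(\mu_1\mu_2)(yz)=\Sup\{\mu_1(y')\wedge\mu_2(z')\mid y'z'=yz\}\geq\mu_1(y)\wedge\mu_2(z)>\alpha$. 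The crucial direction is $\subseteq$, which is \emph{false} for ordinary $\alpha$-levels (that is exactly what the worked example with $\mathbb{Z}$ shows) but \emph{true} for strong cuts: if $(\mu_1\mu_2)(x)>\alpha$, then since $(\mu_1\mu_2)(x)=\Sup\{\mu_1(y)\wedge\mu_2(z)\mid yz=x\}>\alpha$, there \emph{exists} a decomposition $x=yz$ with $\mu_1(y)\wedge\mu_2(z)>\alpha$ (the supremum being strictly above $\alpha$ forces some term strictly above $\alpha$), hence $y\in\widetilde{\sigma}(\mu_1)(\alpha)$ and $z\in\widetilde{\sigma}(\mu_2)(\alpha)$ and $x=yz$ lies in the product. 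The $\alpha=1$ component is handled by the direct computation $\{x\mid(\mu_1\mu_2)^1(x)=1\}=\{x\mid\mu_1^1(x)=1\}\cdot\{x\mid\mu_2^1(x)=1\}$, using $\mu^1(e)=1$. The hard part is precisely this use of the strict inequality in the supremum to extract a witnessing decomposition; it is what makes strong cuts the right tool and is the whole reason the passage from $\nu$ to $\widetilde{\nu}$ fixes the failure of homomorphy exhibited in the example.
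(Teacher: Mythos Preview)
Your argument follows essentially the same route as the paper's. The main difference is one of packaging: for (1) the paper simply observes $\widetilde{\sigma}(\mu)=\sigma(\mu)^d$ and invokes Lemma~\ref{le:20180423} to transfer property~(F) of $\sigma(\mu)$ to property~(inf--F) of its $d$-interior, whereas you verify the subgroup, strict-decreasing, and (inf--F) conditions directly; for (2) the paper cites Lemma~\ref{le:20180423b}, while you give the direct inequality $\widetilde{\mu}(\sigma)(xy^{-1})\geq\widetilde{\mu}(\sigma)(x)\wedge\widetilde{\mu}(\sigma)(y)$; for (3) you invoke the set-level Theorem~\ref{th:20180417} and the paper mimics Theorem~\ref{th:201804b}, which amount to the same computation. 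For (4) your argument and the paper's coincide: the key point---that a supremum strictly above $\alpha$ forces a witnessing decomposition $x=yz$ with $\mu_1(y)\wedge\mu_2(z)>\alpha$---is exactly the equality the paper writes down in one line, and you spell it out.

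One caveat on (4): your sentence ``The $\alpha=1$ component is handled by the direct computation $\{x\mid(\mu_1\mu_2)^1(x)=1\}=\{x\mid\mu_1^1(x)=1\}\cdot\{x\mid\mu_2^1(x)=1\}$'' is not actually justified by the argument you give. When the supremum $\Sup\{\mu_1(y)\wedge\mu_2(z)\mid yz=x\}$ equals $1$, there need not exist a decomposition with $\mu_1(y)=\mu_2(z)=1$; the ``strictly above $\alpha$ forces a witness'' trick breaks down at the top. The paper's proof does not treat the case $\alpha=1$ separately either, so you are not worse off than the original, but your explicit claim for $\alpha=1$ is stronger than what your reasoning supports.
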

\begin{proof}
(1). %
First we observe that $\widetilde{\sigma}(\mu)=\sigma(\mu)^d$, hence it is a strict gradual subgroup, and by Lemma~\eqref{le:20180423} it satisfies property (inf-F). It is well defined as $\sigma(\mu)$ is uniquely defined, hence it is $\widetilde{\sigma}(\mu)$.
\par (2). %
It is a direct consequence of Lemma~\eqref{le:20180423b}.
\par (3). %
We can mimic the proof of Theorem~\eqref{th:201804b}.
\par (4). %
For any $\mu_1,\mu_2$ we have:
\[
\begin{array}{ll}
(\sigma(\mu_1)\,\sigma(\mu_2))(\alpha)
&=\sigma(\mu_1)(x)\,\sigma(\mu_2)(\alpha)
=\{x\mid\;\mu_1(x)>\alpha\}\,\{x\mid\;\mu_2(x)>\alpha\}\\
&=\{x\mid\;\Sup\{\mu_1(y)\wedge\mu_2(z)\mid\;y\,z=x\}>\alpha\}
=\{x\mid\;(\mu_1\,\mu_2)(x)>\alpha\}\\
&=\sigma(\mu_1\,\mu_2)(x).
\end{array}
\]
\end{proof}

\subsection{Normal {fuzzy} subgroups}

A fuzzy subgroup $\mu$ of a group $G$ is \textbf{normal} if $\mu\,\eta=\eta\,\mu$ for any fuzzy subset $\eta$, or equivalently if $\mu(x\,y)=\mu(y\,x)$ for any $x,y\in{G}$, see \cite{MORDESON/MALIK:1998}. We are interesting in relating normal fuzzy subgroups and normal gradual subgroups. We have defined a gradual subgroup $\sigma$ to be \textbf{normal} if $\sigma(\alpha)\subseteq{G}$ is a normal subgroup for any $\alpha\in(0,1]$.

\begin{lemma}
Let $\mu_1,\mu_2$ be a fuzzy subgroup such that $\mu_1\sim\mu_2$ and $\mu_1$ is normal, then $\mu_2$ is normal.
\end{lemma}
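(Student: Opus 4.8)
The plan is to unwind the definition of normality for fuzzy subgroups and exploit the fact that $\mu_1$ and $\mu_2$ differ only at the neutral element $e$. Recall that $\mu_1 \sim \mu_2$ means $\mu_1(x) = \mu_2(x)$ for all $x \neq e$, and that $\mu_1$ normal means $\mu_1(xy) = \mu_1(yx)$ for all $x,y \in G$. I want to conclude $\mu_2(xy) = \mu_2(yx)$ for all $x,y \in G$.

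First I would fix arbitrary $x,y \in G$ and distinguish two cases according to whether $xy = e$ or not. If $xy \neq e$, then also $yx \neq e$ (since $xy = e$ is equivalent to $y = x^{-1}$, which is equivalent to $yx = e$); hence both $xy$ and $yx$ lie in $G \setminus \{e\}$, so $\mu_2(xy) = \mu_1(xy) = \mu_1(yx) = \mu_2(yx)$, using $\mu_1 \sim \mu_2$ on the outer equalities and normality of $\mu_1$ in the middle. If $xy = e$, then $yx = e$ as well, so both sides equal $\mu_2(e)$ trivially and the identity holds without any appeal to $\mu_1$. In both cases $\mu_2(xy) = \mu_2(yx)$, so $\mu_2$ is normal.

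There is no real obstacle here; the only point requiring a word of care is the elementary observation that $xy = e \iff yx = e$ in a group, which guarantees that the equivalence relation $\sim$ (which tampers only with the value at $e$) cannot break the conjugation-invariance encoded in $\mu_1(xy)=\mu_1(yx)$. One could alternatively phrase the argument via the characterization $\mu$ normal $\iff \mu(gxg^{-1}) = \mu(x)$ for all $g,x$, but since $gxg^{-1} = e \iff x = e$, the same two-case split applies verbatim, so I would keep the direct computation above as it is shortest.
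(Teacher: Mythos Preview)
Your proof is correct and follows essentially the same approach as the paper: both split into the cases $xy \neq e$ and $xy = e$, use the fact that $xy = e \iff yx = e$, and then transfer the equality $\mu_1(xy)=\mu_1(yx)$ to $\mu_2$ via the equivalence relation on the non-identity elements.
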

\begin{proof}
By hypothesis $\mu_1(x\,y)=\mu_1(y\,x)$ for every $x,y\in{G}$, if $x\,y,y\,x\neq{e}$, then $\mu_2(x\,y)=\mu_2(y\,x)$. If $x\,y=e$, then $x=y^{-1}$, hence $y\,x=e$, and we have $\mu_2(x\,y)=\mu_2(y\,x)$
\end{proof}

As a consequence, if $\mu$ is a normal fuzzy subgroup, then every fuzzy subgroup in $[\mu]$ is normal; in particular $\mu^1$ is normal.

\begin{theorem}
Let $\mu$ be a fuzzy subgroup, the following statements are equivalent:
\begin{enumerate}[(a)]\sepa
\item
$\mu$ is normal.
\item
$\widetilde{\sigma}(\mu)$ is normal.
\end{enumerate}
\end{theorem}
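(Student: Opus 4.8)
The plan is to reduce the equivalence to the canonical representative $\mu^1$ and then transport the conjugation action of $G$ through the level sets that define $\widetilde\sigma(\mu)$, using the inversion formula $\widetilde\mu(\widetilde\sigma(\mu))=\mu^1$ supplied by Theorem~\eqref{th:201804}(3). By the lemma preceding the statement (and the remark that every member of $[\mu]$ is normal once one is), $\mu$ is normal if and only if $\mu^1$ is normal; and normality of a fuzzy subgroup is equivalent to $\mu^1(gxg^{-1})=\mu^1(x)$ for all $g,x\in G$. So it suffices to show that this conjugation-invariance of $\mu^1$ holds if and only if every subgroup $\widetilde\sigma(\mu)(\alpha)$ is normal in $G$ (recall that each $\widetilde\sigma(\mu)(\alpha)$ is already a subgroup, since $\widetilde\sigma(\mu)=\sigma(\mu)^d$ is a gradual subgroup and Proposition~\eqref{pr:20181112} applies).

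For (a)$\Rightarrow$(b): assuming $\mu^1(gxg^{-1})=\mu^1(x)$ for all $g,x$, the predicate $\mu^1(x)>\alpha$ (for $\alpha\in(0,1)$) and the predicate $\mu^1(x)=1$ (for $\alpha=1$) are each invariant under $x\mapsto gxg^{-1}$; hence every $\widetilde\sigma(\mu)(\alpha)=\{x\mid\mu^1(x)>\alpha\}$ (resp. $\{x\mid\mu^1(x)=1\}$) is conjugation-stable, i.e. normal. For (b)$\Rightarrow$(a): fix $g\in G$; conjugation by $g$ is a bijection of $G$ that by hypothesis carries each $\widetilde\sigma(\mu)(\alpha)$ onto itself, hence, for each fixed $x$, it preserves the set $\{\alpha\in(0,1]\mid x\notin\widetilde\sigma(\mu)(\alpha)\}$ and therefore its infimum. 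Since $\widetilde\mu(\widetilde\sigma(\mu))(x)=\mu^1(x)$ by Theorem~\eqref{th:201804}(3) for $x\neq e$ (the case $x=e$ being vacuous, as $gxg^{-1}=e$), this gives $\mu^1(gxg^{-1})=\mu^1(x)$, so $\mu^1$, and hence $\mu$, is normal.

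The only delicate bookkeeping is matching the two-case definitions of $\widetilde\sigma(\mu)$ (at $\alpha=1$) and of $\widetilde\mu$ (at $x=e$), together with the convention that the infimum of the empty subset of $(0,1]$ is $1$; but these are precisely the conventions already fixed in the construction underlying Theorem~\eqref{th:201804}, so no new argument is needed. I do not expect any genuine obstacle here: the substantive content — that $\widetilde\sigma(\mu)$ and $\mu^1$ determine each other — was already established, and normality is simply the assertion that this correspondence intertwines the (trivial) conjugation action on $\mu^1$ with the conjugation action on each level set.
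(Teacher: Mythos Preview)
Your argument is correct, but it proceeds along a different line from the paper's proof. You work directly with the elementwise characterization of normality, $\mu^1(gxg^{-1})=\mu^1(x)$, and transport it across the level-set/infimum description of $\widetilde\sigma$ and $\widetilde\mu$, using the bijection of Theorem~\ref{th:201804}(3) for the return direction. The paper instead exploits the \emph{product} structure: it introduces the Dirac fuzzy subset $\eta(g)=\chi_{\{g\}}$, computes $(\eta(g)\,\mu)(x)=\mu(g^{-1}x)$ and $(\mu\,\eta(g))(x)=\mu(xg^{-1})$, and then uses the multiplicativity of $\widetilde\sigma$ (Theorem~\ref{th:201804}(4)) to get $\widetilde\sigma(\mu)=\widetilde\sigma(\eta(g^{-1})\,\mu\,\eta(g))=g^{-1}\,\widetilde\sigma(\mu)\,g$, which gives both implications at once. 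Your route is more elementary and self-contained, needing only the definitions of $\widetilde\sigma$ and $\widetilde\mu$ and the fact that they are mutually inverse; the paper's route is shorter once the product machinery is in place and has the virtue of showing that normality transfers precisely because $\widetilde\sigma$ respects the convolution product of fuzzy subsets.
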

\begin{proof}
We may assume, without loss of generality, that $\mu=\mu^1$. Let $g\in{G}$ and let us consider the fuzzy subset $\eta(g)$ defined as the characteristic function of $\{g\}$, then
$$
(\eta(g)\,\mu)(x)
=\Sup\{\eta(g)(x_1)\wedge\mu(x_2)\mid\;x=x_1\,x_2\}
=\eta(g)(g)\wedge\mu(g^{-1}\,x)=\mu(g^{-1}\,x),
$$
and in the same way $(\mu\,\eta(g))(x)=\mu(x\,g^{-1})$. Then
$$
\widetilde{\sigma}(\mu)
=\widetilde{\sigma}(\eta(g^{-1})\,\mu\,\eta(g))
=\widetilde{\sigma}(\eta(g^{-1}))\,\widetilde{\sigma}(\mu)\,\widetilde{\sigma}(\eta(g))
=g^{-1}\,\widetilde{\sigma}(\mu)\,g.
$$
Therefore, $\widetilde{\sigma}(\mu)$ is normal. Conversely, if $\widetilde{\sigma}(\mu)$ is normal, for any element $g\in{G}$ we have $g^{-1}\,\widetilde{\sigma}(\mu)\,g=\widetilde{\sigma}(\mu)$, hence $\eta(g^{-1})\,\mu\,\eta(g)=\mu$, and $\mu$ is a normal fuzzy subgroup.
\end{proof}

\subsection{Gradual groups}

From any decreasing gradual subgroup $\sigma$  of a group $G$ we have two families: one $\{\sigma(\alpha)\mid\;\alpha\in(0,1]\}$ is a family of groups and the other $\{i_{\alpha,\beta}:\sigma(\beta)\hookrightarrow\sigma(\alpha)\mid\;\alpha\leq\beta\}$ is the family of the inclusions. To include these objects inside a more general theory, we shall consider contravariant functors from $(0,1]$ to $\Gr$, the category of groups.

For any contravariant functor $F:(0,1]\longrightarrow\Gr$ and every $\alpha\leq\beta$ we have now a group homomorphism from $F(\beta)$ to $F(\alpha)$, and the pair $(\{F(\alpha)\mid\;\alpha\in(0,1]\},\{F(f_{\alpha,\beta})\mid\;\alpha\leq\beta\})$ is a direct systems of groups and group homomorphisms, hence there exists its direct limit, say $\dlim{F}$.

We define a \textbf{gradual group} as a contravariant functor $F:(0,1]\longrightarrow\Gr$, and a gradual group homomorphisms from $F_1$ to $F_2$ is just a natural transformation from $F_1$ to $F_2$. Therefore, we can consider the category of gradual groups and gradual group homomorphisms, which we denote by $\mathcal{G}$.

An example of such a gradual group is provided by any decreasing gradual subgroup $\sigma$ of a group $G$. In this case, the direct limit $\dlim\sigma$ is isomorphic to a subgroup of $G$; indeed, it is the union $\cup\{\sigma(\alpha)\mid\;\alpha\in(0,1]\}$.

Following this example, for any arbitrary gradual group $F$, we say $F$ is a \textbf{decreasing gradual group} whenever each $F(f_{\alpha,\beta})$, for $\alpha\leq\beta$. The class of all decreasing gradual groups is denoted by $\mathcal{J}$. To well understand the structure of decreasing gradual groups, we build an operator (an endofunctor) $d$ in $\mathcal{J}$; defined on objects as follows: for any $F\in\mathcal{J}$ we define $F^d(\alpha)=\dlim_{(\alpha,1]}F(\gamma)$, for every $\alpha\in(0,1]$. We collect these results in the following proposition, whose proof, after the theory developed in section~\eqref{se:20181115}, is straightforward.

{\begin{proposition}
Let $F$ be a decreasing gradual group, and $\theta:F_1\longrightarrow{F_2}$ be a decreasing gradual map. The following statements hold.
\begin{enumerate}[(1)]\sepa
\item
$F^d$ is a decreasing gradual group.
\item
$d$ is an endfunctor of the full subcategory $\mathcal{J}$ of $\mathcal{G}$.
\item
$d$ is an interior operator in $\mathcal{J}$.
\end{enumerate}
\end{proposition}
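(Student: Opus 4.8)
The plan is to reduce all three assertions to the concrete description already obtained in section~\eqref{se:20181115}, using that the forgetful functor $\Gr\longrightarrow\Set$ preserves filtered colimits and reflects monomorphisms, so that a decreasing gradual group can be handled on underlying sets exactly as a decreasing gradual set.

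For~(1), fix $F\in\mathcal{J}$ and set $D=\dlim_{(0,1]}F$, computed in $\Gr$. Since $F$ is decreasing every transition homomorphism $F(f_{\gamma,\delta})$ is injective, hence so is each structure map $F(\gamma)\longrightarrow D$; identifying $F(\gamma)$ with its image, a subgroup of $D$, one sees that $F^d(\alpha)=\dlim_{(\alpha,1]}F(\gamma)$ is the directed union of subgroups $\bigcup\{F(\gamma)\mid\gamma>\alpha\}$ of $D$, itself a subgroup, where for $\alpha=1$ we adopt the convention $F^d(1)=F(1)$ in line with the definition of the strict accumulation of a gradual subgroup. For $\alpha\le\beta$ the inclusion of index categories $(\beta,1]\hookrightarrow(\alpha,1]$ induces the comparison morphism $F^d(\beta)\longrightarrow F^d(\alpha)$, which under this identification is just the inclusion $\bigcup\{F(\gamma)\mid\gamma>\beta\}\subseteq\bigcup\{F(\gamma)\mid\gamma>\alpha\}$; functoriality in $\alpha$ is immediate and the comparison maps are injective, so $F^d$ is a contravariant functor $(0,1]\longrightarrow\Gr$ with injective transition maps, that is, an object of $\mathcal{J}$.

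For~(2), given a decreasing gradual map $\theta\colon F_1\longrightarrow F_2$ I would let $\theta^d(\alpha)\colon F_1^d(\alpha)\longrightarrow F_2^d(\alpha)$ be the morphism on $\dlim_{(\alpha,1]}$ induced by the restriction of $\theta$ to the subsystem indexed by $(\alpha,1]$; equivalently, it is the restriction to the subgroup $F_1^d(\alpha)$ of the induced map $\dlim_{(0,1]}F_1\longrightarrow\dlim_{(0,1]}F_2$. Naturality of $\theta^d$ in $\alpha$ and the functor identities $(\id_F)^d=\id_{F^d}$ and $(\psi\theta)^d=\psi^d\,\theta^d$ (for composable $\theta$, $\psi$) follow from the universal property of the colimit exactly as in the gradual-set case; together with~(1) this shows that $d$ is an endofunctor of the full subcategory $\mathcal{J}$ of $\mathcal{G}$.

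For~(3), I would verify the three defining properties of an interior operator with respect to subfunctor inclusion. Containment $F^d\subseteq F$: the cocone $\{F(\gamma)\longrightarrow F(\alpha)\mid\gamma>\alpha\}$ of structure maps of $F$ induces a morphism $F^d(\alpha)\longrightarrow F(\alpha)$, which under the identification above is the inclusion $\bigcup\{F(\gamma)\mid\gamma>\alpha\}\subseteq F(\alpha)$ (legitimate since $F$ is decreasing), natural in $\alpha$. Monotonicity: $F_1\subseteq F_2$ forces $F_1^d(\alpha)=\bigcup\{F_1(\gamma)\mid\gamma>\alpha\}\subseteq\bigcup\{F_2(\gamma)\mid\gamma>\alpha\}=F_2^d(\alpha)$. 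Idempotence: $F^{dd}(\alpha)=\bigcup\{F^d(\gamma)\mid\gamma>\alpha\}=\bigcup_{\gamma>\alpha}\bigcup_{\delta>\gamma}F(\delta)=\bigcup\{F(\delta)\mid\delta>\alpha\}=F^d(\alpha)$, the crucial middle equality holding because every $\delta>\alpha$ admits some $\gamma$ with $\alpha<\gamma<\delta$, which is exactly the computation $\sigma^{dd}=\sigma^d$ of section~\eqref{se:20181115}. The only point that is not pure bookkeeping is the one settled at the start, namely that the comparison maps $F^d(\beta)\longrightarrow F^d(\alpha)$ remain injective so that $F^d$ genuinely lies in $\mathcal{J}$; this is where one uses that filtered colimits in $\Gr$ are computed on underlying sets, after which everything transcribes the $\Set$-level arguments, with a little extra care only at the empty index category $(1,1]$, handled by the convention $F^d(1)=F(1)$.
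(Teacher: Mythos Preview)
Your proof is correct and follows the approach the paper intends: the paper itself offers no argument beyond declaring the result ``straightforward'' after section~\eqref{se:20181115}, and your reduction to the set-level computations there---via the fact that filtered colimits in $\Gr$ are computed on underlying sets---is exactly the transcription the paper has in mind. The one point worth flagging is your convention $F^d(1)=F(1)$: the paper's definition of $F^d$ for gradual groups is literally $F^d(\alpha)=\dlim_{(\alpha,1]}F(\gamma)$ for all $\alpha$, which at $\alpha=1$ gives the trivial group rather than $F(1)$; this is an internal inconsistency in the paper (the subset-level $\sigma^d$ does special-case $\alpha=1$, the functor-level $F^d$ does not), and either reading makes the proposition go through, so your choice is harmless.
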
}

A \textbf{strict decreasing gradual group} is a decreasing gradual group $F$ such that $F=F^d$.

At this point it is convenient to remark that we have gradual groups and gradual subgroups. Contrary to decreasing gradual subgroups, that need of an ambient or a ground group, decreasing gradual groups have it included: it is the direct limit of the direct system that the gradual group defines. This situation allows us to formulate a more attractive category theory of gradual objects which includes the usual constructions of the category of groups. In this context, decreasing gradual groups, strict decreasing gradual groups and fuzzy groups can be identified with adequate subcategories, see the forthcoming paper \cite{Garcia/Jara:2018b}.

\subsection*{Conclusion}

Our aim in this paper has been to introduce more general notions that fuzzy subset in order to find a framework in which develop an easier theory that allows new techniques to prove and establish new results in fuzzy theory. In this sense we start from the concept of gradual element with the goal of introducing gradual subsets. At this point we establish a bijective correspondence between fuzzy subsets and a particular kind of gradual subsets (strictly decreasing gradual subsets), that satisfies property (inf--F). The more interesting property of this correspondence is that it preserves arbitrary unions and intersections of fuzzy subsets.

In a second degree of abstraction we consider a gradual subset as a contravariant functor from the category $(0,1]$ to the category of sets, which allows us to define the notion of fuzzy and gradual sets without the use of an ambient set. Thus we have three degrees of abstraction, the first one corresponds to fuzzy subset; the second one to gradual subsets, identifying fuzzy subsets as some particular gradual subsets; and the third one to contravariant functors from $(0,1]$ to the category of sets, or directed systems of sets, identifying decreasing gradual subsets as those systems with injective maps. Observe that in each abstraction level we have the objects studied in the previous one. We also establish the corresponding theory for groups in two different but compatible ways; (1) defining contravariant functors to the category of groups; $\mathcal{G}r^{(0,1]}$, and (2) defining groups in the functor category $\mathcal{S}et^{(0,1]}$.

One of the goals of this paper was to find a framework in which to study together the two crisp sets associated with each fuzzy set, and we have proven that groups and gradual groups allow it to do so. On the other hand, the use of category theory tools will allow to extend this working method to other structures, of which the sets and groups studied are only an example.


\end{document}